\newtheorem{eg}{Example}
\newtheorem{thm}{Theorem}
\newtheorem{prop}[thm]{Proposition}
\newtheorem{lemma}[thm]{Lemma}
\newtheorem{cor}[thm]{Corollary}
\DeclareMathOperator*{\tr}{tr}
\DeclareMathOperator*{\Var}{Var}
\DeclareMathOperator*{\Cov}{Cov}
\newcommand{\vertiii}[1]{{\left\vert\kern-0.25ex\left\vert\kern-0.25ex\left\vert #1 
    \right\vert\kern-0.25ex\right\vert\kern-0.25ex\right\vert}}
\begin{document}

\begin{frontmatter}

\title{Efficient multivariate entropy estimation via $k$-nearest neighbour distances}
\runtitle{Efficient entropy estimation}
\begin{aug}
\author{\fnms{Thomas} B. \snm{Berrett}\thanksref{t1,m1}\ead[label=e2]{t.berrett@statslab.cam.ac.uk}},
\author{\fnms{Richard} J. \snm{Samworth}\thanksref{t2,m1}\ead[label=e1]{r.samworth@statslab.cam.ac.uk}}
\and
\author{\fnms{Ming} \snm{Yuan}\thanksref{t3,m2}\ead[label=e3]{myuan@stat.wisc.edu}}
\ead[label=u1,url]{http://www.statslab.cam.ac.uk/\~{}rjs57}
\ead[label=u2,url]{http://www.statslab.cam.ac.uk/\~{}tbb26}
\ead[label=u3,url]{http://pages.stat.wisc.edu/\~{}myuan/}
\thankstext{t1}{Research supported by a Ph.D. scholarship from the SIMS fund.}
\thankstext{t2}{Research supported by an EPSRC Early Career Fellowship and a grant from the Leverhulme Trust.}
\thankstext{t3}{Research supported by NSF FRG Grant DMS-1265202 and NIH Grant 1-U54AI117924-01.}
\runauthor{T. B. Berrett, R. J. Samworth and M. Yuan}
\affiliation{University of Cambridge\thanksmark{m1}}
\affiliation{University of Wisconsin--Madison\thanksmark{m2}}

\address{Statistical Laboratory \\ Wilberforce Road \\ Cambridge \\ CB3 0WB \\ United Kingdom\\ 
          \printead{e1}\\\printead{e2}\\ \printead{u1}\\\printead{u2}}
\address{Department of Statistics \\University of Wisconsin--Madison \\ Medical Sciences Center \\ 1300 University Avenue \\ Madison, WI 53706 \\ United States of America \\ \printead{e3} \\ \printead{u3}}

\end{aug}

\begin{abstract}
Many statistical procedures, including goodness-of-fit tests and methods for independent component analysis, rely critically on the estimation of the entropy of a distribution.  In this paper, we seek entropy estimators that are efficient and achieve the local asymptotic minimax lower bound with respect to squared error loss.  To this end, we study weighted averages of the estimators originally proposed by \citet{Kozachenko:87}, based on the $k$-nearest neighbour distances of a sample of $n$ independent and identically distributed random vectors in $\mathbb{R}^d$.  A careful choice of weights enables us to obtain an efficient estimator in arbitrary dimensions, given sufficient smoothness, while the original unweighted estimator is typically only efficient when $d \leq 3$.  In addition to the new estimator proposed and theoretical understanding provided, our results facilitate the construction of asymptotically valid confidence intervals for the entropy of asymptotically minimal width.
\end{abstract}

\begin{keyword}[class=AMS]
\kwd{62G05, 62G20}
\end{keyword}

\begin{keyword}
\kwd{efficiency}
\kwd{entropy estimation}
\kwd{Kozachenko--Leonenko estimator}
\kwd{weighted nearest neighbours}
\end{keyword}

\end{frontmatter}






\section{Introduction}

The concept of entropy plays a central role in information theory, and has found a wide array of uses in other disciplines, including statistics, probability and combinatorics.  The \emph{(differential) entropy} of a random vector $X$ with density function $f$ is defined as
\[
	H = H(X)= H(f):= -\mathbb{E}\{\log f(X)\} = -\int_{\mathcal{X}} f(x) \log f(x) \, dx 
\]
where $\mathcal{X} := \{ x : f(x) >0 \}$. It represents the average information content of an observation, and is usually thought of as a measure of unpredictability.  

In statistical contexts, it is often the estimation of entropy that is of primary interest, for instance in goodness-of-fit tests of normality \citep{Vasicek:76} or uniformity \citep{Cressie:76}, tests of independence \citep{Goria:05}, independent component analysis \citep{Miller:03} and feature selection in classification \citep{Kwak:02}.  See, for example, \citet{Beirlant:97} and \citet{Paninski2003} for other applications and an overview of nonparametric techniques, which include methods based on sample spacings in the univariate case \citep[e.g.][]{HG2009}, histograms \citep{Hall:93} and kernel density estimates \citep{Paninski:08,Hero:13}, among others.  The estimator of \citet{Kozachenko:87} is particularly attractive as a starting point, both because it generalises easily to multivariate cases, and because, since it only relies on the evaluation of $k$th-nearest neighbour distances, it is straightforward to compute.

To introduce this estimator, for $n \geq 2$, let $X_1,\ldots,X_n$ be independent random vectors with density $f$ on $\mathbb{R}^d$.  Write $\|\cdot\|$ for the Euclidean norm on $\mathbb{R}^d$, and for $i=1,\dots,n$, let $X_{(1),i},\ldots,X_{(n-1),i}$ denote a permutation of $\{X_1,\ldots,X_n\} \setminus \{X_i\}$ such that $\|X_{(1),i} - X_i\| \leq \ldots \leq \|X_{(n-1),i} - X_i\|$.  For conciseness, we let
\[
\rho_{(k),i} := \|X_{(k),i} - X_i\|
\]
denote the distance between $X_i$ and the $k$th nearest neighbour of $X_i$.  The Kozachenko--Leonenko estimator of the entropy $H$ is given by
\begin{equation}
\label{Eq:KLEstimator}
\hat{H}_n = \hat{H}_n(X_1,\ldots,X_n) := \frac{1}{n}\sum_{i=1}^n \log \biggl(\frac{\rho_{(k),i}^d V_d (n-1)}{e^{\Psi(k)}}\biggr),
\end{equation}
where $V_d := \pi^{d/2}/\Gamma(1 + d/2)$ denotes the volume of the unit $d$-dimensional Euclidean ball and where $\Psi$ denotes the digamma function. In fact, this is a generalisation of the estimator originally proposed by \citet{Kozachenko:87}, which was defined for $k=1$. For integers $k$ we have $\Psi(k) = -\gamma + \sum_{j=1}^{k-1} 1/j$ where $\gamma := 0.577216\ldots$ is the Euler--Mascheroni constant, so that $e^{\Psi(k)}/k \rightarrow 1$ as $k \rightarrow \infty$. This estimator can be regarded as an attempt to mimic the `oracle' estimator $H_n^* := -n^{-1}\sum_{i=1}^n \log f(X_i)$, based on a $k$-nearest neighbour density estimate that relies on the approximation
\[
	\frac{k}{n-1} \approx V_d \rho_{(k),1}^d f(X_1).
\]
It turns out that, when $d \leq 3$ and other regularity conditions hold, the estimator $\hat{H}_n$ in~\eqref{Eq:KLEstimator} has the same asymptotic behaviour as $H_n^*$, in that
\[
n^{1/2}(\hat{H}_n  - H) \stackrel{d}{\rightarrow} N\bigl(0,\mathrm{Var} \log f(X_1)\bigr).
\]
We will see that in such settings, this estimator is asymptotically efficient, in the sense of, e.g., \citet[][p.~367]{vanderVaart1998}.  However, when $d \geq 4$, a non-trivial bias typically precludes its efficiency.  Our main object of interest, therefore, will be a generalisation of the estimator~\eqref{Eq:KLEstimator}, formed as a weighted average of Kozachenko--Leonenko estimators for different values of $k$, where the weights are chosen to try to cancel the dominant bias terms.  More precisely, for a weight vector $w = (w_1,\ldots,w_k)^T \in \mathbb{R}^k$ with $\sum_{j=1}^k w_j = 1$, we consider the estimator
\[
\hat{H}_n^w := \frac{1}{n} \sum_{i=1}^n \sum_{j=1}^k w_j \log \xi_{(j),i},
\]
where $\xi_{(j),i}:=e^{-\Psi(j)} V_d(n-1) \rho_{(j),i}^d$.  Weighted estimators of this general type have been considered recently \citep[e.g.][]{Hero:13,MSGH2016}, though our construction of the weights and our analysis is new.  In particular, we show that under stronger smoothness assumptions, and with a suitable choice of weights, the weighted Kozachenko--Leonenko estimator is efficient in arbitrary dimensions.


There have been several previous studies of the (unweighted) Kozachenko--Leonenko estimator, but results on the rate of convergence have until now confined either to the case $k=1$ or (very recently) the case where $k$ is fixed as $n$ diverges.  The original \citet{Kozachenko:87} paper proved consistency of the estimator under mild conditions in the case $k=1$.  \citet{Tsybakov:96} proved that the mean squared error of a truncated version of the estimator is $O(n^{-1})$ when $k=1$ and $d=1$ under a condition that is almost equivalent to an exponential tail; \citet{BiauDevroye2015} showed that the bias vanishes asymptotically while the variance is $O(n^{-1})$ when $k=1$ and $f$ is compactly supported and bounded away from zero on its support.  Very recently, in independent work and under regularity conditions, \citet{DelattreFournier:16} derived the asymptotic normality of the estimator when $k=1$, confirming the suboptimal asymptotic variance in this case.  
Previous works on the general $k$ case include \citet{Singh:03}, where heuristic arguments were presented to suggest the estimator is consistent for general $d$ and general fixed $k$ and has variance $O(n^{-1})$ for $d=1$ and general fixed $k$.  \citet{GOV2016} obtain a mean squared error bound of $O(n^{-1})$ up to polylogarithmic factors for fixed $k$ and $d \leq 2$, though the only densities which the authors can show satisfy their tail condition have bounded support.  \citet{SinghPoczos2016} obtain a similar bound (without the polylogarithmic factors, but explicitly assuming bounded support) for fixed $k$ and $d \leq 4$.  \citet{Mnatsakanov:08} allow $k$ to diverge with $n$, and show that the estimator is consistent for general $d$.  

Plug-in kernel methods are also popular for entropy estimation.  \citet{Paninski:08}, for example, show that a smaller bandwidth than would be required for a consistent density estimator can still yield a consistent entropy estimator.  A $k$-nearest neighbour density estimate can be regarded as a kernel estimator with a bandwidth that depends both on the data and on the point at which the estimate is required.  \citet{Hero:13} obtain the parametric rate of convergence for a plug-in kernel method, assuming bounded support and at least $d$ derivatives in the interior of the support.


Importantly, the class of densities considered in our results allows the support of the density to be unbounded; for instance, it may be the whole of $\mathbb{R}^d$.  Such settings present significant new challenges and lead to different behaviour compared with more commonly-studied situations where the underlying density is compactly supported and bounded away from zero on its support.  To gain intuition, consider the following second-order Taylor expansion of $H(f)$ around a density estimator $\hat{f}$:
\[
H(f) \approx - \int_{\mathbb{R}^d} f(x) \log \hat{f}(x) \, dx - \frac{1}{2}\biggl(\int_{\mathbb{R}^d} \frac{f^2(x)}{\hat{f}(x)} \, dx - 1\biggr).
\]
When $f$ is bounded away from zero on its support, one can estimate the (smaller order) second term on the right-hand side, thereby obtaining efficient estimators of entropy in higher dimensions \citep[][]{Laurent1996}; however, when $f$ is not bounded away from zero on its support such procedures are no longer effective.  To the best of our knowledge, therefore, this is the first time that a nonparametric entropy estimator has been shown to be efficient in multivariate settings for densities having unbounded support.  (We remark that when $d=1$, the histogram estimator of \citet{Hall:93} is known to be efficient under fairly strong tail conditions.)  

The outline of the rest of the paper is as follows.  In Section~\ref{Sec:Main}, we give our main results on the mean squared error and asymptotic normality of weighted Kozachenko--Leonenko estimators, and discuss confidence interval construction.  These main results arise from asymptotic expansions for the bias and variance, which are stated in Section~\ref{Sec:BiasVariance}.  Here, we also give examples to illustrate densities satisfying our conditions, discuss how they may be weakened, and address the fixed $k$ case.  Corresponding lower bounds are presented in Section~\ref{Sec:LowerBound}.  Proofs of main results are presented in Section~\ref{sec:proof} with auxiliary material and detailed bounds for various error terms deferred to the Appendix, which appears as the supplementary material \citet{BSY2017}.


We conclude the introduction with some notation used throughout the paper.  For $x \in \mathbb{R}^d$ and $r > 0$, let $B_x(r)$ be the closed Euclidean ball of radius $r$ about $x$, and let $B_x^\circ(r) := B_x(r) \setminus \{x\}$ denote the corresponding punctured ball.  We write $\|A\|_{\mathrm{op}}$ and $|A|$ for the operator norm and determinant, respectively, of $A \in \mathbb{R}^{d \times d}$, and let $\|A\|$ denote the vectorised Euclidean norm of a vector, matrix or array.  For a smooth function $f:\mathbb{R}^d \rightarrow [0,\infty)$, we write $\dot{f}(x),\ddot{f}(x)$ and $f^{(m)}(x)$ respectively for the gradient vector of $f$ at $x$, Hessian matrix of $f$ at $x$ and the array with $(j_1,\ldots,j_m)$th entry $\frac{\partial^m f(x)}{\partial x_{j_1}\ldots \partial x_{j_m}}$.  We also write $\Delta f(x) := \sum_{j=1}^d \frac{\partial^2 f}{\partial x_j^2}(x)$ for its Laplacian, and $\|f\|_\infty := \sup_{x \in \mathbb{R}^d} f(x)$ for its uniform norm.  

\section{Main results}
\label{Sec:Main}

We begin by introducing the class of densities over which our results will hold.  Let $\mathcal{F}_d$ denote the class of all density functions with respect to Lebesgue measure on $\mathbb{R}^d$.  For $f \in \mathcal{F}_d$ and $\alpha > 0$, let
\[
\mu_\alpha(f) := \int_{\mathbb{R}^d} \|x\|^\alpha f(x) \, dx.
\]
Now let $\mathcal{A}$ denote the class of decreasing functions $a:(0,\infty) \rightarrow [1,\infty)$ satisfying $a(\delta) = o(\delta^{-\epsilon})$ as $\delta \searrow 0$, for every $\epsilon > 0$.  If $a \in \mathcal{A}$, $\beta > 0$ and $f \in \mathcal{F}_d$ is $m := \lceil \beta \rceil -1$-times differentiable and $x \in \mathcal{X}$, we define $r_a(x):=\{8d^{1/2}a(f(x))\}^{-1/(\beta \wedge 1)}$ and
\[
M_{f,a,\beta}(x) := \max \biggl\{ \max_{t=1,\ldots, m} \frac{\|f^{(t)}(x)\|}{f(x)} \, , \, \sup_{y \in B_x^\circ(r_a(x))} \frac{\|f^{(m)}(y)-f^{(m)}(x)\|}{f(x) \|y-x\|^{\beta- m}} \biggr\}.
\]
The quantity $M_{f,a,\beta}(x)$ measures the smoothness of derivatives of $f$ in neighbourhoods of $x$, relative to $f(x)$ itself.  Note that these neighbourhoods of $x$ are allowed to become smaller when $f(x)$ is small.  Finally, for $\Theta := (0,\infty)^4 \times \mathcal{A}$, and $\theta = (\alpha,\beta,\nu,\gamma,a) \in \Theta$, let
\[
\mathcal{F}_{d,\theta} := \biggl\{f \in \mathcal{F}_d: \mu_\alpha(f) \leq \nu, \|f\|_\infty \leq \gamma, \sup_{x:f(x) \geq \delta} M_{f,a,\beta}(x) \leq a(\delta) \ \forall \delta > 0\biggr\}.
\]         
We note here that Lemma~\ref{Lemma:15over7} in the online supplement can be used to derive a nestedness property of the classes with respect to the smoothness parameter, namely that if $\theta = (\alpha,\beta,\gamma,\nu,a) \in \Theta$, $\beta' \in (0,\beta)$ and $a'(\delta) = 15d^{\lceil \beta \rceil/2}a(\delta)$, then $\mathcal{F}_{d,\theta} \subseteq \mathcal{F}_{d,\theta'}$, where $\theta' = (\alpha,\beta',\gamma,\nu,a') \in \Theta$.  In Section~\ref{Sec:Assumptions} below, we discuss the requirements of the class $\mathcal{F}_{d,\theta}$ in greater detail, and give several examples, including Gaussian and multivariate-$t$ densities, which belong to $\mathcal{F}_{d,\theta}$ for suitable $\theta$.

We now introduce the class of weights $w = (w_1,\ldots,w_k)^T$ that we consider.  For $k \in \mathbb{N}$, let
\begin{align}
\label{Eq:Wk}
	\mathcal{W}^{(k)} := \biggl\{ w \in \mathbb{R}^k : &\sum_{j=1}^k w_j \frac{\Gamma(j+2 \ell/d)}{\Gamma(j)} =0 \quad \text{for} \, \, \ell=1, \ldots, \lfloor d/4 \rfloor \nonumber \\
	&\sum_{j=1}^k w_j= 1\, \text{and} \, \, w_j=0 \, \, \text{if}\, j \notin \{ \lfloor k/d \rfloor, \lfloor 2k/d \rfloor, \ldots, k\} \biggr\}.
\end{align}
Our main result below shows that for appropriately chosen weight vectors in $\mathcal{W}^{(k)}$, the normalised risk of the weighted Kozachenko--Leonenko estimator $\hat{H}_n^w$ converges in a uniform sense to that of the oracle estimator $H_n^* := -n^{-1}\sum_{i=1}^n \log f(X_i)$.  Theorem~\ref{Thm:LowerBound} in Section~\ref{Sec:LowerBound} shows that this limiting risk is optimal.   
\begin{thm}
\label{unifweightedclt}
Fix $d \in \mathbb{N}$ and $\theta = (\alpha,\beta,\nu,\gamma,a) \in \Theta$ with $\alpha>d$ and with $\beta>d/2$.  Let $k_0^*=k_{0,n}^*$ and $k_1^*=k_{1,n}^*$ denote any two deterministic sequences of positive integers with $k_0^* \leq k_1^*$, with $k_0^* / \log^5 n \rightarrow \infty$ and with $k_1^*=O(n^{\tau_1})$ and $k_1^* = o(n^{\tau_2})$, where
\[
\tau_1 < \min \biggl( \frac{2 \alpha}{5\alpha+3d} \, , \, \frac{\alpha-d}{2\alpha} \, , \, \frac{4\beta^*}{4\beta^*+3d}\biggr), \tau_2 := \min\biggl(1-\frac{d/4}{1+\lfloor d/4 \rfloor},1-\frac{d}{2\beta}\biggr)
\]
and $\beta^* := \beta \wedge 1$.  There exists $k_d \in \mathbb{N}$, depending only on $d$, such that for each $k \geq k_d$, we can find $w = w^{(k)} \in \mathcal{W}^{(k)}$ with $\sup_{k \geq k_d} \|w^{(k)}\| < \infty$.  For such $w$,
\begin{equation}
\label{Eq:Eff}
\sup_{k \in \{k_0^*,\ldots,k_1^*\}} \sup_{f \in \mathcal{F}_{d, \theta}} n\mathbb{E}_f\bigl\{(\hat{H}_n^w - H_n^*)^2\bigr\} \rightarrow 0
\end{equation}
as $n \rightarrow \infty$.  In particular,
\[
\sup_{k \in \{k_0^*,\ldots,k_1^*\}} \sup_{f \in \mathcal{F}_{d, \theta}} \bigl| n\mathbb{E}_f\{(\hat{H}_n^w - H(f))^2\} -V(f) \bigr| \rightarrow 0,
\]
where $V(f) := \mathrm{Var}_f \log f(X_1) = \int_{\mathcal{X}} f \log^2 f - H(f)^2$.
\end{thm}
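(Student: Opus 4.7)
The plan is to control the bias and variance of $\hat{H}_n^w - H_n^*$ separately and then derive the second conclusion via Cauchy--Schwarz. Writing
\[
n\mathbb{E}_f\bigl[(\hat{H}_n^w - H_n^*)^2\bigr] = n\mathrm{Var}_f(\hat{H}_n^w - H_n^*) + n\bigl[\mathbb{E}_f \hat{H}_n^w - H(f)\bigr]^2,
\]
it suffices to show that each summand is $o(1)$ uniformly in $f \in \mathcal{F}_{d,\theta}$ and $k \in \{k_0^*, \ldots, k_1^*\}$.

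For the squared bias, I would invoke an expansion (to be established in Section~\ref{Sec:BiasVariance}) of the schematic form
\[
\mathbb{E}_f \hat{H}_n^w - H(f) = \sum_{\ell=1}^{\lfloor d/4 \rfloor} C_\ell(f) \Bigl(\frac{k}{n-1}\Bigr)^{\!2\ell/d} \sum_{j=1}^k w_j \frac{\Gamma(j + 2\ell/d)}{\Gamma(j)} + R_{n,k}(f),
\]
where the $C_\ell(f)$ are integrals of $f$ and its derivatives which are uniformly bounded on $\mathcal{F}_{d,\theta}$. The linear constraints defining $\mathcal{W}^{(k)}$ are engineered precisely to annihilate every explicit term on the right. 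A remainder analysis exploiting $\beta > d/2$ (smoothness), $\alpha > d$ (tail decay) and the upper bound $k_1^* = o(n^{\tau_2})$ then shows $R_{n,k}(f) = o(n^{-1/2})$ uniformly. The existence of $w^{(k)} \in \mathcal{W}^{(k)}$ with uniformly bounded $\ell_2$-norm reduces to a linear-algebra calculation: restricted to the $d$ permitted indices $\{\lfloor k/d \rfloor, \ldots, k\}$, the constraint matrix has generalised Vandermonde structure in $j + 2\ell/d$, and rescaling $j$ by $k$ yields a non-singular limiting system whose solution bounds $\|w^{(k)}\|$ for $k \geq k_d$.

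For the variance term, I would decompose
\[
\hat{H}_n^w - H_n^* = \frac{1}{n}\sum_{i=1}^n \biggl\{\sum_{j=1}^k w_j \log \xi_{(j),i} + \log f(X_i)\biggr\}
\]
and appeal to the variance expansion of Section~\ref{Sec:BiasVariance}, which delivers $n \mathrm{Var}_f(\hat{H}_n^w - H_n^*) \to 0$ uniformly. The lower condition $k_0^*/\log^5 n \to \infty$ provides concentration of $\rho_{(j),i}^d$ around its expectation, while $k_1^* = O(n^{\tau_1})$ keeps the $k$-nearest neighbour balls small enough that cross-covariances between the summands contribute only at the second-order level of the associated Hoeffding decomposition. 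This step is the principal obstacle of the proof and requires a careful coupling with a Poisson point process approximation in order to control the long-range dependence between nearest-neighbour distances for points that lie in regions where $f$ is small.

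The second conclusion follows by expanding
\[
n\mathbb{E}_f\bigl[(\hat{H}_n^w - H(f))^2\bigr] = n\mathbb{E}_f\bigl[(\hat{H}_n^w - H_n^*)^2\bigr] + 2n\mathbb{E}_f\bigl[(\hat{H}_n^w - H_n^*)(H_n^* - H(f))\bigr] + V(f),
\]
since $\mathbb{E}_f H_n^* = H(f)$ and $n\mathrm{Var}_f(H_n^*) = V(f)$. The first term on the right vanishes uniformly by~\eqref{Eq:Eff}, and the cross term is bounded in absolute value by $2\sqrt{n\mathbb{E}_f[(\hat{H}_n^w - H_n^*)^2]}\sqrt{V(f)}$, which also vanishes uniformly provided $\sup_{f \in \mathcal{F}_{d,\theta}} V(f) < \infty$. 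This uniform bound on $V(f)$ follows from $\|f\|_\infty \leq \gamma$ (which controls $\int_{f \geq 1} f \log^2 f$) together with a H\"older argument: for any $\epsilon \in (0,\alpha/(\alpha+d))$ one has $f \log^2 f \leq C_\epsilon f^{1-\epsilon}$ on $\{f<1\}$, and $\int f^{1-\epsilon}\,dx$ is controlled by the moment condition $\mu_\alpha(f) \leq \nu$ via Hölder's inequality against the weight $(1+\|x\|)^{-\alpha(1-\epsilon)/\epsilon}$.
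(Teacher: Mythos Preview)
Your outline is correct and matches the paper's approach: decompose $\mathbb{E}_f[(\hat{H}_n^w - H_n^*)^2]$ into squared bias plus variance, kill the bias via Corollary~\ref{Cor:WeightedBias} and the defining constraints of $\mathcal{W}^{(k)}$, establish the existence of uniformly bounded weights through exactly the Vandermonde limit you describe, and handle the variance by combining Lemma~\ref{varthm} with the covariance estimate~\eqref{Eq:Cov1}. Your derivation of the second conclusion via Cauchy--Schwarz and a uniform bound on $V(f)$ is what the paper leaves as ``immediate''; the uniform bound on $V(f)$ is Lemma~\ref{Lemma:VfBounds}(i) in the supplement, proved essentially as you sketch.

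One technical point: you speculate that the variance control hinges on a Poisson point process coupling. In fact the paper's proof of Lemma~\ref{varthm} works through a Berry--Esseen normal approximation to the multinomial distribution of nearest-neighbour counts; the paper explicitly notes (Section~3.4) that a Poisson approximation is the relevant tool only when $k$ is held fixed, in which case it yields a strictly larger limiting variance and the estimator is inefficient. The growth condition $k_0^*/\log^5 n \to \infty$ is precisely what makes the normal, rather than Poisson, regime applicable here. This is a detail of the auxiliary lemma rather than of Theorem~\ref{unifweightedclt} itself, so it does not affect the correctness of your high-level argument.
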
  
We remark that the level of smoothness we require for efficiency in Theorem~\ref{unifweightedclt}, namely $\beta > d/2$ is more than is needed for the two-stage estimator of \citet{Laurent1996} in the case where $f$ is compactly supported and bounded away from zero on its support, where $\beta > d/4$ suffices.  As alluded to in the introduction, the fact that the function $x \mapsto -x\log x$ is non-differentiable at $x=0$ means that the entropy functional is no longer smooth when $f$ has full support, so the arguments of \citet{Laurent1996} can no longer be applied and very different behaviour may occur \citep{LNS1999,CaiLow2011}.

It is also useful, e.g.~for the purposes of constructing confidence intervals for the entropy, to understand the asymptotic normality of the estimator.  To this end, let $\mathcal{H}$ denote the class of functions $h:\mathbb{R} \rightarrow \mathbb{R}$ with $\|h\|_\infty \leq 1$ and $|h(x) - h(y)| \leq |x-y|$ for all $x,y \in \mathbb{R}$.  For probability measures $P, Q$ on $\mathbb{R}$, we write
\[
d_{\mathrm{BL}}(P,Q) := \sup_{h \in \mathcal{H}}\biggl| \int_{-\infty}^\infty h \, d(P-Q)\biggr|
\]
for the bounded Lipschitz distance between $P$ and $Q$.  Recall that $d_{\mathrm{BL}}$ metrises weak convergence.  The asymptotic variance $V(f)$ can be estimated analogously to $H(f)$ by $\hat{V}_n^w := \max(\tilde{V}_n^w,0)$, where
\[
\tilde{V}_n^w := \frac{1}{n} \sum_{i=1}^n \sum_{j=1}^k w_j \log^2 \xi_{(j),i} - (\hat{H}_n^w)^2.
\]
Fixing $q \in (0,1)$, this suggests that a natural asymptotic $(1-q)$-level confidence interval for $H(f)$ is given by 
\[
I_{n,q} := \bigl[\hat{H}_n^w - n^{-1/2}z_{q/2}(\hat{V}_n^w)^{1/2},\hat{H}_n^w + n^{-1/2}z_{q/2}(\hat{V}_n^w)^{1/2}\bigr],
\]
where $z_q$ is the $(1-q)$th quantile of the standard normal distribution; see also \citet{DelattreFournier:16}.  Write $\mathcal{L}(Z)$ for the distribution of a random variable $Z$.
\begin{thm}
\label{Thm:UnifWeightedCLT}
Under the conditions of Theorem~\ref{unifweightedclt}, we have
\[
\sup_{k \in \{k_0^*,\ldots,k_1^*\}} \sup_{f \in \mathcal{F}_{d, \theta}} d_{\mathrm{BL}}\Bigl(\mathcal{L}\bigl(n^{1/2}(\hat{H}_n^w - H(f))\bigr),N\bigl(0,V(f)\bigr)\Bigr) \rightarrow 0
\]
as $n \rightarrow \infty$.  Consequently,
\[
\sup_{q \in (0,1)} \sup_{k \in \{k_0^*,\ldots,k_1^*\}} \sup_{f \in \mathcal{F}_{d, \theta}} \Bigl| \mathbb{P}_f\bigl(I_{n,q} \ni H(f)\bigr) - (1-q)\Bigr| \rightarrow 0.
\]
\end{thm}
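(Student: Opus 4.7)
The plan is to prove the two statements in turn, using Theorem~\ref{unifweightedclt} as an oracle reduction. Decomposing
\[
n^{1/2}\bigl(\hat H_n^w - H(f)\bigr) = n^{1/2}\bigl(\hat H_n^w - H_n^*\bigr) + n^{1/2}\bigl(H_n^* - H(f)\bigr),
\]
Theorem~\ref{unifweightedclt} immediately gives that the first term vanishes uniformly in $L^2$, hence in $d_{\mathrm{BL}}$.  For the second term, a centred i.i.d.\ sum with variance $V(f)$, I would invoke a uniform CLT in $d_{\mathrm{BL}}$.  Quantitatively, the Berry--Esseen theorem yields a Kolmogorov-distance bound of order $\mathbb{E}_f|\log f(X_1)+H(f)|^3/\{V(f)^{3/2} n^{1/2}\}$, which I would convert to $d_{\mathrm{BL}}$ by a simple truncation argument (using the uniform upper bound on $V(f)$).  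The integrability ingredient is a uniform bound on $\mathbb{E}_f|\log f(X_1)|^p$ for some $p>2$: the upper tail of $\log f$ is controlled by $\|f\|_\infty\le\gamma$, while the lower tail (where $f$ is small) is controlled by combining the moment bound $\mu_\alpha(f)\le\nu$ with the smoothness bound via $M_{f,a,\beta}$, both of which limit how slowly $f$ can decay.  To handle the degenerate case where $V(f)$ is near $0$, I split cases: if $V(f)\le\eta$, the identity $\mathbb{E}(H_n^*-H(f))^2=V(f)/n$ and a direct $L^1$ estimate give $d_{\mathrm{BL}}=O(\eta^{1/2})$; Berry--Esseen handles $V(f)\ge\eta$.

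For the confidence-interval statement I first establish uniform consistency $\hat V_n^w \to V(f)$ in probability.  Writing $B_i:=\sum_{j=1}^k w_j\log\xi_{(j),i}$, I would split
\[
\tilde V_n^w - V(f) = \bigl[\tilde V_n^w - \mathrm{Var}_f(B_1)\bigr] + \bigl[\mathrm{Var}_f(B_1) - V(f)\bigr].
\]
The first bracket is the deviation of a sample variance of i.i.d.\ terms from its mean, which tends to zero by a uniform law of large numbers once one has a uniform fourth-moment bound on $B_1$ (obtainable by arguments parallel to those underlying Theorem~\ref{unifweightedclt} but tracking higher moments).  For the second bracket I condition on $X_1$: the bias-cancellation property built into $\mathcal{W}^{(k)}$ ensures $\mathbb{E}_f[B_1\mid X_1]=-\log f(X_1)+o(1)$, so $\mathrm{Var}_f(\mathbb{E}_f[B_1\mid X_1])\to V(f)$; meanwhile, using that $V_d(n-1)\rho_{(j),1}^d f(X_1)$ is approximately $\mathrm{Gamma}(j,1)$ and that $\mathrm{Cov}(\log\Gamma(j),\log\Gamma(j'))\asymp 1/\max(j,j')$, a direct covariance computation together with the support constraint $w_j=0$ for $j<\lfloor k/d\rfloor$ yields $\mathrm{Var}_f(B_1\mid X_1)=O(1/k)$ uniformly in $X_1$, so $\mathbb{E}_f\mathrm{Var}_f(B_1\mid X_1)\to 0$.

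Combining the CLT with Slutsky's theorem then gives $T_n:=n^{1/2}(\hat H_n^w-H(f))/(\hat V_n^w)^{1/2}\to N(0,1)$ in distribution uniformly over $f\in\mathcal{F}_{d,\theta}$ (with the $V(f)=0$ case dispatched by a direct estimate), and hence uniform convergence of the law of $|T_n|$ to that of $|N(0,1)|$.  Since the latter has a continuous CDF, uniform weak convergence upgrades to uniform convergence of the CDFs in the argument (P\'olya's theorem), which delivers the claimed coverage bound uniformly in $q\in(0,1)$.  The main obstacle I anticipate is the variance-consistency step: in contrast to Theorem~\ref{unifweightedclt}, which concerns an average, the sample variance $\tilde V_n^w$ must be shown close to $V(f)$ rather than to $\mathrm{Var}_f(B_1)$, and this hinges on the genuinely new conditional-variance estimate $\mathrm{Var}_f(B_1\mid X_1)=o(1)$ that does not follow from the $L^2$ bound in Theorem~\ref{unifweightedclt}.
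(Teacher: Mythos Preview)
Your first part is essentially the paper's argument: the same oracle decomposition via $H_n^*$, the same use of Theorem~\ref{unifweightedclt} for the first piece, and Berry--Esseen for the second. Two minor differences are worth noting. First, the paper avoids your case split on small $V(f)$ by proving a uniform lower bound $\inf_{f\in\mathcal{F}_{d,\theta}}V(f)>0$ (Lemma~\ref{Lemma:VfBounds}(ii) in the supplement); this makes the Berry--Esseen route clean throughout. Second, the paper goes from Berry--Esseen to $d_{\mathrm{BL}}$ via the Wasserstein distance, using the \emph{non-uniform} Berry--Esseen bound so that $\int|F_n-\Phi|$ is finite and $O(n^{-1/2})$ directly; your truncation route also works but is a little more circuitous.

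The second part has a genuine gap. The summands $B_i=\sum_j w_j\log\xi_{(j),i}$ are \emph{not} independent: $\xi_{(j),i}$ depends on the $j$th nearest neighbour of $X_i$ among the whole sample, so every $B_i$ is a function of $X_1,\ldots,X_n$. Your ``deviation of a sample variance of i.i.d.\ terms'' step therefore fails as stated. What is actually needed to show $\mathrm{Var}_f(\tilde V_n^w)\to 0$ uniformly is a bound on the cross terms $\mathrm{Cov}_f(\log^2\xi_{(j),1},\log^2\xi_{(\ell),2})$, and this requires exactly the same joint-distribution machinery (multinomial/Dirichlet and normal approximations for the pair $(\xi_{(j),1},\xi_{(\ell),2})$) that drives the proof of Lemma~\ref{varthm}. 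The paper packages this as Lemma~\ref{Lemma:Vw} in the supplement, which establishes $\sup_{k,f}\mathbb{E}_f\{(\tilde V_n^w-V(f))^2\}\to 0$ by bounding $|\mathbb{E}_f\tilde V_n^w-V(f)|$ and $\mathrm{Var}_f\tilde V_n^w$ separately; your conditional-variance decomposition of $\mathrm{Var}_f(B_1)$ is fine for the mean part but does not address the dependence that governs the variance part. (Note also that $\tilde V_n^w$ is $n^{-1}\sum_i\sum_j w_j\log^2\xi_{(j),i}-(\hat H_n^w)^2$, not the sample variance of the $B_i$; the two differ by cross terms in $j$.) Once you have $L^2$-consistency of $\hat V_n^w$, the paper finishes with a Lipschitz sandwich of the indicator $\mathbbm{1}_{\{|x|\le z_{q/2}\}}$ rather than Slutsky plus P\'olya; either route works, but the Lipschitz approximation is what makes the uniformity in $q$ explicit.
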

We remark that the choice $k = k_n = \lceil \log^6 n \rceil$ with $w = w^{(k)} \in \mathcal{W}^{(k)}$ satisfying $\sup_{k \geq k_d} \|w^{(k)}\| < \infty$ for the weighted Kozachenko--Leonenko estimator satisfies the conditions for efficiency in Theorem~\ref{unifweightedclt} whenever $f \in \mathcal{F}_{d,\theta}$ with $\theta = (\alpha,\beta,\gamma,\nu,a) \in \Theta$ satisfying $\alpha > d$ and $\beta > d/2$; knowledge of the precise values of $\alpha$ and $\beta$ is not required.  Moreover, the uniformity of the asymptotics in $k$ means that if $\hat{k}_n = \hat{k}_n(X_1,\ldots,X_n)$ is a data-driven choice of $k$, the conclusions Theorem~\ref{Thm:UnifWeightedCLT} remain valid provided that 
$\mathbb{P}(\hat{k}_n < k_0^*) + \mathbb{P}(\hat{k}_n > k_1^*) \rightarrow 0$. 

\section{Bias and variance expansions for Kozachenko--Leonenko estimators}
\label{Sec:BiasVariance}
\subsection{Bias}
The proof of~\eqref{Eq:Eff} is derived from separate expansions for the bias and variance of the weighted Kozachenko--Leonenko estimator, and we treat the bias in this subsection.  To gain intuition, we initially focus for simplicity of exposition on the unweighted estimator
\[
\hat{H}_n = \frac{1}{n}\sum_{i=1}^n \log \xi_i,
\]
where we have written $\xi_i$ as shorthand for $\xi_{(k),i}$.  For $x \in \mathbb{R}^d$ and $u \in [0,\infty)$, we introduce the sequence of distribution functions
\[
F_{n,x}(u) := \mathbb{P}( \xi_i \leq u | X_i = x) = \sum_{j=k}^{n-1} \binom{n-1}{j}p_{n,x,u}^j(1-p_{n,x,u})^{n-1-j},
\]
where
\[
p_{n,x,u} := \int_{B_x(r_{n,u})} f(y) \, dy\qquad {\rm and} \qquad r_{n,u} := \biggl\{\frac{e^{\Psi(k)} u}{V_d (n-1)}\biggr\}^{1/d}.
\]
Further, for $u \in [0,\infty)$, define the limiting (Gamma) distribution function 
\[
F_x(u) := \exp\{-uf(x)e^{\Psi(k)}\}\sum_{j=k}^\infty \frac{1}{j!} \bigl\{uf(x)e^{\Psi(k)}\bigr\}^j = e^{-\lambda_{x,u}}\sum_{j=k}^\infty \frac{\lambda_{x,u}^j}{j!},
\]
where $\lambda_{x,u} := uf(x)e^{\Psi(k)}$. That this is the limit distribution for each fixed $k$ follows from a Poisson approximation to the Binomial distribution and the Lebesgue differentiation theorem. We therefore expect that 
\begin{align*}
	\mathbb{E}(\hat{H}_n) &= \int_\mathcal{X} f(x) \int_0^\infty \log u \,dF_{n,x}(u) \,dx \approx \int_\mathcal{X} f(x) \int_0^\infty \log u \,dF_x(u) \,dx \\
	& = \int_{\mathcal{X}} f(x) \int_0^\infty \log\Bigl(\frac{te^{-\Psi(k)}}{f(x)}\Bigr) e^{-t}\frac{t^{k-1}}{(k-1)!} \, dt \, dx = H.
\end{align*}
Although we do not explicitly use this approximation in our asymptotic analysis of the bias, it motivates much of our development.  It also explains the reason for using $e^{\Psi(k)}$ in the definition of $\xi_{(k),i}$, rather than simply $k$.   Lemma~\ref{Lemma:Bias} below gives an expression for the asymptotic bias of the unweighted Kozachenko--Leonenko estimator. 
\begin{lemma}
\label{Lemma:Bias}
	Fix $d \in \mathbb{N}$ and $\theta = (\alpha,\beta,\nu,\gamma,a) \in \Theta$.  Let $k^* = k_n^*$ denote any deterministic sequence of positive integers with $k^* = O(n^{1-\epsilon})$ as $n \rightarrow \infty$ for some $\epsilon >0$. Then there exist $\lambda_1, \ldots, \lambda_{\lceil \beta/2 \rceil -1} \in \mathbb{R}$, depending only on $f$ and $d$, such that $\sup_{f \in \mathcal{F}_{d,\theta}} \max_{l=1,\ldots,\lceil \beta/2 \rceil -1} |\lambda_l| < \infty$ and for each $\epsilon > 0$,
\[
	\sup_{f \in \mathcal{F}_{d,\theta}}\biggl| \mathbb{E}_f(\hat{H}_n) - H - \sum_{l=1}^{\lceil \beta/2 \rceil -1} \frac{\Gamma(k+2l/d) \Gamma(n)}{ \Gamma(k) \Gamma(n+2l/d)} \lambda_l\biggr| = O \biggl( \max \biggl\{ \frac{k^{\frac{\alpha}{\alpha+d} - \epsilon}}{n^{\frac{\alpha}{\alpha+d} - \epsilon}}\, , \, \frac{k^{\frac{\beta}{d}}}{n^{\frac{\beta}{d}}} \biggr\} \biggr)
\]
as $n \rightarrow \infty$, uniformly for $k \in \{1,\ldots,k^*\}$, where $\lambda_l=0$ if $2l \geq d \alpha/(\alpha+d)$.
\end{lemma}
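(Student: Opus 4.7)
My plan is to reduce the bias computation to an expectation against a Beta distribution, Taylor-expand the resulting transformation using the smoothness encoded in $\mathcal{F}_{d,\theta}$, and then use two explicit Beta moments. Conditional on $X_1 = x$, set $R_j := \|X_{j+1} - x\|$ and $V_j := \int_{B_x(R_j)} f(y)\, dy$ for $j=1,\ldots,n-1$. Because the $V_j$ are i.i.d.\ Uniform$[0,1]$, their $k$th order statistic $V_{(k)} = \int_{B_x(\rho_{(k),1})} f(y)\, dy$ has the $\mathrm{Beta}(k,n-k)$ distribution. Writing $h(u) := p_{n,x,u}$ and taking $W \sim \mathrm{Beta}(k,n-k)$, one obtains the clean identity
\[
\mathbb{E}_f\bigl(\log \xi_{(k),1} \mid X_1 = x\bigr) = \mathbb{E}\bigl(\log h^{-1}(W)\bigr),
\]
so the unconditional bias is obtained by integrating against $f(x)\,dx$.

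The next step is a Taylor expansion of $p_{n,x,u}$ in $r_{n,u}$. Because $B_x(r_{n,u})$ is symmetric about $x$, all odd-order derivatives of $f$ integrate against angular averages to zero, yielding
\[
p_{n,x,u} = \frac{e^{\Psi(k)} u}{n-1}\biggl\{f(x) + \sum_{l=1}^{\lceil \beta/2\rceil -1} \tilde c_l(f,x)\, r_{n,u}^{2l} + \mathrm{rem}(x,u)\biggr\},
\]
with $|\mathrm{rem}(x,u)| \lesssim a(f(x))\, r_{n,u}^\beta$ whenever $r_{n,u} \leq r_a(x)$. Inverting this expansion iteratively (as a geometric series in the correction, or via Lagrange inversion) and then taking logs converts the multiplicative corrections into additive ones:
\[
\log h^{-1}(W) = \log\frac{(n-1)W}{e^{\Psi(k)} f(x)} - \sum_{l=1}^{\lceil \beta/2\rceil -1} \frac{\tilde c_l(f,x)}{V_d^{2l/d}\, f(x)^{1+2l/d}}\, W^{2l/d} + \text{further remainder}.
\]
Applying the exact Beta identities $\mathbb{E}\log W = \Psi(k) - \Psi(n)$ and $\mathbb{E} W^{2l/d} = \Gamma(k+2l/d)\Gamma(n)/\{\Gamma(k)\Gamma(n+2l/d)\}$, together with the standard estimate $\log(n-1) - \Psi(n) = O(1/n)$, and then integrating against $f(x)\,dx$, produces $H(f) = -\int f\log f$ as the main term and identifies
\[
\lambda_l = -\int_{\mathcal{X}} \frac{\tilde c_l(f,x)}{V_d^{2l/d}\, f(x)^{2l/d}}\, dx.
\]

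The principal obstacle is rendering every step uniform over $\mathcal{F}_{d,\theta}$ when $f$ has unbounded support and may be small. The natural device is a split of $\mathcal{X}$ at a level $f(x) = \delta_n$: on the bulk $\{f \geq \delta_n\}$ the Taylor expansion is valid with smoothness controlled by $a(\delta_n)$, while on the tail the bias contribution is bounded using $\mu_\alpha(f) \leq \nu$ and Markov's inequality. Optimising $\delta_n$ as a power of $k/n$ and exploiting the fact that $a \in \mathcal{A}$ grows subpolynomially yields the error exponent $\alpha/(\alpha+d) - \epsilon$, while the Taylor remainder contributes $(k/n)^{\beta/d}$ via the moment $\mathbb{E} W^{\beta/d}$. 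The same tail analysis also dictates when $\lambda_l$ is well-defined: since the integrand $\tilde c_l(f,x)/f(x)^{2l/d}$ behaves like $f(x)^{1-2l/d}$ multiplied by bounded derivative ratios, convergence requires $2l < d\alpha/(\alpha+d)$, and when this fails the corresponding term must be absorbed into the error with $\lambda_l$ set to zero. The most delicate bookkeeping is the uniform-in-$k$ control of the inversion and logarithmic-expansion remainders, together with truncating the integral at the scale $r_a(x)$ beyond which the Taylor expansion stops being valid; the constraint $k^* = O(n^{1-\epsilon})$ is used precisely to guarantee that the typical $r_{n,u}$ shrinks fast enough for these estimates to close.
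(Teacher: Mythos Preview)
Your proposal is correct and follows essentially the same route as the paper: the Beta$(k,n-k)$ representation of $h_x(\rho_{(k),1})$, a bulk/tail split of $\mathcal{X}$ at level $\delta_n$, a smoothness expansion with Beta moment identities $\mathbb{E}W^{2l/d}=\Gamma(k+2l/d)\Gamma(n)/\{\Gamma(k)\Gamma(n+2l/d)\}$, and the moment condition on $f$ to control the tail. The only cosmetic difference is that the paper packages your ``expand $p_{n,x,u}$, then invert'' step into an auxiliary lemma that directly gives $|V_d f(x)h_x^{-1}(s)^d - \sum_{l} b_l(x)s^{1+2l/d}|$, and then organises the argument as five explicit remainder terms $R_1,\ldots,R_5$ (tail in $x$; Beta tail in $s$; expansion error; reinserting the $s$-tail; reinserting the $x$-tail), additionally truncating the Beta integral to $s\le a_n/(n-1)$ with $a_n\asymp k\log n$ via binomial tail bounds --- a step you allude to but do not isolate.
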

When $d \geq 3$, $\alpha > 2d/(d-2)$ and $\beta > 2$, we have
\[
	\lambda_1 = - \frac{1}{2(d+2)V_d^{2/d}} \int_\mathcal{X} \frac{\Delta f(x)}{f(x)^{2/d}} \,dx,
\]
which is finite under these assumptions; cf.\ the second part of Proposition~\ref{Prop:Moment} in Section~\ref{Sec:biasproof}.  Moreover, since, for each $l > 0$, we have $\frac{\Gamma(n)}{ \Gamma(n+2l/d)} = n^{-2l/d}\bigl\{1 + O(n^{-1})\bigr\}$, we deduce from Lemma~\ref{Lemma:Bias} that in this setting, 
\[
\sup_{f \in \mathcal{F}_{d,\theta}}\biggl| \mathbb{E}_f(\hat{H}_n) - H + \frac{\Gamma(k+2/d)}{2(d+2)V_d^{2/d}\Gamma(k)n^{2/d}} \int_\mathcal{X} \frac{\Delta f(x)}{f(x)^{2/d}} \,dx\biggr| = o\Bigl(\frac{k^{2/d}}{n^{2/d}}\Bigr)
.
\]
In particular, when $d \geq 4$ and $\int_\mathcal{X} \frac{\Delta f(x)}{f(x)^{2/d}} \,dx \neq 0$, the bias of the unweighted Kozachenko--Leonenko estimator precludes its efficiency.


On the other hand, Lemma~\ref{Lemma:Bias} motivates the definition of the class of weight vectors $\mathcal{W}^{(k)}$ in~\eqref{Eq:Wk}, and facilitates the expansion for the bias of the weighted Kozachenko--Leonenko estimator in Corollary~\ref{Cor:WeightedBias} below.  In particular, since $2( \lfloor d/4 \rfloor + 1)/d > 1/2$, we see that this result provides conditions under which the bias is $o(n^{-1/2})$ for suitably chosen $k$.  This explains why we let $\ell$ take values in the range $\{1,\ldots,\lfloor d/4 \rfloor\}$ in~\eqref{Eq:Wk}.
\begin{cor}
\label{Cor:WeightedBias}
Assume the conditions of Lemma~\ref{Lemma:Bias}.  If $w = w^{(k)} \in \mathcal{W}^{(k)}$ for $k \geq k_d$ and $\sup_{k \geq k_d} \|w^{(k)}\| < \infty$, then for every $\epsilon > 0$,
\[
\sup_{f \in \mathcal{F}_{d,\theta}} \bigl| \mathbb{E}_f(\hat{H}_n^w) - H(f)\bigr| = O \biggl( \max \biggl\{ \frac{k^{\frac{\alpha}{\alpha+d} - \epsilon}}{n^{\frac{\alpha}{\alpha+d} - \epsilon}} \, , \, \frac{k^{\frac{2( \lfloor d/4 \rfloor +1)}{d}}}{n^{\frac{2( \lfloor d/4 \rfloor +1)}{d}}} \, , \, \frac{k^{\frac{\beta}{d}}}{n^{\frac{\beta}{d}}}\biggr\} \biggr),
\]
uniformly for $k \in \{1,\ldots,k^*\}$.
\end{cor}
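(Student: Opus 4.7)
The plan is to exploit the linearity of $\hat{H}_n^w$ in the weights and let the constraints defining $\mathcal{W}^{(k)}$ cancel the leading bias terms one by one. Writing
\[
\hat{H}_n^w = \sum_{j=1}^k w_j \hat{H}_n^{(j)},
\]
where $\hat{H}_n^{(j)}$ denotes the unweighted Kozachenko--Leonenko estimator in~\eqref{Eq:KLEstimator} with $k$ replaced by $j$, I would apply Lemma~\ref{Lemma:Bias} termwise and take expectations to obtain
\[
\mathbb{E}_f(\hat{H}_n^w) = H(f)\sum_{j=1}^k w_j + \sum_{l=1}^{\lceil \beta/2 \rceil - 1} \lambda_l \frac{\Gamma(n)}{\Gamma(n+2l/d)} \sum_{j=1}^k w_j \frac{\Gamma(j+2l/d)}{\Gamma(j)} + \sum_{j=1}^k w_j R_j,
\]
where, uniformly over $f \in \mathcal{F}_{d,\theta}$ and $j \in \{1,\dots,k^*\}$, the remainder satisfies $|R_j| \leq C\max\{(j/n)^{\alpha/(\alpha+d)-\epsilon},(j/n)^{\beta/d}\}$, and the coefficients $\lambda_l$ are uniformly bounded over $\mathcal{F}_{d,\theta}$.

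Next I would invoke, in turn, each of the three defining features of $\mathcal{W}^{(k)}$. The normalisation $\sum_j w_j = 1$ collapses the leading term to $H(f)$. The $\lfloor d/4 \rfloor$ orthogonality conditions annihilate the summands with $l \leq \lfloor d/4 \rfloor$, leaving only indices $l \in \{\lfloor d/4 \rfloor + 1,\dots,\lceil \beta/2 \rceil - 1\}$ (an empty range is handled trivially). The support restriction forces $w$ to have at most $d$ nonzero coordinates, so $\|w\|_1 \leq \sqrt{d}\,\|w\|_2$, which by hypothesis is uniformly bounded in $k$. Applying the standard Gamma function asymptotics $\Gamma(j+a)/\Gamma(j) \leq C_a j^a$ and $\Gamma(n)/\Gamma(n+a) \leq C_a' n^{-a}$, valid uniformly for $a$ in any bounded subset of $(0,\infty)$, each surviving middle term is bounded by a constant multiple of $(k/n)^{2l/d}$; since $k/n = o(1)$, the dominant contribution comes from the smallest index $l = \lfloor d/4 \rfloor + 1$ and is of order $(k/n)^{2(\lfloor d/4 \rfloor + 1)/d}$. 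For the residual sum, $\bigl|\sum_j w_j R_j\bigr| \leq \|w\|_1 \max_{j \leq k} |R_j|$, which is $O\bigl(\max\{(k/n)^{\alpha/(\alpha+d)-\epsilon},(k/n)^{\beta/d}\}\bigr)$. Taking the maximum of the three contributions yields exactly the bound asserted in the corollary, and uniformity in $f$ is inherited from Lemma~\ref{Lemma:Bias}.

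There is no genuinely hard step here: given Lemma~\ref{Lemma:Bias}, Corollary~\ref{Cor:WeightedBias} is essentially bookkeeping on top of a linear system whose cancellations have been engineered into $\mathcal{W}^{(k)}$ for precisely this purpose. The only minor care required is to confirm that the implicit constants in the Gamma asymptotics can be taken uniform over the finite collection of exponents $\{2l/d : 1 \leq l \leq \lceil \beta/2 \rceil - 1\}$, which is immediate from Stirling's formula, and to check that the bound for $l = \lfloor d/4 \rfloor + 1$ indeed dominates the higher-$l$ terms (immediate since $k/n \leq 1$).
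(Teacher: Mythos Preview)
Your proposal is correct and follows essentially the same approach as the paper: decompose $\hat{H}_n^w$ linearly, apply Lemma~\ref{Lemma:Bias} termwise, use the constraints in $\mathcal{W}^{(k)}$ to cancel the bias terms with $l \leq \lfloor d/4 \rfloor$, bound the surviving $l \geq \lfloor d/4 \rfloor + 1$ terms via Gamma-ratio asymptotics (the paper phrases this as monotonicity of the ratios), and control the remainder using $\|w\|_1 \leq d^{1/2}\|w\|_2$. The only cosmetic difference is that the paper organises the computation via a single triangle-inequality split rather than writing out the three contributions separately.
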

The proof of Lemma~\ref{Lemma:Bias} is given in Section~\ref{Sec:biasproof}, but we present here some of the main ideas that are particularly relevant for the case $d \geq 3$, $\alpha > 2d/(d-2)$ and $\beta \in (2,4]$. First, note that
\begin{equation}
\label{Eq:dFnxu}
	\frac{dF_{n,x}(u)}{du} = \mathrm{B}_{k,n-k}(p_{n,x,u})\frac{\partial p_{n,x,u}}{\partial u},
\end{equation}
where $\mathrm{B}_{a,b}(s) := \mathrm{B}_{a,b}^{-1}s^{a-1}(1-s)^{b-1}$ denotes the density of a $\mathrm{Beta}(a,b)$ random variable at $s \in (0,1)$, with $\mathrm{B}_{a,b} := \Gamma(a) \Gamma(b) / \Gamma(a + b)$.  For $x \in \mathcal{X}$ and $r>0$, define $h_x(r) := \int_{B_x(r)} f(y)\,dy$.  Since $h_x(r)$ is a continuous, non-decreasing function of $r$, we can define a left-continuous inverse for $s \in (0,1)$ by
\begin{equation}
\label{Eq:hxinvdef}
h_x^{-1}(s) := \inf\{r > 0:h_x(r) \geq s\} = \inf\{r > 0:h_x(r) = s\} ,
\end{equation}
so that $h_x(r) \geq s$ if and only if $r \geq h_x^{-1}(s)$. We use the approximation
\[
	V_d f(x) h_x^{-1}(s)^d \approx s - \frac{s^{1+2/d} \Delta f(x)}{2(d+2)V_d^{2/d}f(x)^{1+2/d}}
\]
for small $s > 0$, which is formalised in Lemma~\ref{Lemma:hxinvbounds}(ii) in Section~\ref{Sec:biasproof}. In the case $d \geq 3$, $\alpha > 2d/(d-2)$ and $\beta \in (2,4]$, the proof of Lemma~\ref{Lemma:Bias} can be seen as justifying the use of the above approximation in the following:
\begin{align*}
	\mathbb{E}(\hat{H}_n) &= \int_\mathcal{X} \! f(x) \!\int_0^\infty \log u \,dF_{n,x}(u) \,dx \\
	&= \int_\mathcal{X}\! f(x) \!\int_0^1 \log \biggl( \frac{V_d(n-1)h_x^{-1}(s)^d}{e^{\Psi(k)}} \biggr)\mathrm{B}_{k,n-k}(s) \,ds \,dx \\
	& \approx \int_\mathcal{X}\! f(x) \! \int_0^1 \biggl\{ \log \Bigl( \frac{(n-1)s}{e^{\Psi(k)}f(x)} \Bigr) - \frac{V_d^{-2/d} s^{2/d} \Delta f(x)}{2(d+2)f(x)^{1+2/d}} \biggr\}\mathrm{B}_{k,n-k}(s) \,ds \,dx \\
	& = \log(n-1) - \Psi(n) + H - \frac{V_d^{-2/d} \Gamma(k+2/d) \Gamma(n)}{2(d+2)\Gamma(k) \Gamma(n+2/d)} \int_\mathcal{X} \frac{\Delta f(x)}{f(x)^{2/d}} \,dx.
\end{align*}
Note that $\log(n-1) - \Psi(n) = -1/(2n) +o(1/n)$, which leads to the given bias expression.  The proof in other cases proceeds along similar lines.  These heuristics make clear that the function $h_x^{-1}(\cdot)$ plays a key role in understanding the bias. This function is in general complicated, though some understanding can be gained from the following uniform density example, where it can be evaluated explicitly.  This leads to an exact expression for the bias, even though the discontinuities mean that the density does not belong to $\mathcal{F}_{1,\theta}$ for any $\theta \in \Theta$.
\begin{eg}
Consider the uniform distribution, $U[0,1]$. For $x \leq 1/2$, we have
\[
	h_x^{-1}(s) =  \begin{cases} s/2, & \mbox{if } s \leq 2x \\ s-x, & \mbox{if } 2x<s \leq 1. \end{cases}
\]
It therefore follows that
\begin{align*}
	&\mathbb{E}(\hat{H}_n) - H = 2 \int_0^{1/2} \int_0^\infty \log u \,dF_{n,x}(u) \,dx \\
	&=2 \int_0^{1/2} \int_0^1 \log \biggl( \frac{2(n-1)h_x^{-1}(s)}{e^{\Psi(k)}} \biggr) \mathrm{B}_{k,n-k}(s) \,ds \,dx \\
	&=2 \int_0^1 \!\! \mathrm{B}_{k,n-k}(s) \biggl\{ \int_0^{s/2} \!\! \log (2(s-x)) \,dx + \int_{s/2}^{1/2}\!\! \log s \,dx \biggr\} \,ds +\log\Bigl(\frac{n-1}{e^{\Psi(k)}}\Bigr) \\
	& = \frac{k}{n} ( \log 4 -1) + \log(n-1) - \Psi(n).
\end{align*}
\end{eg}

\subsection{Discussion of conditions and weakening of conditions}
\label{Sec:Assumptions}

Recall the definitions of the quantity $M_{f,a,\beta}(x)$ and $\mathcal{A}$ from Section~\ref{Sec:Main}.  In addition to standard moment and boundedness assumptions, the condition $f \in \mathcal{F}_{d,\theta}$ requires that 
\begin{equation}
\label{Eq:GrowthCond}
\sup_{x:f(x) \geq \delta} M_{f,a,\beta}(x) \leq a(\delta) \quad \text{for all $\delta > 0$ and some $a \in \mathcal{A}$.}
\end{equation}
In this subsection, we explore the condition~\eqref{Eq:GrowthCond} further, with the aid of several examples.  

The condition~\eqref{Eq:GrowthCond} is reminiscent of more standard H\"{o}lder smoothness assumptions,
though we also require that the partial derivatives of the density vary less where $f$ is small.  On the other hand, we also allow the neighbourhoods of $x$ in the definition of $M_{f,a,\beta}(x)$ to shrink where $f(x)$ is small.  Roughly speaking, the condition requires that the partial derivatives of the density decay nearly as fast as the density itself in the tails of the distribution.   As a simple stability property, if~\eqref{Eq:GrowthCond} holds for a density $f_0$, then it also holds for any density from the location-scale family:
\[
\{f_\Sigma(\cdot)=|\Sigma|^{-1/2} f_0\bigl(\Sigma^{-1/2}(\cdot - \mu)\bigr): \mu \in \mathbb{R}^d, \Sigma=\Sigma^T \in \mathbb{R}^{d \times d} \ \text{positive definite}\}.
\]
This observation allows us to consider canonical representatives of location-scale families in the examples below.
\begin{prop}
\label{Prop:Examples}
For each of the following densities $f$, and for each $d \in \mathbb{N}$, there exists $\theta \in \Theta$ such that $f \in \mathcal{F}_{d,\theta}$: 
\begin{enumerate}[(i)]
\item $f(x) = f(x_1,\ldots,x_d) = (2\pi)^{-d/2}e^{-\|x\|^2/2}$, the standard normal density;
\item $f(x) = f(x_1,\ldots,x_d) \propto (1+ \|x\|^2 / \rho)^{-\frac{d+\rho}{2}}$, the multivariate-$t$ distribution with $\rho > 0$ degrees of freedom.
\end{enumerate}
Moreover, the following univariate density $f$ also belongs to $\mathcal{F}_{1,\theta}$ for suitable $\theta \in \Theta$:
\[
f(x) \propto \exp\Bigl(-\frac{1}{1-x^2}\Bigr)\mathbbm{1}_{\{x \in (-1,1)\}}.
\]
\end{prop}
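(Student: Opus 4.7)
\textbf{Proof plan for Proposition~\ref{Prop:Examples}.} In each case we must exhibit $\theta = (\alpha,\beta,\nu,\gamma,a) \in \Theta$ such that $f \in \mathcal{F}_{d,\theta}$. All three densities are infinitely differentiable on their supports, so we are free to pick $\beta$ arbitrarily large; we will choose $\beta \notin \mathbb{N}$, giving $m := \lceil \beta \rceil - 1 = \lfloor \beta \rfloor$ and $\beta - m \in (0,1)$. The boundedness condition $\|f\|_\infty \leq \gamma$ holds trivially in each case. For the moment condition $\mu_\alpha(f) \leq \nu$, any $\alpha > 0$ works for the Gaussian and the bump function (all polynomial moments being finite), while for the multivariate-$t$ we take any $\alpha \in (0, \rho)$. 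The task therefore reduces to verifying the growth condition~\eqref{Eq:GrowthCond} for an appropriately chosen $a \in \mathcal{A}$.

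The first step is to derive, for each $t \leq m+1$, a pointwise inequality of the form $\|f^{(t)}(x)\|/f(x) \leq C_t \Phi(x)^t$ for a natural scale function $\Phi$. For the standard Gaussian, $f^{(t)}(x) = f(x) H_t(x)$ where $H_t$ is a multivariate Hermite polynomial of degree $t$, so one may take $\Phi(x) = 1 + \|x\|$. For the multivariate-$t$, the partial derivatives of $\log f$ are globally uniformly bounded; by the Fa\`a di Bruno formula, so is $\|f^{(t)}\|/f$, and $\Phi \equiv 1$ suffices. For the bump function, an induction starting from $(\log f)'(x) = -2x/(1-x^2)^2$ shows that $f^{(t)}(x)/f(x)$ is a rational function whose denominator is a power of $1 - x^2$, so we may take $\Phi(x) = 1/(1-x^2)$ raised to a fixed power.

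The second step is to verify that on $\{x : f(x) \geq \delta\}$, the scale function $\Phi(x)$ is bounded by a poly-logarithmic function of $1/\delta$. In the Gaussian case, $f(x) \geq \delta$ forces $\|x\|^2 \leq 2\log\bigl((2\pi)^{-d/2}/\delta\bigr)$; in the bump case, $f(x) \geq \delta$ forces $1/(1-x^2) \leq \log(c/\delta)$; and in the $t$-case, the bound holds trivially. Consequently, in all three cases there is a polynomial $P$ such that $\max_{t \leq m+1} \|f^{(t)}(x)\|/f(x) \leq P\bigl(1 \vee \log(1/f(x))\bigr)$ for all $x \in \mathcal{X}$.

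The third step is to handle the H\"older term in $M_{f,a,\beta}(x)$. By the fundamental theorem of calculus, for $y \in B_x(r_a(x))$,
\[
\|f^{(m)}(y) - f^{(m)}(x)\| \leq \|y-x\| \sup_{\xi \in B_x(r_a(x))} \|f^{(m+1)}(\xi)\|,
\]
so dividing by $f(x) \|y-x\|^{\beta - m}$ yields the bound $r_a(x)^{m+1-\beta} \cdot \sup_\xi \|f^{(m+1)}(\xi)\|/f(x)$. Taking $a(\delta) := K(1 + \log(1/\delta))^N$ with $K$ and $N$ sufficiently large makes $r_a(x)$ small enough that (a) in case~(iii) $B_x(r_a(x))$ remains strictly inside $(-1,1)$, and (b) the bound from step two on $\|\dot f\|/f$ implies $f(\xi)/f(x) \leq 2$ throughout $B_x(r_a(x))$, which then transfers the bound on $\|f^{(m+1)}(\xi)\|/f(\xi)$ into a bound on $\|f^{(m+1)}(\xi)\|/f(x)$. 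Combined with the power $r_a(x)^{m+1-\beta} = O\bigl(a(f(x))^{-(m+1-\beta)/(\beta \wedge 1)}\bigr)$, this closes the inequality $M_{f,a,\beta}(x) \leq a(f(x))$. The main obstacle is precisely this self-consistent choice of $a$ in the bump case, where $f$ and $\Phi$ both blow up near the boundary $\{|x| = 1\}$: one needs $r_a(x)$ to shrink fast enough (large $N$) to preserve $B_x(r_a(x)) \subset (-1,1)$ and to keep $f(\xi)$ comparable to $f(x)$, yet not so fast that the factor $r_a(x)^{m+1-\beta}$ is dominated by the budget offered by the prescribed $a(f(x))$; a careful bookkeeping of the exponents of $\log(1/f(x))$ emerging from steps two and three pins down an admissible $N$.
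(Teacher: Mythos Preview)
Your plan is correct and follows essentially the same route as the paper: bound $\|f^{(t)}\|/f$ by a polynomial in a natural scale function (Hermite polynomials for the Gaussian, constants for the $t$, powers of $(1-x^2)^{-1}$ for the bump), translate this into a poly-logarithmic bound on superlevel sets, and control the H\"older term via the $(m+1)$th derivative and the mean value theorem.  Two small remarks: first, in the Gaussian case the paper simply enlarges $B_x(r_a(x))$ to the unit ball (valid since $r_a\le 1$), incurring a factor $\sup_{w\in B_0(1)}f(x+w)/f(x)\le e^{\|x\|}$ and hence taking $a(\delta)\asymp e^{\sqrt{2\log(1/\delta)}}\cdot\mathrm{poly}$ rather than your poly-log --- both lie in $\mathcal{A}$, so either works; second, the ``tension'' you flag in the bump case is illusory, since $r_a(x)^{m+1-\beta}\le 1$ only helps and there is no upper constraint on $N$, only the lower constraints needed for (a) and (b).
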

The final part of Proposition~\ref{Prop:Examples} is included because it provides an example of a density $f$ that belongs to $\mathcal{F}_{1,\theta}$ for suitable $\theta \in \Theta$, even though there exist points $x_0 \in \mathbb{R}$ with $f(x_0) = 0$.   

On the other hand, there are also examples, such as Example~\ref{Eg:Gamma} below, where the behaviour of $f$ near a point $x_0$ with $f(x_0) = 0$ precludes $f$ belonging to $\mathcal{F}_{d,\theta}$ for any $\theta \in \Theta$.  To provide some guarantees in such settings, we now give a very general condition under which our approach to studying the bias can be applied.  
\begin{prop}
\label{Prop:WeakCond}
Assume that $f$ is bounded, that $\mu_\alpha(f) < \infty $ for some $\alpha > 0$, and let $k^*$ be as in Lemma~\ref{Lemma:Bias}.  Let $a_n := 3(k+1)\log(n-1)$, let $r_x := \bigl\{\frac{2a_n}{V_d(n-1)f(x)}\bigr\}^{1/d}$, and assume further that there exists $\beta > 0$ such that the function on $\mathcal{X}$ given by 
\[
C_{n,\beta}(x) := \left\{ \begin{array}{ll} \sup_{y \in B_x^\circ(r_x)} |f(y)-f(x)|/\|y-x\|^\beta & \mbox{if $\beta \leq 1$,} \\
\sup_{y \in B_x^\circ(r_x)} \|\dot{f}(y) - \dot{f}(x)\|/\|y-x\|^{\beta-1} & \mbox{if $\beta > 1$,} \end{array} \right.
\]
is real-valued.  Suppose that $\mathcal{X}_n \subseteq \mathcal{X}$ is such that
\begin{equation}
\label{Eq:WeakCond}
\sup_{x \in \mathcal{X}_n} \Bigl(\frac{a_n}{n-1}\Bigr)^{\tilde{\beta}/d} \ \frac{C_{n,\tilde{\beta}}(x)}{f(x)^{1+\tilde{\beta}/d}} \rightarrow 0
\end{equation}
as $n \rightarrow \infty$, where $\tilde{\beta} := \beta \wedge 2$.  Then writing $q_n := \int_{\mathcal{X}_n^c} f$, we have for every $\epsilon > 0$ that
\begin{equation}
\label{Eq:WeakBias}
\mathbb{E}_f(\hat{H}_n) - H = O\biggl(\max\biggl\{ \frac{k^{\tilde{\beta}/d}}{n^{\tilde{\beta}/d}}\int_{\mathcal{X}_n} \frac{C_{n,\tilde{\beta}}(x)}{f(x)^{\tilde{\beta}/d}} \, dx \, , \, q_n^{1-\epsilon} \, , \, q_n\log n \, , \, \frac{1}{n}\biggr\}\biggr),
\end{equation}
uniformly for $k \in \{1,\ldots,k^*\}$.  
\end{prop}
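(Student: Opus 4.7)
The plan is to mirror the argument for Lemma~\ref{Lemma:Bias}, but without the global smoothness control afforded by $f \in \mathcal{F}_{d,\theta}$. The starting point is the integral representation
\[
\mathbb{E}_f(\hat{H}_n) - H = \int_{\mathcal{X}} f(x) \int_0^1 \log\biggl(\frac{V_d(n-1) h_x^{-1}(s)^d}{e^{\Psi(k)}}\biggr) \mathrm{B}_{k,n-k}(s)\,ds\,dx + \int_{\mathcal{X}} f(x)\log f(x)\,dx,
\]
obtained by the change of variables leading up to~\eqref{Eq:dFnxu}--\eqref{Eq:hxinvdef}.  First I would split the outer $x$-integral into the tail piece over $\mathcal{X}_n^c$ and the bulk piece over $\mathcal{X}_n$.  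The tail piece is handled by a direct moment argument: combining boundedness of $f$ with $\mu_\alpha(f) < \infty$ gives the moment bounds on $\log\xi_{(k),1}$ of the type established for $\mathcal{F}_{d,\theta}$ in Proposition~\ref{Prop:Moment}; a Cauchy--Schwarz / H\"older split of $\int_{\mathcal{X}_n^c} f(x)\,\mathbb{E}_x(\log\xi_{(k),1})\,dx$ and $\int_{\mathcal{X}_n^c} f(x)\log f(x)\,dx$ then yields the $q_n^{1-\epsilon} + q_n\log n$ contributions.

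Next I would handle the bulk piece over $\mathcal{X}_n$.  The crucial truncation is to restrict to $s \leq a_n/(n-1)$: since $s$ under $\mathrm{B}_{k,n-k}$ concentrates near $k/n$ and $a_n = 3(k+1)\log(n-1)$, a standard Chernoff bound for the Beta distribution shows $\mathbb{P}(\mathrm{Beta}(k,n-k) > a_n/(n-1)) = O(n^{-2})$, so (combined with the uniform bound $\log\xi_{(k),1} = O(\log n)$ coming from $\|f\|_\infty < \infty$) the contribution of $s > a_n/(n-1)$ is $O(1/n)$.  On the truncated range $s \leq a_n/(n-1)$, by the definition $V_d r_x^d f(x) = 2a_n/(n-1)$ one has $h_x(r_x) \geq a_n/(n-1) \geq s$ (using the boundedness of $f$ and the definition of $r_x$ to control lower-order corrections), hence $h_x^{-1}(s) \leq r_x$.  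Consequently the smoothness hypothesis on $C_{n,\tilde\beta}$ applies, giving
\[
\bigl| V_d h_x^{-1}(s)^d f(x) - s\bigr| = O\bigl(C_{n,\tilde\beta}(x)\, h_x^{-1}(s)^{d + \tilde\beta}\bigr),
\]
(with the first-order $\dot f$ term vanishing by symmetry of the ball when $\tilde\beta > 1$), which inverted yields $h_x^{-1}(s)^{\tilde\beta} = O\bigl((s/(V_d f(x)))^{\tilde\beta/d}\bigr)$ and therefore
\[
\log\biggl(\frac{V_d h_x^{-1}(s)^d f(x)}{s}\biggr) = O\!\biggl(\frac{C_{n,\tilde\beta}(x)}{f(x)^{1+\tilde\beta/d}}\, s^{\tilde\beta/d}\biggr).
\]
Integrating this against $f(x)\mathrm{B}_{k,n-k}(s)$ and using $\int_0^1 s^{\tilde\beta/d}\mathrm{B}_{k,n-k}(s)\,ds = \Gamma(k+\tilde\beta/d)\Gamma(n)/\{\Gamma(k)\Gamma(n+\tilde\beta/d)\} = O((k/n)^{\tilde\beta/d})$ gives the main term of~\eqref{Eq:WeakBias}.

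The remaining bookkeeping is to match the leading (non-error) part of the log with $H$: after the change of variable the leading part is $\int_0^1 \log((n-1)s e^{-\Psi(k)}/f(x))\mathrm{B}_{k,n-k}(s)\,ds$, and the $s$-integral evaluates using $\int_0^1 \log s\,\mathrm{B}_{k,n-k}(s)\,ds = \Psi(k) - \Psi(n)$ and $\log(n-1) - \Psi(n) = -1/(2n) + O(1/n^2)$, which supplies the $O(1/n)$ contribution listed in~\eqref{Eq:WeakBias}.  I expect the main obstacle to be the $\mathcal{X}_n^c$ analysis: without the quantitative smoothness $a(\delta)$ one cannot invoke Lemma~\ref{Lemma:Bias} directly, so one must establish uniform-in-$x$ bounds on the moments of $\log \xi_{(k),1}$ using only $\|f\|_\infty < \infty$ and $\mu_\alpha(f) \leq \nu$; this forces the weaker $q_n^{1-\epsilon}$ rate (rather than $q_n$) and explains the factor of $\log n$ coming from the pointwise crude bound $|\log\xi_{(k),1}| \lesssim \log n + |\log f(x)| + $ (tail in $\|X\|$).
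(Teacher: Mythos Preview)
Your proposal follows the same route as the paper's proof: split over $\mathcal{X}_n$ and $\mathcal{X}_n^c$, handle the tail via the pointwise bound on $|\log u_{x,s}|$ coming from Lemma~\ref{Lemma:hxinvbounds}(i) (this is the content of~\eqref{Eq:LotsofTerms}, not Proposition~\ref{Prop:Moment}) combined with H\"older and $\mu_\alpha(f)<\infty$, truncate $s$ at $a_n/(n-1)$ using the Beta tail bound, and on $\mathcal{X}_n \times [0,a_n/(n-1)]$ invert the local expansion $|h_x(r)-V_df(x)r^d| \leq \frac{dV_d}{d+\tilde\beta}C_{n,\tilde\beta}(x)r^{d+\tilde\beta}$ to obtain the analogue of~\eqref{Eq:Simplerhxinv} before integrating $s^{\tilde\beta/d}$ against the Beta density. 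The only wrinkle is that $|\log u_{x,s}|$ is not uniformly $O(\log n)$ --- its upper bound carries a $\log(1+\|x\|)$ term --- but, as you acknowledge at the end, after integrating against $f(x)$ this is harmless, and the paper handles it in exactly this way.
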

To aid interpretation of Proposition~\ref{Prop:WeakCond}, we first remark that if $f \in \mathcal{F}_{d,\theta}$ for some $\theta = (\alpha,\beta,\gamma,\nu,a) \in \Theta$, then~\eqref{Eq:WeakCond} holds, with $\mathcal{X}_n := \{x \in \mathcal{X}:f(x) \geq \delta_n\}$, where $\delta_n$ is defined in~\eqref{Eq:deltan} below.  On the other hand, if $f \notin \mathcal{F}_{d,\theta}$, we may still be able to obtain explicit bounds on the terms in~\eqref{Eq:WeakBias} on a case-by-case basis, as in the following example. 
\begin{eg}
\label{Eg:Gamma}
For $a > 1$, consider $f(x) = \Gamma(a)^{-1}x^{a-1}e^{-x}\mathbbm{1}_{\{x > 0\}}$, the density of the $\Gamma(a,1)$ distribution.  Then for any $\tau \in (0,1)$ small enough, we may take
\[
\mathcal{X}_n = \biggl[\Bigl(\frac{k}{n}\Bigr)^{\frac{1}{a}-\tau},(1-\tau)\log \frac{n}{k}\biggr]
\]
to deduce from Proposition~\ref{Prop:WeakCond} that for every $\epsilon > 0$,
\[
\mathbb{E}_f(\hat{H}_n) - H = o\Bigl(\frac{k^{1-\epsilon}}{n^{1-\epsilon}}\Bigr),
\]
uniformly for $k \in \{1,\ldots,k^*\}$. 
\end{eg}
Similar calculations show that the bias is of the same order for $\mathrm{Beta}(a,b)$ distributions with $a,b > 1$.  

\subsection{Asymptotic variance and normality}
\label{Sec:Var}

We now study the asymptotic variance of Kozachenko--Leonenko estimators under the assumption that the tuning parameter $k$ is diverging with $n$; the fixed $k$ case is deferred to the next subsection.
\begin{lemma}
\label{varthm}
Let $\theta = (\alpha,\beta,\gamma,\nu,a) \in \Theta$ with $\alpha > d$ and $\beta > 0$.  Let $k_0^*=k_{0,n}^*$ and $k_1^*=k_{1,n}^*$ denote any two deterministic sequences of positive integers with $k_0^* \leq k_1^*$, with $k_0^* / \log^5 n \rightarrow \infty$ and with $k_1^*=O(n^{\tau_1})$, where $\tau_1$ satisfies the condition in Theorem~\ref{unifweightedclt}.  Then for any $w = w^{(k)} \in \mathcal{W}^{(k)}$ with $\sup_{k \geq k_d} \|w^{(k)}\| < \infty$, we have
\[
\sup_{k \in \{k_0^*, \ldots, k_1^*\}}\sup_{f \in \mathcal{F}_{d,\theta}} \bigl|n\mathrm{Var}_f \hat{H}_n^w - V(f)\bigr| \rightarrow 0
\]
as $n \rightarrow \infty$. 
\end{lemma}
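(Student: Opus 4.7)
By exchangeability of $(X_1,\ldots,X_n)$, writing $T_i := \sum_{j=1}^k w_j \log \xi_{(j),i}$, one has
\[
n\mathrm{Var}_f(\hat{H}_n^w) = \mathrm{Var}_f(T_1) + (n-1)\mathrm{Cov}_f(T_1,T_2),
\]
and the plan is to show $\mathrm{Var}_f(T_1) \to V(f)$ and $(n-1)\mathrm{Cov}_f(T_1,T_2) \to 0$, both uniformly in $f \in \mathcal{F}_{d,\theta}$ and in $k \in \{k_0^*,\ldots,k_1^*\}$.

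For the first convergence, I would apply the law of total variance. Extending the conditional-bias calculation behind Lemma~\ref{Lemma:Bias} to the second moment yields that $\mathbb{E}_f(T_1 \mid X_1) \to -\log f(X_1)$ in $L^2(\mathbb{P}_f)$, and hence $\mathrm{Var}_f[\mathbb{E}_f(T_1 \mid X_1)] \to \mathrm{Var}_f[\log f(X_1)] = V(f)$. For the residual term $\mathbb{E}_f[\mathrm{Var}_f(T_1 \mid X_1)]$, the local Gamma-limit heuristic of Section~\ref{Sec:BiasVariance} implies that, conditional on $X_1 = x$, the vector $\bigl(V_d(n-1)\rho_{(j),1}^d f(x)\bigr)_{j=1}^k$ is close in distribution and in moments to $(G_j)_{j=1}^k$ with $G_j := E_1+\cdots+E_j$ and $E_1,E_2,\ldots$ i.i.d.\ $\mathrm{Exp}(1)$. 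Since $G_j/G_{j'} \sim \mathrm{Beta}(j,j'-j)$ is independent of $G_{j'}$ for $j < j'$, one has $\mathrm{Cov}(\log G_j,\log G_{j'}) = \Psi'(\max(j,j'))$, and writing $S_s := \sum_{j \leq s} w_j$, Abel summation using $\Psi'(s)-\Psi'(s+1) = 1/s^2$ gives
\[
\mathrm{Var}\Bigl(\sum_{j=1}^k w_j \log G_j\Bigr) = \sum_{s=1}^{\infty}\frac{S_s^2}{s^2} = \Psi'(k) + \sum_{s=\lfloor k/d\rfloor}^{k-1}\frac{S_s^2}{s^2}.
\]
The crucial point is that $w \in \mathcal{W}^{(k)}$ is supported on the $d$-element set $\{\lfloor \ell k/d\rfloor : \ell = 1, \ldots, d\}$, so $|S_s| \leq \|w\|_1 \leq \sqrt{d}\,\|w\|$ is uniformly bounded; combined with $\sum_{s \geq \lfloor k/d\rfloor} 1/s^2 = O(1/k)$, the right-hand side above is $O(1/k)$, which gives $\mathbb{E}_f[\mathrm{Var}_f(T_1 \mid X_1)] \to 0$.

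For the covariance term I would condition on $(X_1, X_2)$ and decompose according to whether $B_{X_1}(\rho_{(k),1})$ and $B_{X_2}(\rho_{(k),2})$ are disjoint. On the disjoint event, the $k$ nearest neighbours of $X_1$ and those of $X_2$ are drawn from disjoint subsets of $\{X_3, \ldots, X_n\}$, so the local Gamma-limit approximation can be run in each window independently; after centering, the conditional covariance on this event vanishes faster than $1/n$, provided one also controls the discrepancy between $\mathbb{E}_f(T_1 \mid X_1, X_2)$ and $\mathbb{E}_f(T_1 \mid X_1)$, which on typical events is itself $O(1/n)$ because conditioning on a single point $X_2$ far from $X_1$ perturbs the empirical measure near $X_1$ by an amount of that order. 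On the overlap event, which has probability $O(k/n)$ uniformly over $\mathcal{F}_{d,\theta}$ thanks to $\|f\|_\infty \leq \gamma$, I would apply H\"older's inequality together with uniform $L^p$ control of $T_1, T_2$ for some $p > 2$ — obtainable from the moment condition $\mu_\alpha(f) \leq \nu$ with $\alpha > d$ and Beta-concentration tail bounds on $\log \rho_{(j),i}$ — to push the contribution below $1/n$.

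The main obstacle is obtaining the $o(1/n)$ rate for the covariance: Cauchy--Schwarz alone yields only $|\mathrm{Cov}_f(T_1, T_2)| = O(1)$, and bounding the overlap event by its probability times a moment bound gives $O(k/n)$ rather than $o(1/n)$. Closing the gap requires showing that the conditional covariance on the disjoint event is genuinely $o(1/n)$, which amounts to carrying out the two-window local Gamma-limit approximation with explicit rate control; this is where the growth-rate constraint $k_1^* = O(n^{\tau_1})$ enters the argument. Throughout, uniformity over $f \in \mathcal{F}_{d,\theta}$ and $k \in \{k_0^*, \ldots, k_1^*\}$ is preserved because each quantitative estimate depends on $f$ only through $\theta$ and on $k$ only through its position in the allowed range.
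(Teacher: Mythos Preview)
Your decomposition $n\mathrm{Var}_f(\hat{H}_n^w) = \mathrm{Var}_f(T_1) + (n-1)\mathrm{Cov}_f(T_1,T_2)$ and the treatment of $\mathrm{Var}_f(T_1)$ are along the right lines; the Gamma-limit heuristic for $\mathbb{E}_f[\mathrm{Var}_f(T_1\mid X_1)] = O(1/k)$ is essentially what underlies the $\Psi'(k)$ term in the paper's expansion~\eqref{Eq:LongDisplay}, and your Abel-summation identity $\mathrm{Var}\bigl(\sum_j w_j\log G_j\bigr)=\sum_s S_s^2/s^2$ is a clean way to handle the weighted case (the paper does the analogous step by direct Dirichlet integration in~\eqref{eq:weighteddiagonal}).

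There is, however, a genuine gap in the covariance analysis, which you partly flag yourself. Your plan is to bound the disjoint-balls and overlapping-balls contributions to $\mathrm{Cov}_f(T_1,T_2)$ separately, each by $o(1/n)$. Neither bound holds. On the disjoint event the counts of points in $B_{X_1}(r_{n,u})$ and $B_{X_2}(r_{n,v})$ are multinomial rather than independent binomial, and this negative dependence contributes exactly $-1/n + o(1/n)$ to the covariance (the paper's $W_2$ calculation~\eqref{Eq:W21}, via the total-variation bound~\eqref{Eq:Pinsker} between the Dirichlet and product-Beta laws). The overlap contribution is $+e^{\Psi(k)}/\{k(n-1)\} = 1/n + O\bigl(1/(nk)\bigr)$, so only the \emph{sum} is $o(1/n)$; see~\eqref{Eq:LongerDisplay}. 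Your H\"older argument on the overlap event cannot close this: with $\mathbb{P}(A)\asymp k/n$ and $\|T_i\|_p=O(1)$, H\"older yields at best $O\bigl((k/n)^{1-2/p}\bigr)$, which is never $o(1/n)$ for $k\geq 1$ and any finite $p$.

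The missing ingredient is that the overlap term must be \emph{computed} to leading order, not merely bounded. In the paper this is the $W_4$ analysis: a bivariate normal approximation to the multinomial vector $(N_1+N_3,N_2+N_3)$, valid precisely because $k\to\infty$. For fixed $k$ the cancellation is incomplete and $n\mathrm{Var}_f\hat{H}_n - V(f)$ converges to the strictly positive constant in~\eqref{Eq:AsympInf} (cf.\ Table~\ref{Table:Sim}), so any argument that does not at some point invoke a joint CLT-type step for the overlapping configuration cannot succeed. Your local Gamma limit is the right tool for the marginal of $\xi_1$ given $X_1$, but it does not capture the joint law of $(\xi_1,\xi_2)$ when $B_{X_1}(r_{n,u})\cap B_{X_2}(r_{n,v})\neq\emptyset$.
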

The proof of this lemma is lengthy, and involves many delicate error bounds, so we outline the main ideas in the unweighted case here.  First, we argue that 
\begin{align*}
	\Var \hat{H}_n &= n^{-1} \Var \log \xi_1 + (1-n^{-1}) \Cov( \log \xi_1, \log \xi_2) \\
	&= n^{-1} V(f) + \Cov\bigl( \log (\xi_1 f(X_1)), \log (\xi_2 f(X_2))\bigr) + o(n^{-1}), 
\end{align*}
where we hope to exploit the fact that $\xi_1 f(X_1) \stackrel{p}{\rightarrow} 1$.  The main difficulties in the argument are caused by the fact that handling the covariance above requires us to study the joint distribution of $(\xi_1,\xi_2)$, and this is complicated by the fact that $X_2$ may be one of the $k$ nearest neighbours of $X_1$ or vice versa, and more generally, $X_1$ and $X_2$ may have some of their $k$ nearest neighbours in common.  Dealing carefully with the different possible events requires us to consider separately the cases where $f(X_1)$ is small and large, as well as the proximity of $X_2$ to $X_1$.  Finally, however, we can apply a normal approximation to the relevant multinomial distribution (which requires that $k \rightarrow \infty$) to deduce the result.  We remark that under stronger conditions on $k$, it should also be possible to derive the same conclusion about the asymptotic variance of $\hat{H}_n$ while only assuming similar conditions on the density to those required in Proposition~\ref{Prop:WeakCond}, but we do not pursue this here.

\subsection{Fixed $k$}

A crucial step in the proof of Lemma~\ref{varthm} is the normal approximation to a certain multinomial distribution (cf.\ the bound on the term $W_4$).  This normal approximation is only valid when $k \rightarrow \infty$ as $n \rightarrow \infty$.  In this subsection, we present evidence to suggest that, when $k$ is fixed (i.e.\ not depending on $n$), then Kozachenko--Leonenko estimators are inefficient.  For simplicity, we focus on the unweighted version of estimator.

Define the functions
\[
	\alpha_r(s,t) := \frac{1}{V_d} \mu_d \bigl( B_0(s^{1/d}) \cap B_{r^{1/d}e_1}(t^{1/d}) \bigr),
\]
where $e_1=(1,0, \ldots, 0)$ is the first element of the standard basis for $\mathbb{R}^d$ and $\mu_d$ denotes Lebesgue measure on $\mathbb{R}^d$. Also define the functions $T_k$ on $[0,\infty)^3$ by
\begin{align*}
	T_k(r,s,t) := e^{\alpha_r(s,t)} \sum_{\ell=0}^{L(r,s,t)} \sum_{i=0}^{I(r,s)-\ell} \sum_{j=0}^{J(r,t)-\ell} &\frac{\{s-\alpha_r(s,t)\}^i \{t-\alpha_r(s,t)\}^j \alpha_r^\ell(s,t)}{i! j! \ell!} \\
	& - \sum_{i=0}^{I(r,s)} \sum_{j=0}^{J(r,t)} \frac{s^it^j}{i! j!},
\end{align*}
where $L(r,s,t):=k-1-\mathbbm{1}_{\{r < \max(s,t)\}}$, $I(r,s):=k-1-\mathbbm{1}_{\{r<s\}}$, $J(r,t):=k-1-\mathbbm{1}_{\{r<t\}}$.

In the case $k=1$, this function appears in~\citet{DelattreFournier:16}, where the authors show that, under certain regularity conditions,
\[
	\lim_{n \rightarrow \infty} n \Var \hat{H}_n - V(f) = \Psi'(1) + \int_{[0,\infty)^3} e^{-s-t} \frac{T_1(r,s,t)}{st} \,dr \,ds \,dt - 1 + 2 \log 2.
\]
More generally, Poisson approximation to the same multinomial distribution mentioned above, together with analysis similar to the proof of Lemma~\ref{varthm}, suggests that for (fixed) $k \geq 2$,
\begin{align}
\label{Eq:AsympInf}
	&\lim_{n \rightarrow \infty} n \Var \hat{H}_n - V(f) = \,  \Psi'(k) + \int_{[0,\infty)^3} e^{-s-t} \frac{T_k(r,s,t)}{st} \,dr \,ds \,dt -1 \nonumber \\
	&\hspace{0.8cm}+ 2^{-(2k-2)} \binom{2k-2}{k-1} \{ \Psi(2k-1)-\Psi(k)-\log2\} \nonumber \\
	&\hspace{0.8cm}+ \frac{1}{k-1} \sum_{j=0}^{k-2} 2^{-k-j} \binom{k+j-1}{j} [1- (k-j) \{ \Psi(k+j)- \log2 - \Psi(k) \}].
\end{align}
Here, the $\Psi'(k)$ term arises as in~\eqref{Eq:LongDisplay}, the integral term arises from the Poisson approximation, the $-1$ arises as in~\eqref{Eq:LongerDisplay}, and the remaining terms come from the fact that $X_1$ can be one of the $k$ nearest neighbours of $X_2$, or vice-versa, which induces a singular component into the joint distribution function $F_{n,x,y}$ of $(\xi_1,\xi_2)$ given $(X_1,X_2)=(x,y)$.  It is interesting to observe that this asymptotic inflation of the variance is distribution-free; in Table~\ref{Table:Sim}, we tabulate numerical values for~\eqref{Eq:AsympInf} for a few values of $d$ and $k$.  These agree with those obtained by \citet{DelattreFournier:16} for the case $k=1$.

\begin{table}
\begin{center}
\begin{tabular}{ |c|c|c|c|c|c|c| } 
 \hline
$d \backslash k$ & $1$ & $2$ & $3$ & $4$ & $5$ \\ 
\hline
 $1$ & $2.14$ & $0.97$ & $0.64$ & $0.48$ & $0.39$ \\ 
\hline
 $2$ & $2.29 $ &$1.01$ & $0.64$ & $0.47$ & $0.38$ \\ 
 \hline
$3$ & $2.42$ & $ 1.03$ & $0.64$ & $0.47$ & $0.37$\\
\hline
$5$ & $2.61$& $1.05$ & $0.65$ & $0.47$ & $0.37$\\
\hline
$10$ & $2.85$ &$1.10$ & $0.68$ & $0.50$ & $0.40$ \\
\hline
\end{tabular}
\end{center}
\caption{\label{Table:Sim}Asymptotic variance inflation~\eqref{Eq:AsympInf} of the Kozachenko--Leonenko estimator for fixed $k$.} 
\end{table}


\section{Lower bounds}
\label{Sec:LowerBound}

In this section, we address the optimality in a local asymptotic minimax sense of the limiting normalised risk $V(f)$ given in Theorem~\ref{unifweightedclt} using ideas of semiparametric efficiency \citep[e.g.][Chapter~25]{vanderVaart1998}.  For $f \in \mathcal{F}_{d,\theta}$, $t \geq 0$ and a Borel measurable function $g:\mathbb{R}^d \rightarrow \mathbb{R}$, define $f_{t,g}:\mathbb{R}^d \rightarrow [0,\infty)$ by
\begin{equation}
\label{Eq:Paths}
f_{t,g}(x) := \frac{2c(t)}{1 + e^{-2tg(x)}}f(x),
\end{equation}
where $c(t) := \bigl(\int_{\mathbb{R}^d}  \frac{2}{1 + e^{-2tg(x)}}f(x) \, dx\bigr)^{-1}$.  This definition ensures that $\{f_{t,g}:t \geq 0\}$ is differentiable in quadratic mean at $t=0$ with score function $g$ \citep[e.g.][Example~25.16]{vanderVaart1998}.  We say $(\tilde{H}_n)$ is an estimator sequence if $\tilde{H}_n: (\mathbb{R}^d)^{\times n} \rightarrow \mathbb{R}$ is a measurable function for each $n \in \mathbb{N}$.
\begin{thm}
\label{Thm:LowerBound}
Fix $d \in \mathbb{N}$, $\theta = (\alpha,\beta,\gamma,\mu,a) \in \Theta$ and $f \in \mathcal{F}_{d,\theta}$.  For $\lambda \in \mathbb{R}$, let $g_\lambda := \lambda\{\log f + H(f)\}$.  Then, writing $\mathcal{I}$ for the set of finite subsets of $\mathbb{R}$, we have for any estimator sequence $(\tilde{H}_n)$ that
\begin{equation}
\label{Eq:LowerBound}
\sup_{I \in \mathcal{I}} \liminf_{n \rightarrow \infty} \max_{\lambda \in I} n \mathbb{E}_{f_{n^{-1/2},g_\lambda}} \bigl[\bigl\{\tilde{H}_n-H(f_{n^{-1/2},g_\lambda})\bigr\}^2\bigr] \geq V(f).
\end{equation}
Moreover, whenever $t|\lambda| \leq \min(1,\{144V(f)\}^{-1/2})$, we have $f_{t,g_\lambda} \in \mathcal{F}_{d,\tilde{\theta}}$, where $\tilde{\theta} := \bigl(\alpha,\beta,4\gamma,4\mu, \tilde{a}\bigr) \in \Theta$, and $\tilde{a} \in \mathcal{A}$ is defined in~\eqref{Eq:atilde} in the online supplement.
\end{thm}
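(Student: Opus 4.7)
The plan is to obtain~\eqref{Eq:LowerBound} from the one-dimensional local asymptotic minimax theorem (e.g.\ van der Vaart, 1998, Theorem~25.21) applied to the submodel $\{f_{t,g_\lambda}:t\geq 0\}$, for which differentiability in quadratic mean at $t=0$ with score $g_\lambda$ is already cited. The key remaining input is the derivative of $H$ along the path. Since $\int(\log f + H(f)) f\, dx = 0$, we have $\int g_\lambda f = 0$, so differentiating $H(f_{t,g_\lambda}) = -\int f_{t,g_\lambda}\log f_{t,g_\lambda}$ at $t=0$, with the interchange of limit and integral justified by the uniform boundedness of $f_{t,g_\lambda}$ established below together with $V(f)<\infty$, yields
\[
\kappa_\lambda'(0) := \tfrac{d}{dt}H(f_{t,g_\lambda})\Big|_{t=0} = -\!\!\int g_\lambda \log f \cdot f\, dx = -\lambda V(f).
\]
The Fisher information of the submodel at $t=0$ is $\int g_\lambda^2 f = \lambda^2 V(f)$, so the LAM theorem applied to the bowl-shaped squared-error loss gives, for each $\lambda\neq 0$, the asymptotic lower bound $\kappa_\lambda'(0)^2/(\lambda^2 V(f)) = V(f)$. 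Restricting the supremum over finite $I\subset\mathbb{R}$ to those $I$ containing a fixed non-zero $\lambda_0$ then yields~\eqref{Eq:LowerBound}.

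To verify that $f_{t,g_\lambda}\in\mathcal{F}_{d,\tilde{\theta}}$ in the stated range, I would first control the normalising constant. Using the identity $2/(1+e^{-2z}) = 1+\tanh z$, the bound $|\tanh z|\le|z|$, and the Cauchy--Schwarz estimate $\int |\log f + H(f)| f \le V(f)^{1/2}$, we obtain
\[
\bigl|c(t)^{-1}-1\bigr| \;\le\; \int|\tanh(tg_\lambda)|\,f\,dx \;\le\; t|\lambda|\,V(f)^{1/2} \;\le\; \tfrac{1}{12}
\]
under the hypothesis $t|\lambda|\le \{144 V(f)\}^{-1/2}$, so $c(t)\in[12/13,12/11]$ and in particular $2c(t)\le 24/11 < 4$. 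The bounds $\|f_{t,g_\lambda}\|_\infty \le 2c(t)\gamma \le 4\gamma$ and $\mu_\alpha(f_{t,g_\lambda}) \le 2c(t)\mu_\alpha(f) \le 4\nu$ then follow immediately. The two-sided pointwise comparison $c(t)\,e^{-2|tg_\lambda(x)|}\,f(x) \le f_{t,g_\lambda}(x) \le 2c(t)\,f(x)$, which follows directly from the form of the tilt, will be used to relate super-level sets $\{f_{t,g_\lambda}\ge\delta\}$ to super-level sets of $f$ at a comparable threshold, so that the smoothness information for $f$ can be transferred to $f_{t,g_\lambda}$.

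The main obstacle is verifying the smoothness bound for $f_{t,g_\lambda}$, which is where the explicit definition of $\tilde a$ in~\eqref{Eq:atilde} emerges. Writing $f_{t,g_\lambda} = \psi_t \cdot f$ with $\psi_t(x) := 2c(t)/\{1+e^{-2tg_\lambda(x)}\}$, I would apply the Leibniz rule to express partial derivatives of $f_{t,g_\lambda}$ up to order $m=\lceil\beta\rceil-1$ as linear combinations of products of derivatives of $\psi_t$ and of $f$. Derivatives of $\psi_t$ reduce, via the chain rule, to polynomial expressions in $\tanh(tg_\lambda)$ and in the mixed partial derivatives of $\log f$, the latter being rational combinations of $f^{(j)}(x)/f(x)$ that are bounded by polynomials in $a(f(x))$ under the hypothesis $f\in\mathcal{F}_{d,\theta}$. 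After dividing by $f_{t,g_\lambda}(x)$ and invoking the two-sided comparison from the previous paragraph, each contribution admits a bound polynomial in $a(\delta)$ (after possibly shrinking the smoothness radius by a constant dimensional factor, which is the source of the inflated function $\tilde a$). The Hölder-type piece, namely the bound on $\|f_{t,g_\lambda}^{(m)}(y)-f_{t,g_\lambda}^{(m)}(x)\|/(f_{t,g_\lambda}(x)\|y-x\|^{\beta-m})$ on $B_x^\circ(r_{\tilde a}(x))$, is the most delicate step: each product term produced by Leibniz must be decomposed telescopically so that each factor becomes either a Hölder difference controlled by the original $a(f(x))$ hypothesis or a lower-order difference controlled by mean-value bounds derived from the same hypothesis, and the resulting sum, assembled via the triangle inequality while tracking the dependence on $a(f(x))$ throughout, pins down $\tilde a\in\mathcal A$ explicitly.
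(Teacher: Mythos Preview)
Your approach is essentially the paper's: invoke van der Vaart's Theorem~25.21 after verifying pathwise differentiability of $H$ along $\{f_{t,g_\lambda}\}$ with efficient influence function $-\log f-H(f)$, then establish $f_{t,g_\lambda}\in\mathcal{F}_{d,\tilde\theta}$ by bounding $c(t)$ and controlling derivatives of the tilt via Leibniz and chain rule. Two remarks on details: your first-order bound $|c(t)^{-1}-1|\le t|\lambda|V(f)^{1/2}$ via $|\tanh z|\le|z|$ is in fact cleaner than the paper's second-order estimate $|c(t)^{-1}-1|\le 72t^2\int g^2 f$ and yields exactly the same threshold; on the other hand, your one-line justification of the $t$-derivative of $H(f_{t,g_\lambda})$ by ``uniform boundedness together with $V(f)<\infty$'' is too quick, since $\log f$ is unbounded---the paper splits the integral over $A_t=\{x:8t|g(x)|\le 1\}$ and its complement and uses H\"older's inequality with the higher moment bounds $\int f|\log f|^{1/\epsilon}<\infty$ to verify the limit, which is the actual technical content of that half of the proof.
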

The proof of Theorem~\ref{Thm:LowerBound} reveals that, at every $f \in \mathcal{F}_{d,\theta}$, the entropy functional $H$ is differentiable relative to the tangent set $\{g_\lambda:\lambda \in \mathbb{R}\}$ with efficient influence function
\[
\tilde{\psi}_f := - \log f - H(f).
\]
This observation, together with Theorem~\ref{unifweightedclt}, confirms that under the assumptions on $\theta$, $w$ and $k$ in that result, the weighted Kozachenko--Leonenko estimator $\hat{H}_n^w$ is (asymptotically) efficient at $f \in \mathcal{F}_{d,\theta}$ in the sense that
\[
n^{1/2}\{\hat{H}_n^w - H(f)\} = \frac{1}{n^{1/2}}\sum_{i=1}^n \tilde{\psi}_f(X_i) + o_p(1)
\]
\citep[cf.][p.~367]{vanderVaart1998}.  Moreover, the second part of Theorem~\ref{Thm:LowerBound} and Theorem~\ref{unifweightedclt} imply in particular that, under these same conditions on $\theta$, $w$ and $k$, the estimator $\hat{H}_n^w$ attains the local asymptotic minimax lower bound, in the sense that
\[
\sup_{I \in \mathcal{I}} \lim_{n \rightarrow \infty} \max_{\lambda \in I} n \mathbb{E}_{f_{n^{-1/2},g_\lambda}} \bigl[\bigl\{\hat{H}_n^w - H(f_{n^{-1/2},g_\lambda})\bigr\}^2\bigr] = V(f).
\]

\section{Proofs of main results}
\label{sec:proof}
\subsection{Auxiliary results and proofs of Lemma~\ref{Lemma:Bias} and Corollary~\ref{Cor:WeightedBias}}
\label{Sec:biasproof}

Throughout the proofs, we write $a \lesssim b$ to mean that there exists $C > 0$, depending only on $d \in \mathbb{N}$ and $\theta \in \Theta$, such that $a \leq Cb$.  The proof of Lemma~\ref{Lemma:Bias} relies on the following two auxiliary results, whose proofs are given in Appendix~\ref{Appendix:Auxiliary}.
\begin{prop}
\label{Prop:Moment}
Let $\theta = (\alpha,\beta,\gamma,\nu,a) \in \Theta$, $d \in \mathbb{N}$ and $\tau \in \bigl(\frac{d}{\alpha+d},1\bigr]$.  Then
\[
\sup_{f \in \mathcal{F}_{d,\theta}} \int_{\{x:f(x) < \delta\}} a\bigl(f(x)\bigr) f(x)^\tau \, dx \rightarrow 0 
\]
as $\delta \searrow 0$.  Moreover, for every $\rho > 0$,
\[
\sup_{f \in \mathcal{F}_{d,\theta}} \int_{\mathcal{X}} a\bigl(f(x)\bigr)^\rho f(x)^\tau < \infty.
\]
\end{prop}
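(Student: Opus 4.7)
The plan is to reduce both statements to a single ingredient: the claim that for any $\tau' \in \bigl(\frac{d}{\alpha+d},1\bigr]$, the power $f^{\tau'}$ is uniformly integrable over $\mathcal{F}_{d,\theta}$, in the strong sense that its tail integral on $\{\|x\|>R\}$ tends to $0$ uniformly in $f$ as $R \to \infty$. The reduction rests on the observation that since $a$ is decreasing, $\|f\|_\infty \leq \gamma$, and $a(\delta) = o(\delta^{-\epsilon})$ as $\delta \searrow 0$, for every $\epsilon > 0$ there is a constant $C_\epsilon$, depending only on $a$, $\gamma$ and $\epsilon$, with $a(t) \leq C_\epsilon t^{-\epsilon}$ for all $t \in (0,\gamma]$. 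Consequently $a(f(x))^\rho f(x)^\tau \leq C_\epsilon^\rho f(x)^{\tau - \rho\epsilon}$ pointwise on $\mathcal{X}$.

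To prove the uniform integrability, I would factor $f^{\tau'} = \bigl[f(1+\|x\|^\alpha)\bigr]^{\tau'}(1+\|x\|^\alpha)^{-\tau'}$ and apply H\"older's inequality with conjugate exponents $1/\tau'$ and $1/(1-\tau')$:
\[
\int_A f^{\tau'}(x)\,dx \leq \bigl(1 + \mu_\alpha(f)\bigr)^{\tau'} \Bigl(\int_A (1+\|x\|^\alpha)^{-\tau'/(1-\tau')}\,dx\Bigr)^{1-\tau'}.
\]
The hypothesis $\tau' > d/(\alpha+d)$ is equivalent to $\alpha\tau'/(1-\tau') > d$, so the second factor is finite when $A = \mathbb{R}^d$ and, by dominated convergence, vanishes as $R \to \infty$ when $A = \{\|x\|>R\}$. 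Combined with $\mu_\alpha(f) \leq \nu$, this delivers both $\sup_{f \in \mathcal{F}_{d,\theta}} \int f^{\tau'} < \infty$ and the claimed uniform tail decay.

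Given the reduction, the second part of the proposition is immediate: choose $\epsilon > 0$ small enough that $\tau - \rho\epsilon > d/(\alpha+d)$ and apply the displayed inequality with $\tau' = \tau - \rho\epsilon$ on $A = \mathbb{R}^d$. For the first part, I would fix $\epsilon > 0$ with $\tau - \epsilon > d/(\alpha+d)$ and split $\{f<\delta\}$ according to $\|x\| \leq R$ or $\|x\|>R$ to obtain
\[
\int_{\{f<\delta\}} a(f) f^\tau\,dx \leq C_\epsilon \delta^{\tau-\epsilon} V_d R^d + C_\epsilon \int_{\{\|x\|>R\}} f^{\tau-\epsilon}\,dx;
\]
given $\eta > 0$, first pick $R$ large so that the second term is at most $\eta/2$ uniformly in $f$, then pick $\delta$ small so that the first term is at most $\eta/2$. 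I anticipate no substantive obstacle beyond the bookkeeping around the choice of $\epsilon$: the exponent $d/(\alpha+d)$ in the hypothesis is precisely the integrability threshold of $(1+\|x\|^\alpha)^{-\tau'/(1-\tau')}$ on $\mathbb{R}^d$, so the argument is essentially dictated by the moment assumption $\mu_\alpha(f) \leq \nu$ and the growth bound on $a$.
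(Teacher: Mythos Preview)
Your proposal is correct and matches the paper's proof in its essential ingredients: both use the bound $a(t)\le C_\epsilon t^{-\epsilon}$ on $(0,\gamma]$ and the same H\"older factorisation $f^{\tau'}=\bigl[f(1+\|x\|^\alpha)\bigr]^{\tau'}(1+\|x\|^\alpha)^{-\tau'}$ with exponents $1/\tau'$ and $1/(1-\tau')$. For the first part, the paper avoids your $R$-splitting by the slightly slicker device of writing, on $\{f<\delta\}$, $a(f)f^\tau \le A_\epsilon f^{\tau-\epsilon}\le A_\epsilon\,\delta^{\epsilon}f^{\tau-2\epsilon}$ (with $\epsilon$ chosen so that $\tau-2\epsilon>d/(\alpha+d)$), which yields an explicit rate $O(\delta^{\epsilon})$ directly from the uniform bound on $\int f^{\tau-2\epsilon}$ rather than via a tail estimate.
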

Recall the definition of $h_x^{-1}(\cdot)$ in~\eqref{Eq:hxinvdef}.  The first part of Lemma~\ref{Lemma:hxinvbounds} below provides crude but general bounds; the second gives much sharper bounds in a more restricted region.
\begin{lemma}
\label{Lemma:hxinvbounds}
\begin{enumerate}
\item[(i)] Let $f \in \mathcal{F}_d$ and let $\alpha > 0$.  Then for every $s \in (0,1)$ and $x \in \mathbb{R}^d$,
\[
\Bigl(\frac{s}{V_d\|f\|_\infty}\Bigr)^{1/d} \leq h_x^{-1}(s) \leq \|x\| + \Bigl(\frac{\mu_\alpha(f)}{1-s}\Bigr)^{1/\alpha}.
\]
\item[(ii)] Fix $\theta = (\alpha,\beta,\gamma,\nu,a) \in \Theta$, and let $\mathcal{S}_n \subseteq (0,1)$, $\mathcal{X}_n \subseteq \mathbb{R}^d$ be such that 
\[
C_n := \sup_{f \in \mathcal{F}_{d,\theta}} \sup_{s \in \mathcal{S}_n} \sup_{x \in \mathcal{X}_n} \frac{a(f(x))^{d/(1 \wedge \beta)}s}{f(x)} \rightarrow 0.
\]
Then there exists $n_* = n_*(d,\theta) \in \mathbb{N}$ such that for all $n \geq n_*$, $s \in \mathcal{S}_n$, $x \in \mathcal{X}_n$ and $f \in \mathcal{F}_{d,\theta}$, we have
\[
\biggl|V_df(x)h_x^{-1}(s)^d - \sum_{l=0}^{\lceil \beta/2 \rceil -1} b_l(x) s^{1+2l/d} \biggr| \lesssim s \biggl\{ \frac{a(f(x))^{d/(2 \wedge \beta)}s}{f(x)} \biggr\}^{\beta/d},
\]
where $b_0(x)=1$ and $|b_l(x)| \lesssim a(f(x))^lf(x)^{-2l/d}$ for $l \geq 1$.  Moreover, if $\beta > 2$, then
\[
b_1(x) = -\frac{\Delta f(x)}{2(d+2)V_d^{2/d} f(x)^{1+2/d}}.
\]
\end{enumerate}
\end{lemma}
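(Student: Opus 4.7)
\textbf{Proof proposal for Lemma~\ref{Lemma:hxinvbounds}.}

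For part~(i), both bounds are elementary. The lower bound follows by noting $h_x(r) \leq \|f\|_\infty V_d r^d$, so if $h_x(r) \geq s$ then $r \geq (s/(V_d\|f\|_\infty))^{1/d}$. For the upper bound, Markov's inequality gives $\int_{B_0(R)} f \geq 1 - \mu_\alpha(f)/R^\alpha$; choosing $R = (\mu_\alpha(f)/(1-s))^{1/\alpha}$ makes this at least $s$, and since $B_0(R) \subseteq B_x(\|x\|+R)$, we conclude $h_x(\|x\|+R) \geq s$.

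Part~(ii) is the substantive claim. The plan is to Taylor-expand $f$ around $x$ and integrate term-by-term, then invert the resulting series. First I would combine part~(i) with the hypothesis $C_n \to 0$ to verify that for $n$ large enough and for all admissible $(s,x,f)$, the radius $r := h_x^{-1}(s)$ satisfies $r \leq r_a(x)$; this is essentially the content of the condition $C_n$, since crude bounds yield $r \lesssim (s/f(x))^{1/d}$ while $r_a(x) \sim a(f(x))^{-1/(\beta \wedge 1)}$. This lets us apply the $\beta$-H\"older-type control on $f^{(m)}$ built into the definition of $\mathcal{F}_{d,\theta}$ uniformly on $B_x(r)$.

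Next, I would write $f(x+z) = \sum_{t=0}^m \frac{1}{t!}f^{(t)}(x)[z^{\otimes t}] + R_m(x,z)$ and integrate over $B_0(r)$, using spherical symmetry to kill all odd-order terms. This produces
\[
h_x(r) = V_d f(x) r^d \Bigl\{ 1 + \sum_{l=1}^{\lceil \beta/2 \rceil - 1} c_l(x) r^{2l} \Bigr\} + O\bigl(a(f(x))f(x) r^{d+\beta}\bigr),
\]
where each $c_l(x)$ is a constant multiple of a contraction of $f^{(2l)}(x)/f(x)$, so $|c_l(x)| \lesssim a(f(x))^l$, and the explicit computation $\int_{B_0(r)} z_i z_j \, dz = \delta_{ij} V_d r^{d+2}/(d+2)$ yields $c_1(x) = \frac{\Delta f(x)}{2(d+2)f(x)}$. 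Setting $u := V_d f(x) r^d$ converts $r^{2l}$ into $(u/(V_d f(x)))^{2l/d}$, producing a relation of the form $s = u + \sum_{l \geq 1} \tilde{c}_l(x) u^{1+2l/d} + (\text{remainder})$. Inverting this by a bootstrap/induction argument in powers of $s^{2/d}$ gives $u = \sum_{l=0}^{\lceil \beta/2 \rceil - 1} b_l(x) s^{1+2l/d} + (\text{remainder})$, with $b_0 = 1$ and $b_1(x) = -\tilde{c}_1(x) = -\Delta f(x)/\{2(d+2)V_d^{2/d} f(x)^{1+2/d}\}$ as claimed; the growth bound $|b_l(x)| \lesssim a(f(x))^l f(x)^{-2l/d}$ then follows by tracking the ratios of derivatives appearing in the inversion.

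The main obstacle is the bookkeeping in the series inversion: one must simultaneously bound each coefficient $b_l$ in terms of $a(f(x))$ and $f(x)$, and show that the Taylor remainder of size $a(f(x))f(x)r^{d+\beta}$, once expressed in terms of $s$, matches the target bound $s\{a(f(x))^{d/(2 \wedge \beta)}s/f(x)\}^{\beta/d}$. The latter point requires using $r^d \asymp s/(V_d f(x))$ (from the leading order) inside the remainder and keeping track of how $a(f(x))$ powers propagate through the inversion; this is where the exponent $d/(2\wedge\beta)$ (rather than $d/(1\wedge\beta)$) enters, since only a single Taylor step, not a $\beta$-H\"older step at order $\beta \leq 1$, is visible in the remainder.
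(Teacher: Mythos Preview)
Your proposal is correct and follows the same route as the paper: part~(i) is identical, and for part~(ii) both arguments rest on Taylor-expanding $h_x(r)$ (with odd orders vanishing by spherical symmetry) using the $\beta$-H\"older remainder supplied by the class $\mathcal{F}_{d,\theta}$, followed by inversion. The only real difference is in packaging: rather than carrying out a formal series inversion, the paper writes down the candidate $s_{x,\pm y} := \sum_l b_l(x)\, s^{1+2l/d} \pm y$ with $y$ equal to the target error, sets $\rho := \{s_{x,\pm y}/(V_d f(x))\}^{1/d}$, checks directly from $C_n \to 0$ that $\rho \leq r_a(x)$, and then verifies $h_x(\rho) \gtrless s$ from the Taylor expansion of $h_x$. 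This ``guess and verify'' version sidesteps the preliminary bootstrap in your step~1; note that part~(i) alone does \emph{not} give $r \lesssim (s/f(x))^{1/d}$, so to justify $h_x^{-1}(s) \leq r_a(x)$ you would instead observe that the smoothness condition forces $f \geq f(x)/2$ on $B_x(r_a(x))$, whence $h_x(r_a(x)) \geq \tfrac{1}{2}V_d f(x) r_a(x)^d > s$ once $C_n$ is small enough. One further minor point: for $\beta > 2$ the exponent $\beta/(2\wedge\beta) = \beta/2$ on $a(f(x))$ in the error arises from the cross terms generated by the inversion (products of the $\tilde c_l$'s), not from the raw Taylor remainder itself, which only carries a single factor of $a(f(x))$.
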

We are now in a position to prove Lemma~\ref{Lemma:Bias}.
\begin{proof}[Proof of Lemma~\ref{Lemma:Bias}](i)
We initially prove the result in the case $d \geq 3$, $\alpha > 2d/(d-2)$ and $\beta \in (2,4]$, where it suffices to show that 
\begin{align*}
\sup_{f \in \mathcal{F}_{d,\theta}}\biggl| \mathbb{E}_f(\hat{H}_n) - H &+ \frac{\Gamma(k+2/d) \Gamma(n)}{ 2(d+2)V_d^{2/d}\Gamma(k) \Gamma(n+2/d)}\int_\mathcal{X} \frac{\Delta f(x)}{f(x)^{2/d}} \,dx\biggr| \\
&\hspace{4cm}= O \biggl( \max \biggl\{ \frac{k^{\frac{\alpha}{\alpha+d} - \epsilon}}{n^{\frac{\alpha}{\alpha+d} - \epsilon}}\, , \, \frac{k^{\frac{\beta}{d}}}{n^{\frac{\beta}{d}}} \biggr\} \biggr)
\end{align*}
as $n \rightarrow \infty$, uniformly for $k \in \{1,\ldots,k^*\}$.  Fix $f \in \mathcal{F}_{d,\theta}$.  Define $c_n :=a(k/(n-1))^{1/(1 \wedge \beta)}$, let
\begin{equation}
\label{Eq:deltan}
\delta_n := k c_n^d \log^2 (n-1) /(n-1)
\end{equation}
and let $\mathcal{X}_n:=\{x:f(x) \geq \delta_n\}$.  Recall that $a_n := 3(k+1) \log(n-1)$ and 
let
\[
u_{x,s} := \frac{V_d(n-1)h_x^{-1}(s)^d}{e^{\Psi(k)}}.
\]
The proof is based on~\eqref{Eq:dFnxu} and Lemma~\ref{Lemma:hxinvbounds}(ii), which allow us to make the transformation $s=p_{n,x,u}=h_x(r_{n,u})$.  Writing $R_i, \, i=1, \ldots, 5$ for remainder terms to be bounded at the end of the proof, we can write
\begin{align*}
	&\mathbb{E}(\hat{H}_n) = \int_{\mathcal{X}} f(x) \int_0^\infty \log u \, dF_{n,x}(u) \,dx \\
	&= \int_{\mathcal{X}_n} f(x) \int_0^1 \mathrm{B}_{k,n-k}(s)\log u_{x,s} \,ds\,dx +R_1 \\
	& = \int_{\mathcal{X}_n} f(x) \int_0^\frac{a_n}{n-1} \mathrm{B}_{k,n-k}(s)\log u_{x,s} \,ds\,dx +R_1 +R_2 \\
	& = \int_{\mathcal{X}_n} \! \! f(x) \int_0^{ \frac{a_n}{n-1}} \Bigl\{ \log \Bigl( \frac{(n-1)s}{e^{\Psi(k)}f(x)} \Bigr) \\
	& \hspace{100pt} -\frac{V_d^{-2/d}s^{2/d} \Delta f(x)}{2(d+2)f(x)^{1+2/d}} \Bigr\} \mathrm{B}_{k,n-k}(s) \,ds \,dx + \sum_{i=1}^3 R_i \\
	& = \int_{\mathcal{X}_n} f(x) \biggl\{ \log \biggl( \frac{n-1}{f(x)} \biggr)  - \Psi(n) - \frac{ V_d^{-2/d}\mathrm{B}_{k+2/d,n-k} \, \Delta f(x)}{2(d+2)\mathrm{B}_{k,n-k} \, f(x)^{1+2/d}} \biggr\} \,dx + \sum_{i=1}^4 R_i \\
	&  = H \! + \! \log(n-1) \! - \! \Psi(n) - \frac{V_d^{-2/d}\Gamma(k+2/d)\Gamma(n)}{2(d+2)\Gamma(k)\Gamma(n+2/d)} \int_{\mathcal{X}_n} \! \frac{\Delta f(x)}{f(x)^{2/d}} \,dx+ \sum_{i=1}^5 R_i.
\end{align*}
After multiplying the integrand by an appropriate positive power of $\delta_n/f(x)$, the first part of Proposition~\ref{Prop:Moment} tells us that for every $\epsilon > 0$,
\[
	\sup_{k \in \{1,\ldots,k^*\}} \frac{k^{2/d}}{n^{2/d}}\sup_{f \in \mathcal{F}_{d,\theta}} \int_{\mathcal{X}_n^c} \frac{\Delta f(x)}{f(x)^{2/d}} \,dx = O \biggl( \frac{k^{\frac{\alpha}{\alpha+d} - \epsilon}}{n^{\frac{\alpha}{\alpha+d} - \epsilon}}\biggr)
\]
as $n \rightarrow \infty$.  Since $\log(n-1)- \Psi(n) = O(1/n)$, it now remains to bound $R_1, \ldots, R_5$.  Henceforth, to save repetition, we adopt without further mention the convention that whenever an error term inside $O(\cdot)$ or $o(\cdot)$ depends on $k$, this error is uniform for $k \in \{1,\ldots,k^*\}$; thus $g(n,k) = h(n,k) + o(1)$ as $n \rightarrow \infty$ means $\sup_{k \in \{1,\ldots,k^*\}}|g(n,k) - h(n,k)| \rightarrow 0$ as $n \rightarrow \infty$.

\medskip

\emph{To bound $R_1$}. By Lemma~\ref{Lemma:hxinvbounds}(i), we have $V_d^\alpha \mu_\alpha(f)^d \|f\|_\infty^\alpha \geq \alpha^\alpha d^d/(\alpha+d)^{\alpha+d}$.  Hence
\begin{align}
\label{Eq:LotsofTerms}
|&\log u_{x,s}| \leq \log(n-1) +|\Psi(k)| - \log s + |\log \|f\|_\infty| + |\log V_d| \nonumber \\ 
&\hspace{2cm}+ \frac{d}{\alpha}|\log \mu_\alpha(f)| - \frac{d}{\alpha}\log(1-s) + d\log\biggl(1+\frac{\|x\|}{\mu_\alpha^{1/\alpha}(f)}\biggr) \nonumber \\
&\leq \log(n-1) +  |\Psi(k)| - \log s + \max\biggl\{\log \gamma \, , \, \frac{1}{\alpha} \log\biggl(\frac{V_d^\alpha\nu^d(\alpha+d)^{\alpha+d}}{\alpha^\alpha d^d}\biggr)\biggr\} \nonumber \\
&\hspace{0.5cm}+ |\log V_d| + \frac{d}{\alpha}\max\biggl\{\log \nu \, , \, \frac{1}{d}\log\biggl(\frac{V_d^\alpha  \gamma^\alpha (\alpha+d)^{\alpha+d}}{\alpha^\alpha d^d}\biggr)\biggr\} - \frac{d}{\alpha}\log(1-s) \nonumber \\
&\hspace{4cm}+ d\log\biggl(1+\frac{\|x\|(\alpha+d)^{\frac{1}{\alpha} + \frac{1}{d}}V_d^{1/d}\gamma^{1/d}}{\alpha^{1/d}d^{1/\alpha}}\biggr).
\end{align}
Moreover, for any $C_0, C_1 \geq 0, \epsilon \in (0,\alpha)$ and $\epsilon' \in (0,\epsilon)$, we have by H\"older's inequality that
\begin{align*}
	\sup_{f \in \mathcal{F}_{d,\theta}} \int_{\mathcal{X}_n^c}f(x)&\bigl\{C_0 + \log(1+C_1\|x\|)\bigr\} \, dx \nonumber \\
&\leq \delta_n^{\frac{\alpha-\epsilon'}{\alpha+d}} \sup_{f \in \mathcal{F}_{d,\theta}}\int_{\mathcal{X}} f(x)^{\frac{d+\epsilon'}{\alpha+d}} \bigl\{C_0+\log(1+C_1\|x\|)\bigr\} \, dx \nonumber \\
	& \leq \delta_n^{\frac{\alpha-\epsilon'}{\alpha+d}}(1+\nu)^\frac{d+\epsilon'}{\alpha+d} \biggl[ \int_{\mathbb{R}^d} \frac{\bigl\{C_0 + \log(1+C_1 \|x\| )\bigr\}^{\frac{\alpha+d}{\alpha-\epsilon'}}}{(1+\|x\|^\alpha)^\frac{d+\epsilon'}{\alpha-\epsilon'}} \,dx \biggr]^{\frac{\alpha-\epsilon'}{\alpha+d}} \nonumber \\
&= o \biggl( \frac{k^{\frac{\alpha}{\alpha+d}-\epsilon}}{n^{\frac{\alpha}{\alpha+d}-\epsilon}} \biggr).
\end{align*}
Since $|\mathbb{E}(\log \mathrm{B})| = \Psi(a+b) - \Psi(a)$ when $\mathrm{B} \sim \mathrm{Beta}(a,b)$, we deduce that for each $\epsilon > 0$,
\[
R_1 = \int_{\mathcal{X}_n^c} f(x) \int_0^1 \mathrm{B}_{k,n-k}(s) \log u_{x,s} \,ds \,dx = o \biggl( \frac{k^{\frac{\alpha}{\alpha+d}-\epsilon}}{n^{\frac{\alpha}{\alpha+d}-\epsilon}} \biggr)
\]
as $n \rightarrow \infty$, uniformly for $f \in \mathcal{F}_{d,\theta}$.  

\medskip

\emph{To bound $R_2$.} For random variables $\mathrm{B}_1 \sim \text{Beta}(k,n-k)$ and $B_2 \sim \text{Bin}\bigl(n-1,a_n/(n-1)\bigr)$ we have that for every $\epsilon >0$,
\begin{equation}
\label{Eq:betatail}
	\mathbb{P}\bigl(\mathrm{B}_1 \geq a_n/(n-1)\bigr) = \mathbb{P}(B_2 \leq k-1) \leq \exp\biggl(-\frac{(a_n-k+1)^2}{2a_n}\biggr) = o( n^{-(3- \epsilon)} ),
\end{equation}
where the inequality follows from standard bounds on the left-hand tail of the binomial distribution (see, e.g.\ \citet{Shorack:09}, Equation~(6), page 440).  Now, for any $C_1 > 0$, we have $\alpha \log(1+C_1\|x\|) \leq (1+C_1\|x\|)^\alpha-1$, so that $\sup_{f \in \mathcal{F}_{d,\theta}} \int_{\mathcal{X}} f(x) \log(1+C_1\|x\|) \, dx < \infty$.  Moreover,
\[
-\int_{\frac{a_n}{n-1}}^1 \log(1-s)\mathrm{B}_{k,n-k}(s) \, ds \leq \frac{n-1}{n-k-1} \int_{\frac{a_n}{n-1}}^1 \mathrm{B}_{k,n-k-1}(s) \, ds = o(n^{-(3-\epsilon)}),
\]
for every $\epsilon > 0$, by a virtually identical argument to~\eqref{Eq:betatail}.  We therefore deduce from these facts and~\eqref{Eq:LotsofTerms} that for each $\epsilon>0$,
\begin{equation}
\label{Eq:R2bound}
	R_2= \int_{\mathcal{X}_n} f(x) \int_\frac{a_n}{n-1}^1 \mathrm{B}_{k,n-k}(s) \log u_{x,s} \,ds \,dx= o(n^{-(3-\epsilon)}),
\end{equation}
which again holds uniformly in $f \in \mathcal{F}_{d,\theta}$. 

\medskip

\emph{To bound $R_3$.} We can write
\begin{align*}
	R_3 
	& = \int_{\mathcal{X}_n} f(x) \int_0^\frac{a_n}{n-1} \biggl[ \biggl\{ \log \biggl(\frac{V_d f(x) h_x^{-1}(s)^d}{s} \biggr) - \frac{V_d f(x) h_x^{-1}(s)^d -s}{s} \biggl\} \\
	& \hspace{50pt} + \biggl\{ \frac{V_d f(x) h_x^{-1}(s)^d -s}{s}+ \frac{V_d^{-2/d}s^{2/d} \Delta f(x)}{2(d+2) f(x)^{1+2/d}} \biggr\} \biggr] \mathrm{B}_{k,n-k}(s) \, ds \,dx \\
	&=: R_{31}+R_{32},
\end{align*}
say.  Now, note that 
\[
\sup_{k \in \{1,\ldots,k^*\}} \sup_{f \in \mathcal{F}_{d,\theta}} \sup_{s \in (0,a_n/(n-1)]} \sup_{x \in \mathcal{X}_n} \frac{a(f(x))^d s}{f(x)} \leq \frac{6}{\log(n-1)} \rightarrow 0.
\]
It follows by Lemma~\ref{Lemma:hxinvbounds}(ii) that there exist a constant $C = C(d,\theta) >0$ and $n_1 = n_1(d,\theta) \in \mathbb{N}$ such that for $n \geq n_1$, $k \in \{1,\ldots,k^*\}$, $s \leq a_n/(n-1)$ and $x \in \mathcal{X}_n$, 
\[
	\biggl| \frac{V_df(x)h_x^{-1}(s)^d-s}{s} + \frac{s^{2/d} \Delta f(x)}{2(d+2)V_d^{2/d}f(x)^{1+2/d}} \biggr| \leq C\biggl\{ \frac{sa(f(x))^{d/2}}{f(x)} \biggr\}^{\beta/d},
\]
and
\[
	\biggl| \frac{V_df(x)h_x^{-1}(s)^d-s}{s}\biggr| \leq \frac{d^{1/2}V_d^{-2/d}s^{2/d}a(f(x))}{2(d+2)f(x)^{2/d}}+ C\biggl\{ \frac{sa(f(x))^{d/2}}{f(x)} \biggr\}^{\beta/d} \leq \frac{1}{2}.
\]
Thus, for $n \geq n_1$ and $k \in \{1,\ldots,k^*\}$, using the fact that $| \log(1+z)-z| \leq z^2$ for $|z| \leq 1/2$,
\begin{align*}
	|R_{31}| &\leq 2 \int_{\mathcal{X}_n} f(x) \int_0^1 \biggl[ \biggl\{ \frac{dV_d^{-4/d}s^{4/d}a(f(x))^2}{4(d+2)^2f(x)^{4/d}} \\
	& \hspace{4cm} + C^2 \biggl\{ \frac{sa(f(x))^{d/2}}{f(x)} \biggr\}^{2\beta/d} \biggr] \mathrm{B}_{k,n-k}(s) \,ds \,dx \\
	& \leq \frac{ dV_d^{-4/d}\Gamma(k+4/d) \Gamma(n)}{2(d+2)^{2} \Gamma(k) \Gamma(n+4/d)} \int_{\mathcal{X}_n} a(f(x))^2 f(x)^{1-4/d} \, dx \\
	& \hspace{1cm} + \frac{2C^{2} \Gamma(k+2\beta/d) \Gamma(n)}{ \Gamma(k) \Gamma(n+2\beta/d)} \int_{\mathcal{X}_n } a(f(x))^\beta f(x)^{1-2\beta/d} \,dx.
\end{align*}
On the other hand, we also have for $n \geq n_1$ and $k \in \{1,\ldots,k^*\}$ that
\begin{align*}
	|R_{32}| & \leq C \int_{\mathcal{X}_n} f(x) \int_0^1 \biggl\{  \frac{sa(f(x))^{d/2}}{f(x)} \biggr\}^{\beta/d} \mathrm{B}_{k,n-k}(s) \,ds \,dx \\
	& \leq \frac{C \Gamma(k+\beta/d) \Gamma(n)}{\Gamma(k) \Gamma(n+\beta/d)} \int_{\mathcal{X}_n} a(f(x))^{\beta/2} f(x)^{1-\beta/d} \,dx.
\end{align*}
Multiplying each of the integrals by $f(x)/\delta_n$ to an appropriate positive power if necessary and by the second part of Proposition~\ref{Prop:Moment}, for every $\epsilon > 0$,
\[
	\max(|R_{31}|,|R_{32}|) = O \biggl( \max \biggl\{ \frac{k^{\frac{\alpha}{\alpha+d} - \epsilon}}{n^{\frac{\alpha}{\alpha+d} - \epsilon}}\, , \, \frac{k^{\frac{\beta}{d}}}{n^{\frac{\beta}{d}}} \biggr\} \biggr),
\]
uniformly for $f \in \mathcal{F}_{d,\theta}$.

\medskip

\emph{To bound $R_4$}. We have
\[
R_4 = \int_{\mathcal{X}_n} \! \! f(x) \int_{\frac{a_n}{n-1}}^1 \biggl\{ \log \biggl( \frac{(n-1)s}{e^{\Psi(k)}f(x)} \biggr) - \frac{V_d^{-2/d}s^{2/d} \Delta f(x)}{2(d+2)f(x)^{1+2/d}} \biggr\} \mathrm{B}_{k,n-k}(s) \,ds \,dx.
\]
Consider the random variable $\mathrm{B}_1 \sim \text{Beta}(k,n-k)$. Then, using \eqref{Eq:betatail} and the fact that $(n-1)s/e^{\Psi(k)} \geq 1$ for $s \geq a_n/(n-1)$ and $n \geq 3$, we conclude that for every $\epsilon > 0$ and $n \geq 3$,
\begin{align*}
	|R_4| &\leq \biggl\{ \log \Bigl(\frac{n-1}{e^{\Psi(k)}}\Bigr) \! + \! \int_{\mathcal{X}_n} \! \! \! f(x) \biggl( |\log f(x)| + \frac{a(f(x))}{f(x)^{\frac{2}{d}}V_d^{\frac{2}{d}}} \biggr) \, dx \biggr\} \mathbb{P}\Bigl(\mathrm{B}_1 \geq \frac{a_n}{n-1}\Bigr) \\
	&=o(n^{-(3-\epsilon)}),
\end{align*}
uniformly for $f \in \mathcal{F}_{d,\theta}$, where, by Lemma~\ref{Lemma:VfBounds}(i) in the online supplement, we have $\sup_{f \in \mathcal{F}_{d,\theta}} \int_{\mathcal{X}_n} f(x) |\log f(x)| \,dx < \infty$.

\medskip

\emph{To bound $R_5$.} We use the fact that for $f \in \mathcal{F}_{d,\theta}$, $x \in \mathcal{X}$ and $\epsilon' > 0$, 
\begin{align*}
|\log f(x)| &\leq \bigl|\log \|f\|_\infty\bigr| + \log \Bigl(\frac{\|f\|_\infty}{f(x)}\Bigr) \\
&\leq \max\biggl\{\log \gamma \, , \,\log V_d + \frac{1}{\alpha} \log\biggl(\frac{\nu^d(\alpha+d)^{\alpha+d}}{\alpha^\alpha d^d}\biggr)\biggr\} + \frac{1}{\epsilon'}\Bigl(\frac{\gamma}{f(x)}\Bigr)^{\epsilon'}.
\end{align*}
It follows from the first part of Proposition~\ref{Prop:Moment} (having replaced $a(\delta)$ with $\max\{a(\delta),|\log \delta|\}$ if necessary) that for each $\epsilon > 0$,
\[
	R_5 = \int_{\mathcal{X}_n^c} f(x) \{ \log(n-1) - \Psi(n) - \log f(x) \} \,dx = o\biggl(\frac{k^{\frac{\alpha}{\alpha+d} - \epsilon}}{n^{\frac{\alpha}{\alpha+d} - \epsilon}} \biggr)
\]
uniformly in $f \in \mathcal{F}_{d,\theta}$.  The claim follows when $d \geq 3$, $\alpha > 2d/(d-2)$ and $\beta \in (2,4]$.

\medskip


We now consider the case where either $d \leq 2$ or $\alpha \leq 2d/(d-2)$ or $\beta \in (0,2]$, for which we need only show that
\[
\sup_{f \in \mathcal{F}_{d,\theta}}| \mathbb{E}_f(\hat{H}_n) - H| = O \biggl( \max \biggl\{ \frac{k^{\frac{\alpha}{\alpha+d} - \epsilon}}{n^{\frac{\alpha}{\alpha+d} - \epsilon}}\, , \, \frac{k^{\frac{\beta}{d}}}{n^{\frac{\beta}{d}}} \biggr\} \biggr).
\]
The calculation here is very similar, but we approximate $\log u_{x,s}$ simply by $\log \bigl( \frac{(n-1)s}{e^{\Psi(k)}f(x)} \bigr)$.  Writing $R_1',\ldots,R_5'$ for the modified error terms, we obtain
\[
\mathbb{E}_f(\hat{H}_n) = H + \log(n-1) - \Psi(n) + \sum_{i=1}^5 R_i'.
\]
Here, $R_1' = R_1 = o\bigl\{\bigl(\frac{k}{n}\bigr)^{\alpha/(\alpha+d) - \epsilon}\bigr\}$, and $R_2' = R_2 = o(n^{-(3-\epsilon)})$, for every $\epsilon > 0$ in both cases.  On the other hand,
\begin{align*}
R_3' &= \int_{\mathcal{X}_n} f(x) \int_0^\frac{a_n}{n-1} \log \biggl(\frac{V_d f(x) h_x^{-1}(s)^d}{s} \biggr) \mathrm{B}_{k,n-k}(s) \, ds \,dx \\
	&= O \biggl( \max \biggl\{ \frac{k^{\frac{\alpha}{\alpha+d} - \epsilon}}{n^{\frac{\alpha}{\alpha+d} - \epsilon}} \, , \, \frac{k^{\beta/d}}{n^{\beta/d}} \biggr\} \biggr)
\end{align*}
for every $\epsilon > 0$, by Lemma~\ref{Lemma:hxinvbounds}(ii).  Similarly, for every $\epsilon > 0$,
\[
R_4' = \int_{\mathcal{X}_n} f(x) \int_{\frac{a_n}{n-1}}^1 \log\biggl(\frac{(n-1)s}{e^{\Psi(k)}f(x)}\biggr) \, \mathrm{B}_{k,n-k}(s) \, ds \, dx = o(n^{-(3-\epsilon)}),
\]
and $R_5' = R_5 = o\bigl\{\bigl(\frac{k}{n}\bigr)^{\alpha/(\alpha+d) - \epsilon}\bigr\}$.  All of these bounds hold uniformly in $f \in \mathcal{F}_{d, \theta}$, so the claim is established for this setting.

Finally, consider now the case $d \geq 3$, $\alpha > 2d/(d-2)$ and $\beta > 4$.  Again the calculation is very similar to the earlier cases, with the with the main difference being that in bounding the error corresponding to $R_3$, we require a higher-order Taylor expansion of
\[
	\log \biggl( 1+ \frac{V_df(x)h_x^{-1}(s)^d -s}{s} \biggr).
\]
This can be done using Lemma~\ref{Lemma:hxinvbounds}(ii); we omit the details for brevity.
\end{proof}

\begin{proof}[Proof of Corollary~\ref{Cor:WeightedBias}]
It is convenient to write $d' := \lfloor d/4\rfloor + 1$ and $\beta' := \lceil \beta/2\rceil-1$.  We have
\begin{align*}
|\mathbb{E}_f(\hat{H}_n^w) - H| &= \biggl|\sum_{j=1}^k w_j\biggl\{\mathbb{E}_f(\log \xi_{(j),1}) - H - \sum_{l=1}^{\lfloor d/4\rfloor} \frac{\Gamma(j+2l/d)\Gamma(n)}{\Gamma(j)\Gamma(n+2l/d)} \lambda_l\biggr\}\biggr| \\
&\leq \biggl|\sum_{j=1}^k w_j\biggl\{\mathbb{E}_f(\log \xi_{(j),1}) - H - \sum_{l=1}^{\beta'} \frac{\Gamma(j+2l/d)\Gamma(n)}{\Gamma(j)\Gamma(n+2l/d)} \lambda_l\biggr\}\biggr| \\
&\hspace{3cm}+ \biggl|\sum_{j=1}^k w_j \sum_{l=d'}^{\beta'} \frac{\Gamma(j+2l/d)\Gamma(n)}{\Gamma(j)\Gamma(n+2l/d)} \lambda_l\biggr\}\biggr|.
\end{align*}
The first term can be bounded, uniformly for $f \in \mathcal{F}_{d,\theta}$ and $k \in \{1,\ldots,k^*\}$, using Lemma~\ref{Lemma:Bias}.  For the second term, we can use monotonicity properties of ratios of gamma functions to write
\begin{align*}
\biggl|\sum_{j=1}^k w_j &\sum_{l=d'}^{\beta'} \frac{\Gamma(j+2l/d)\Gamma(n)}{\Gamma(j)\Gamma(n+2l/d)} \lambda_l\biggr\}\biggr| \leq \max_{d' \leq \ell \leq \beta'} |\lambda_\ell| \sum_{j=1}^k |w_j| \sum_{l=d'}^{\beta'} \frac{\Gamma(k+2l/d)\Gamma(n)}{\Gamma(k)\Gamma(n+2l/d)} \\
&\leq d^{1/2}\|w\|\bigl(\beta'-d'+1\bigr)\frac{\Gamma(k+2d'/d)\Gamma(n)}{\Gamma(k)\Gamma(n+2d'/d)}\max_{d' \leq l \leq \beta'} |\lambda_l| = O\Bigl(\frac{k^{2d'/d}}{n^{2d'/d}}\Bigr),
\end{align*}
uniformly for $f \in \mathcal{F}_{d,\theta}$.  The result follows.
\end{proof}

\subsection{Proof of Lemma~\ref{varthm}}

Since this proof is long, we focus here on the main argument, and defer proofs of bounds on the many error terms to Appendix~\ref{Appendix:varproof}.
\begin{proof}[Proof of Lemma~\ref{varthm}]
We employ the same notation as in the proof of Lemma~\ref{Lemma:Bias}, except that we redefine $\delta_n$ so that $\delta_n := k c_n^d \log^3 (n-1) /(n-1)$. We write $\mathcal{X}_n := \{x:f(x) \geq \delta_n\}$ for this newly-defined $\delta_n$.  Similar to the proof of Lemma~\ref{Lemma:Bias}, all error terms inside $O(\cdot)$ and $o(\cdot)$ that depend on $k$ are uniform for $k \in \{k_0^*,\ldots,k_1^*\}$, and we now adopt the additional convention that, where relevant, these error terms are also uniform for $f \in \mathcal{F}_{d,\theta}$.  By the nested properties of the classes $\mathcal{F}_{d,\theta}$ with respect to the smoothness parameter $\beta$, we may assume without loss of generality that $\beta \in (0,1]$.  We first deal with the variance of the unweighted estimator $\hat{H}_n$, and note that
\begin{align}
\label{Eq:ThreeMainTerms}
	\Var \hat{H}_n &= n^{-1} \Var \log \xi_1 + ( 1- n^{-1}) \Cov (\log \xi_1, \log \xi_2) \nonumber \\
	& = n^{-1} \Var \log \xi_1 + ( 1- n^{-1}) \bigl\{ \Cov\bigl( \log (\xi_1 f(X_1)), \log (\xi_2 f(X_2))\bigr) \nonumber \\
	& \hspace{75pt} - 2 \Cov\bigl(\log(\xi_1 f(X_1)), \log f(X_2)\bigr) \bigr\}.
\end{align}
We claim that for every $\epsilon > 0$,
\begin{equation}
\label{Eq:Varlogxi}
\Var \log \xi_1 = V(f) + \frac{1}{k}\{1+o(1)\} + O\biggl\{\max\biggl(\frac{k^{\beta/d}}{n^{\beta/d}} \log n \, , \, \frac{k^{\frac{\alpha}{\alpha+d}-\epsilon}}{n^{\frac{\alpha}{\alpha+d}-\epsilon}}\biggr)\biggr\}
\end{equation}
as $n \rightarrow \infty$.  The proof of this claim uses similar methods to those in the proof of Lemma~\ref{Lemma:Bias}.  In particular, writing $S_1,\ldots,S_5$ for remainder terms to be bounded later, we have
\begin{align}
\label{Eq:LongDisplay}
	\mathbb{E}(\log^2 \xi_1) &= \int_\mathcal{X} f(x) \int_0^\infty \log^2 u \,dF_{n,x}(u) \,dx \nonumber \\
	&\hspace{-0.5cm}= \int_{\mathcal{X}_n} f(x) \int_0^1 \mathrm{B}_{k,n-k}(s) \log^2 u_{x,s} \,ds \,dx + S_1 \nonumber\\
	&\hspace{-0.5cm}=\int_{\mathcal{X}_n} f(x) \int_0^\frac{a_n}{n-1} \mathrm{B}_{k,n-k}(s) \log^2 u_{x,s} \,ds \,dx + S_1 +S_2 \nonumber \\
	&\hspace{-0.5cm}=\int_{\mathcal{X}_n} f(x) \int_0^\frac{a_n}{n-1} \log^2 \biggl( \frac{(n-1)s}{e^{\Psi(k)}f(x)} \biggr) \mathrm{B}_{k,n-k}(s) \,ds \,dx + S_1 +S_2 + S_3 \nonumber \\
	&\hspace{-0.5cm}=\int_{\mathcal{X}_n} f(x) \bigl[ \log^2 f(x) - 2 \{\log(n-1)- \Psi(n)\}\log f(x) \nonumber \\
	& \hspace{50pt} + \Psi'(k)-\Psi'(n)+\{\log(n-1)-\Psi(n)\}^2 \bigr] \,dx + \sum_{i=1}^4 S_i \nonumber \\
	&\hspace{-0.5cm}= \int_\mathcal{X} f(x) \log^2 f(x) \,dx + \sum_{i=1}^5 S_i + \frac{1}{k}\{1+o(1)\}, 
\end{align}
as $n \rightarrow \infty$.  
In Appendix~\ref{Appendix:S}, we show that for every $\epsilon > 0$,
\begin{equation}
\label{Eq:Sbounds}
\sum_{i=1}^5 |S_i| = O\biggl\{\max\biggl(\frac{k^{\beta/d}}{n^{\beta/d}} \log n \, , \, \frac{k^{\frac{\alpha}{\alpha+d}-\epsilon}}{n^{\frac{\alpha}{\alpha+d}-\epsilon}}\biggr)\biggr\}
\end{equation}
as $n \rightarrow \infty$.  Combining~\eqref{Eq:LongDisplay} with~\eqref{Eq:Sbounds} and Lemma~\ref{Lemma:Bias}, we deduce that~\eqref{Eq:Varlogxi} holds.

The next step of our proof consists of showing that for every $\epsilon > 0$,
\begin{equation}
\label{Eq:Cov1}
\Cov \bigl( \log (\xi_1 f(X_1)), \log f(X_2) \bigr) =O \biggl( \max\biggl\{\frac{k^{-\frac{1}{2}+\frac{2\alpha-\epsilon}{\alpha+d}}}{n^{\frac{2\alpha-\epsilon}{\alpha+d}}} \, , \, \frac{k^{\frac{1}{2}+\frac{\beta}{d}}}{n^{1+\frac{\beta}{d}}}\log^{2+\beta/d} n\biggr\} \biggr)
\end{equation}
as $n \rightarrow \infty$.  Define
\begin{align*}
F_{n,x}^-(u) &:= \sum_{j=k}^{n-2} \binom{n-2}{j}p_{n,x,u}^j(1-p_{n,x,u})^{n-2-j}, \\
\tilde{F}_{n,x}(u) &:= \sum_{j=k-1}^{n-2} \binom{n-2}{j}p_{n,x,u}^j(1-p_{n,x,u})^{n-2-j},
\end{align*}
so that 
\[
\mathbb{P}(\xi_1 \leq u|X_1=x,X_2=y) = \left\{ \begin{array}{ll} F_{n,x}^-(u) & \mbox{if $\|x-y\| > r_{n,u}$} \\
\tilde{F}_{n,x}(u) & \mbox{if $\|x-y\| \leq r_{n,u}$.} \end{array} \right.
\]
Writing $\tilde{u}_{n,x,y} := V_d(n-1)\|x-y\|^de^{-\Psi(k)}$, we therefore have that
\begin{align}
\label{Eq:Cov}
\Cov \bigl(& \log (\xi_1 f(X_1)), \log f(X_2) \bigr) \nonumber \\
&= \int_{\mathcal{X} \times \mathcal{X}} f(x)f(y) \log f(y) \int_{\tilde{u}_{n,x,y}}^\infty \log\bigl(uf(x)\bigr) \, d(\tilde{F}_{n,x} - F_{n,x}^-)(u) \, dx \, dy  \nonumber \\
&\hspace{2cm}- H(f)\int_{\mathcal{X}} f(x) \int_0^\infty \log\bigl(uf(x)\bigr) \, d(F_{n,x}^- - F_{n,x})(u) \, dx.
\end{align}
To deal with the first term in~\eqref{Eq:Cov}, we make the substitution 
\begin{equation}
\label{Eq:yxz}
y = y_{x,z} := x + \frac{r_{n,1}}{f(x)^{1/d}}z,
\end{equation}
and let $d_n := (24 \log n)^{1/d}$.  Writing $T_1,T_2,T_3$ for remainder terms to be bounded later, for every $\epsilon > 0$ and for $k \geq 2$,
\begin{align}
\label{Eq:Main1}
	&\int_{\mathcal{X} \times \mathcal{X}} f(x)f(y) \log f(y) \int_{\tilde{u}_{n,x,y}}^\infty \log (uf(x)) \,d(\tilde{F}_{n,x}-F^-_{n,x})(u) \,dy \,dx \nonumber \\
	& =\!r_{n,1}^d \! \int_{ \mathcal{X}_n}\! \int_{B_0(d_n)} \!\!\!\!\!\!f(y_{x,z}) \log f(y_{x,z})\! \int_{\frac{ \|z\|^d}{f(x)}}^\infty \! \log (uf(x)) \,d(\tilde{F}_{n,x}-F^-_{n,x})(u) dzdx \!+\! T_1 \nonumber \\
	& =\!r_{n,1}^d \!\int_{ \mathcal{X}_n} \!\!\!\!f(x) \log f(x)\! \int_{B_0(d_n)} \! \int_{\frac{ \|z\|^d}{f(x)}}^\infty \! \log (uf(x)) \,d(\tilde{F}_{n,x}-F^-_{n,x})(u)dzdx \!+ \!T_1 \!+\! T_2 \nonumber \\
	&=\! \frac{k-1}{n\!-\!k\!-\!1}\! \int_{ \mathcal{X}_n} \!\!\!\! f(x) \log f(x) dx \!\! \int_0^{\frac{a_n}{n-1}} \!\! \log \Bigl( \frac{(n\!-\!1)s}{e^{\Psi(k)}} \Bigr) \! \mathrm{B}_{k,n-k-1}(s) \Bigl( 1 \!-\! \frac{(n\!-\!2)s}{k-1} \Bigr) ds \nonumber \\
	& \hspace{10cm}+ \sum_{i=1}^3 T_i \nonumber \\
	& = \frac{H(f)}{n} + O(n^{-2}) +o\biggl(\frac{k^{\frac{\alpha}{\alpha+d}-\epsilon}}{n^{1+\frac{\alpha}{\alpha+d}-\epsilon}}\biggr) + \sum_{i=1}^3 T_i.
\end{align}
In Appendix~\ref{Appendix:T}, we show that for every $\epsilon > 0$,
\begin{equation}
\label{Eq:Tbound}
\sum_{i=1}^3 |T_i| = O \biggl( \max\biggl\{\frac{k^{-\frac{1}{2}+\frac{2\alpha}{\alpha+d} - \epsilon}}{n^{\frac{2\alpha}{\alpha+d} - \epsilon}} \, , \, \frac{k^{\frac{1}{2}+\frac{\beta}{d}}}{n^{1+\frac{\beta}{d}}}\log^{2+\beta/d}n\biggr\} \biggr)
\end{equation}
as $n \rightarrow \infty$.  We now deal with the second term in~\eqref{Eq:Cov}.  Writing $U_1, U_2$ for remainder terms to be bounded later, for every $\epsilon > 0$,
\begin{align}
\label{Eq:2ndterm}
\int_{\mathcal{X}} & f(x) \int_0^\infty \log\bigl(uf(x)\bigr) \, d(F_{n,x}^- - F_{n,x})(u) \, dx \nonumber \\
&= \int_{\mathcal{X}_n} \!\!\!f(x) \!\! \int_0^\frac{a_n}{n-1} \log (u_{x,s}f(x)) \mathrm{B}_{k,n-k-1}(s)\Bigl\{\frac{(n-1)s - k}{n-k-1}\Bigr\} \, ds \, dx + U_1 \nonumber \\
&= \int_{\mathcal{X}_n} \!\!\!f(x) \!\!\int_0^1 \log\Bigl(\frac{(n-1)s}{e^{\Psi(k)}}\Bigr) \mathrm{B}_{k,n-k-1}(s)\Bigl\{\frac{(n-1)s - k}{n-k-1}\Bigr\} \, ds \, dx + U_1 + U_2 \nonumber \\
&= \frac{1}{n-1} + U_1 + U_2 + o\biggl(\frac{k^{\frac{\alpha}{\alpha+d} - \epsilon}}{n^{1 + \frac{\alpha}{\alpha+d} - \epsilon}}\biggr).
\end{align}
In Appendix~\ref{Appendix:U}, we show that for every $\epsilon > 0$, 
\begin{equation}
\label{Eq:Ubound}
|U_1|+|U_2| = O\biggl(\frac{k^{1/2}}{n}\max\biggl\{\frac{k^{\beta/d}}{n^{\beta/d}} \, , \, \frac{k^{\frac{\alpha}{\alpha+d} - \epsilon}}{n^{\frac{\alpha}{\alpha+d} - \epsilon}}\biggr\}\biggr).
\end{equation}
From~\eqref{Eq:Cov},~\eqref{Eq:Main1},~\eqref{Eq:Tbound},~\eqref{Eq:2ndterm} and~\eqref{Eq:Ubound}, we conclude that~\eqref{Eq:Cov1} holds.

By~\eqref{Eq:ThreeMainTerms}, it remains to consider $\Cov\bigl( \log(\xi_1 f(X_1)), \log( \xi_2 f(X_2))\bigr)$. We require some further notation.  Let $F_{n,x,y}$ denote the conditional distribution function of $(\xi_1,\xi_2)$ given $X_1=x, X_2=y$.  Let $a_n^- := (k - 3k^{1/2}\log^{1/2} n) \vee 0$, $a_n^+ := (k + 3k^{1/2}\log^{1/2} n) \wedge (n-1)$, and let
\[
	v_x := \inf \{ u \geq 0 : (n-1)p_{n,x,u}=a_n^+ \}, \quad l_x := \inf \{ u \geq 0 : (n-1)p_{n,x,u}=a_n^-\},
\]
so that $\mathbb{P}\{\xi_1 \leq l_{X_1}\} = o(n^{-(9/2-\epsilon)})$ and $\mathbb{P}\{\xi_1 \geq v_{X_1}\} = o(n^{-(9/2-\epsilon)})$ for every $\epsilon > 0$.  For pairs $(u,v)$ with $u \leq v_x$ and $v \leq v_y$, let $(M_1,M_2,M_3) \sim \mathrm{Multi}(n-2; p_{n,x,u}, p_{n,y,v}, 1-p_{n,x,u}-p_{n,y,v})$, and write
\begin{align*}
G_{n,x,y}(u,v) &:= \mathbb{P}( M_1 \geq k, M_2 \geq k),
\end{align*}
so that $F_{n,x,y}(u,v) = G_{n,x,y}(u,v)$ for $\|x-y\| > r_{n,u} + r_{n,v}$.  Write
\[
	\Sigma := \begin{pmatrix} 1 & \alpha_z \\ \alpha_z & 1 \end{pmatrix}
\]
with $\alpha_z := V_d^{-1} \mu_d\bigl(B_0(1) \cap B_z(1)\bigr)$ for $z \in \mathbb{R}^d$, let $\Phi_\Sigma(s,t)$ denote the distribution function of a $N_2(0,\Sigma)$ random vector at $(s,t)$, and let $\Phi$ denote the standard univariate normal distribution function.  Writing $W_i$ for remainder terms to be bounded later, and writing $h(u,v) := \log(uf(x))\log(vf(y))$ as shorthand, we have
\begin{align}
\label{Eq:LongerDisplay}
	 &\Cov( \log(\xi_1 f(X_1)), \log( \xi_2 f(X_2))) \nonumber \\
	& = \int_{\mathcal{X} \times \mathcal{X}} f(x)f(y) \int_0^\infty \int_0^\infty  h(u,v) \,d(F_{n,x,y} -F_{n,x}F_{n,y})(u,v) \, dx \, dy \nonumber \\
	& = \int_{\mathcal{X} \times \mathcal{X}} f(x)f(y)  \int_{[l_x,v_x] \times [l_y,v_y]}  h(u,v) \,d(F_{n,x,y} - F_{n,x}F_{n,y})(u,v) \, dx \, dy + W_1 \nonumber \\
	& = \int_{\mathcal{X} \times \mathcal{X}} \! f(x)f(y)  \int_{[l_x,v_x] \times [l_y,v_y]} \! \! \! \! \! \! \! \! \! h(u,v) \,d(F_{n,x,y} \!-\!G_{n,x,y})(u,v) dx  dy - \frac{1}{n} + \sum_{i=1}^2 W_i \nonumber \\
	& = \int_{\mathcal{X}_n \times \mathcal{X}}\!\!\!\! f(x)f(y) \int_{l_x}^{v_x}\!\!\! \int_{l_y}^{v_y} \frac{(F_{n,x,y}-G_{n,x,y})(u,v)}{uv} \,du \,dv \,dx \, dy - \frac{1}{n} + \sum_{i=1}^3 W_i \nonumber \\
	& = \frac{r_{n,1}^d}{k} \int_{B_0(2)} \int_{-\infty}^\infty \int_{-\infty}^\infty \{ \Phi_\Sigma(s,t) - \Phi(s) \Phi(t) \} \,ds \,dt \,dz  - \frac{1}{n} + \sum_{i=1}^4 W_i \nonumber \\
	& = \frac{e^{\Psi(k)}}{k(n-1)} - \frac{1}{n} + \sum_{i=1}^4 W_i = O \biggl( \frac{1}{nk} \biggr) + \sum_{i=1}^4 W_i.
\end{align}
The proof in the unweighted case is completed by showing in Appendix~\ref{Appendix:W} that for every $\epsilon > 0$,
\begin{align*}
\sum_{i=1}^4 &|W_i| \\
&= O \biggl( \max \biggl\{ \frac{\log^\frac{5}{2} n}{n k^\frac{1}{2}}, \frac{k^{\frac{3}{2}+\frac{\alpha-\epsilon}{\alpha+d}}}{n^{1+\frac{\alpha-\epsilon}{\alpha+d}}}, \frac{k^{\frac{3}{2}+\frac{2\beta}{d}}}{n^{1+\frac{2\beta}{d}}} , \frac{k^{(1+\frac{d}{2\beta})\frac{\alpha-\epsilon}{\alpha+d}}}{n^{1+\frac{\alpha-\epsilon}{\alpha+d}}},\frac{k^{\frac{1}{2}+\frac{\beta}{d}} \log n}{n^{1+\frac{\beta}{d}}},\frac{k^{\frac{2\alpha-\epsilon}{\alpha+d}}}{n^{\frac{2\alpha-\epsilon}{\alpha+d}}} \biggr\} \biggr) 
\end{align*}
as $n \rightarrow \infty$.

The proof in the weighted case uses similar arguments; details are deferred to Appendix~\ref{Appendix:W}.
\end{proof}

\subsection{Proofs of Theorems~\ref{unifweightedclt} and~\ref{Thm:UnifWeightedCLT}}

\begin{proof}[Proof of Theorem~\ref{unifweightedclt}]
Writing $j_t:=\lfloor tk/d \rfloor$ for $t=1,\ldots,d$ and $d' := \lfloor d/4 \rfloor +1$ for convenience, a sufficient condition for $\mathcal{W}^{(k)} \neq \emptyset$ is that the matrix $A^{(k)} \in \mathbb{R}^{d' \times d'}$ with $(l,t)^{th}$ entry
\[
A_{lt}^{(k)}= \Gamma(j_t)^{-1} \Gamma(j_t+2(l-1)/d)k^{-2(l-1)/d},
\]
is invertible.  This follows because, writing $e_1 := (1, 0, \ldots, 0)^T \in \mathbb{R}^{d'}$ we can then define $w = w^{(k)} \in \mathcal{W}^{(k)}$ by setting
\[
(w_{j_t})_{t=1}^{\lfloor d/4 \rfloor +1} := (A^{(k)})^{-1}e_1
\]
and setting all other entries of $w$ to be zero.  Now define $A \in \mathbb{R}^{d' \times d'}$ to have $(l,t)^{th}$ entry $A_{lt}:=(t/d)^{2(l-1)/d}$.  Since $x^{-a} \Gamma(x)^{-1} \Gamma(x+a) \rightarrow 1$ as $x \rightarrow \infty$ for $a \in \mathbb{R}$, we have $\|A^{(k)}-A\| \rightarrow 0$ as $k \rightarrow \infty$. Now, $A$ is a Vandermonde matrix (depending only on $d$) and as such has determinant
\[
	|A| = \prod_{1 \leq t_1 < t_2 \leq d'} d^{-2/d}(t_2^{2/d}-t_1^{2/d}) >0.
\]
Hence, by the continuity of the determinant and eigenvalues of a matrix, we have that there exists $k_d>0$ such that, for $k \geq k_d$, the matrix $A^{(k)}$ is invertible and
\[
	\|(A^{(k)})^{-1} e_1 \| \leq |\lambda_{\min}(A^{(k)})|^{-1} \leq 2 |\lambda_{\min}(A)|^{-1},
\]
where $\lambda_{\min}(\cdot)$ denotes the eigenvalue of a matrix with smallest absolute value.  It follows that, for each $k \geq k_d$, there exists $w^{(k)} \in \mathcal{W}^{(k)}$ satisfying $\sup_{k \geq k_d} \|w^{(k)}\| < \infty$, as required.  

Now, by Corollary~\ref{Cor:WeightedBias} and the fact that $w \in \mathcal{W}^{(k)}$, we have for $\epsilon > 0$ sufficiently small,
\[
	\mathbb{E}_f(\hat{H}_n^{w})-H(f)= O \biggl( \max \biggl\{ \frac{k^{\frac{\alpha}{\alpha+d} - \epsilon}}{n^{\frac{\alpha}{\alpha+d} - \epsilon}}\, , \, \frac{k^{\frac{2d'}{d}}}{n^{\frac{2d'}{d}}}\, , \, \frac{k^\frac{\beta}{d}}{n^\frac{\beta}{d}} \biggr\} \biggr) = o(n^{-1/2}),
\]
uniformly for $f \in \mathcal{F}_{d,\theta}$, under our conditions on $k_1^*, \alpha$ and $\beta$. By Lemma~\ref{varthm} we have $\Var \hat{H}_n^{w}=n^{-1} V(f) +o(n^{-1})$ uniformly for $f \in \mathcal{F}_{d,\theta}$. Note that by Cauchy--Schwarz, very similar arguments to those used at~\eqref{Eq:LongDisplay} and Lemma~\ref{Lemma:VfBounds} in the supplement we have that, for $j \in \text{supp}(w)$,
\[
\Bigl| \text{Cov}_f \Bigl( \log \bigl( \xi_{(j),1} f(X_1) \bigr) , \log f(X_1) \Bigr) \Bigr| \leq \bigl\{ V(f) \mathbb{E}_f \bigl[ \log^2 \bigl( \xi_{(j),1} f(X_1) \bigr)\bigr] \bigr\}^{1/2} \rightarrow 0
\]
uniformly for $f \in \mathcal{F}_{d,\theta}$. Therefore, also using~\eqref{Eq:Cov1}, we have that
\begin{align*}
	&\text{Var}_f( \hat{H}_n^w - H_n^*) = \text{Var}_f \hat{H}_n^w + 2 \text{Cov}_f ( \hat{H}_n^w , \log f(X_1)) + n^{-1} V(f) \\
	& = \text{Var}_f \hat{H}_n^w - n^{-1} V(f) + \frac{2}{n} \sum_{j=1}^k w_j \text{Cov}_f \Bigl( \log \bigl( \xi_{(j),1} f(X_1) \bigr), \log f(X_1) \Bigr) \\
	& \hspace{2cm} + 2 (1-n^{-1}) \sum_{j=1}^k w_j \text{Cov} \Bigl( \log \bigl( \xi_{(j),2} f(X_2) \bigr), \log f(X_1) \Bigr) = o(n^{-1})
\end{align*}
as $n \rightarrow \infty$, uniformly for $f \in \mathcal{F}_{d,\theta}$. The conclusion~\eqref{Eq:Eff} follows on writing
\[
	\mathbb{E}_f\bigl\{(\hat{H}_n^w - H_n^*)^2\bigr\} = \text{Var}_f( \hat{H}_n^w - H_n^*) + ( \mathbb{E}_f \hat{H}_n^w - H(f))^2,
\]
and the final conclusion is then immediate.
\end{proof}

\begin{proof}[Proof of Theorem~\ref{Thm:UnifWeightedCLT}]
We have
\begin{align}
\label{Eq:dBL1}
d_{\mathrm{BL}}\Bigl(\mathcal{L}\bigl(n^{1/2}&\{\hat{H}_n^w - H(f)\}\bigr),\mathcal{L}\bigl(n^{1/2}\{H_n^* - H(f)\}\bigr)\bigr)\Bigr) \nonumber \\
&\leq \sup_{h \in \mathcal{H}} \mathbb{E}_f\bigl|h\bigl(n^{1/2}\{\hat{H}_n^w - H(f)\}\bigr) - h\bigl(n^{1/2}\{H_n^* - H(f)\}\bigr)\bigr| \nonumber \\
&\leq n^{1/2}\mathbb{E}_f|\hat{H}_n^w - H_n^*| \leq n^{1/2}\bigl[\mathbb{E}_f\bigl\{(\hat{H}_n^w - H_n^*)^2\bigr\}\bigr]^{1/2}.
\end{align}
Now write $\mathcal{H}^*$ for the class of functions $h:\mathbb{R} \rightarrow \mathbb{R}$ having Lipschitz constant at most 1, and let $Z \sim N\bigl(0,V(f)\bigr)$.  Then by standard properties of the Wasserstein distance \citep[e.g.][p.~424]{GibbsSu2002} and the non-uniform version of the Berry--Esseen theorem \citep[e.g.][Theorem~1]{Paditz1989}, 
\begin{align}
\label{Eq:dBL2}
d_{\mathrm{BL}}\Bigl(\mathcal{L}&\bigl(n^{1/2}\{H_n^* - H(f)\}\bigr)\bigr),N\bigl(0,V(f)\bigr)\Bigr) \nonumber \\
&\leq \sup_{h \in \mathcal{H^*}} \bigl|\mathbb{E}_fh\bigl(n^{1/2}\{H_n^* - H(f)\}\bigr) - \mathbb{E}h(Z)\bigr| \nonumber \\
&= \int_{-\infty}^\infty \Bigl|\mathbb{P}_f\bigl(n^{1/2}\{H_n^* - H(f)\} \leq x\bigr) - \mathbb{P}(Z \leq x)\Bigr| \, dx \leq \frac{78\beta_3(f)}{n^{1/2}V(f)},
\end{align}
where
\[
\beta_3(f) := \mathbb{E}_f\bigl\{\bigl|\log f(X_1) + H(f)\bigr|^3\bigr\} = \int_{\mathcal{X}} f(x)|\log f(x) + H(f)|^3 \, dx.
\]
We conclude from~\eqref{Eq:dBL1} and~\eqref{Eq:dBL2}, together with Theorem~\ref{unifweightedclt} and Lemma~\ref{Lemma:VfBounds} in the online supplement, that
\[
\sup_{k \in \{k_0^*,\ldots,k_1^*\}} \sup_{f \in \mathcal{F}_{d,\theta}} d_{\mathrm{BL}}\Bigl(\mathcal{L}\bigl(n^{1/2}(\hat{H}_n^w - H(f))\bigr),N\bigl(0,V(f)\bigr)\Bigr) \rightarrow 0
\]
as $n \rightarrow \infty$, as required.

For the second part of the theorem, set
\[
\epsilon_n = \epsilon_n^w(d,\theta) := \frac{\sup_{k \in \{1,\ldots,k^*\}} \sup_{f \in \mathcal{F}_{d,\theta}} \Bigl(2\mathbb{E}_f\bigl[\{\tilde{V}_n^w - V(f)\}^2\bigr]\Bigr)^{1/3}}{\inf_{f \in \mathcal{F}_{d,\theta}} V(f)^{2/3}},
\]
so that $\epsilon_n \rightarrow 0$, by Lemmas~\ref{Lemma:VfBounds}(ii) and~\ref{Lemma:Vw} in the online supplement.  Then, by two applications of Markov's inequality, for $n$ large enough that $\epsilon_n \leq 1$,
\begin{align*}
 \mathbb{P}_f\biggl(\biggl|\frac{(\hat{V}_n^w)^{1/2}}{V^{1/2}(f)} - 1\biggr| \geq \epsilon_n\biggr) &\leq \mathbb{P}_f\biggl(\biggl|\frac{\tilde{V}_n^w}{V(f)} - 1\biggr| \geq \epsilon_n\biggr) + \mathbb{P}_f(\tilde{V}_n^w \leq 0) \\
&\leq \frac{\mathbb{E}_f\bigl[\{\tilde{V}_n^w - V(f)\}^2\bigr]}{V(f)^2}\biggl(\frac{1}{\epsilon_n^2}+1\biggr) \leq \epsilon_n.
\end{align*}
For $n \in \mathbb{N}$ and $L \geq 1$, define $h_{n,L}:\mathbb{R} \rightarrow [0,1]$ by
\[
h_{n,L}(x) := \left\{ \begin{array}{ll} 0 & \mbox{if $|x| > z_{q/2}(1 + \epsilon_n) + 1/L$} \\
L\{z_{q/2}(1 + \epsilon_n) + 1/L - |x|\} & \mbox{if $0 < |x| - z_{q/2}(1 + \epsilon_n) \leq 1/L$} \\
1 & \mbox{if $|x| \leq z_{q/2}(1 + \epsilon_n)$.} \end{array} \right. 
\]
Thus $h_{n,L}$ has Lipschitz constant $L$ and $h_{n,L}(x) \geq \mathbbm{1}_{\{|x| \leq z_{q/2}(1 + \epsilon_n)\}}$.  Then, with $Z \sim N(0,1)$,
\begin{align*}
\mathbb{P}_f\bigl(I_{n,q} &\ni H(f)\bigr) \\
&\leq \mathbb{P}_f\biggl(\frac{n^{1/2}|\hat{H}_n^w - H(f)|}{V^{1/2}(f)} \leq z_{q/2}(1+\epsilon_n)\biggr) + \mathbb{P}_f\biggl(\frac{V^{1/2}(f)}{(\hat{V}_n^w)^{1/2}} \leq \frac{1}{1+\epsilon_n}\biggr) \\
&\leq \mathbb{E}_f h_{n,L}\biggl(\frac{n^{1/2}\{\hat{H}_n^w - H(f)\}}{V^{1/2}(f)}\biggr) + \epsilon_n \\
&\leq \mathbb{E}_f h_{n,L}(Z) + \epsilon_n + L d_{\mathrm{BL}}\biggl(\mathcal{L}\biggl(\frac{n^{1/2}\{\hat{H}_n^w - H(f)\}}{V^{1/2}(f)}\biggr),\mathcal{L}(Z)\biggr) \\
&\leq \mathbb{P}\bigl(|Z| \leq z_{q/2}(1+\epsilon_n) + L^{-1}\bigr)
+ \epsilon_n \\
&\hspace{0.5cm}+ L\max\bigl((1,V^{-1/2}(f)\bigr)d_{\mathrm{BL}}\Bigl(\mathcal{L}\bigl(n^{1/2}(\hat{H}_n^w - H(f))\bigr),N\bigl(0,V(f)\bigr)\Bigr).
\end{align*}
Since $L \geq 1$ was arbitrary, we deduce from the first part of the theorem and Lemma~\ref{Lemma:VfBounds} in the online supplement that
\[
\limsup_{n \rightarrow \infty} \sup_{q \in (0,1)} \sup_{k \in \{k_0^*,\ldots,k_1^*\}} \sup_{f \in \mathcal{F}_{d,\theta}}\mathbb{P}_f\bigl(I_{n,q} \ni H(f)\bigr) - (1-q) \leq \inf_{L \geq 1} \frac{2}{L(2\pi)^{1/2}} = 0.
\]
The lower bound is obtained by a similar argument, omitted for brevity.
\end{proof}

\textbf{Acknowledgements:} We thank the reviewers for constructive feedback on an earlier draft.  The second author is grateful to Sebastian Nowozin for introducing him to this problem, and to G\'erard Biau for helpful discussions.

\clearpage

\begin{frontmatter}

\title{Supplementary material to `Efficient multivariate entropy estimation via $k$-nearest neighbour distances'}
\runtitle{Efficient entropy estimation}
\begin{aug}
\author{\fnms{Thomas} B. \snm{Berrett}},
\author{\fnms{Richard} J. \snm{Samworth}}
\and
\author{\fnms{Ming} \snm{Yuan}}
\runauthor{T. B. Berrett, R. J. Samworth and M. Yuan}


\end{aug}

\end{frontmatter}


\section*{Appendix}
\renewcommand{\thesection}{A}

This is the supplementary material to \citet{BSY2016Main}, hereafter referred to as the main text.

\subsection{Proofs of auxiliary results}
\label{Appendix:Auxiliary}

\begin{proof}[Proof of Proposition~\ref{Prop:Moment}]
Fix $\tau \in \bigl(\frac{d}{\alpha+d},1\bigr]$.  We first claim that given any $\epsilon > 0$, there exists $A_\epsilon > 0$ such that $a(\delta) \leq A_\epsilon\delta^{-\epsilon}$ for all $\delta \in (0,\gamma]$.  To see this, observe that there exists $\delta_0 \in (0,\gamma]$ such that $a(\delta) \leq \delta^{-\epsilon}$ for $\delta \leq \delta_0$.  But then
\[
\sup_{\delta \in (0,\gamma]} \delta^\epsilon a(\delta) \leq \max\bigl\{1,\gamma^\epsilon a(\delta_0) \bigr\} \leq \gamma^\epsilon \delta_0^{-\epsilon},
\]
which establishes the claim, with $A_\epsilon := \gamma^\epsilon \delta_0^{-\epsilon}$.  Now choose $\epsilon = \frac{1}{3}\bigl(\tau - \frac{d}{\alpha+d}\bigr)$ and let $\tau' := \frac{\tau}{3}+\frac{2d}{3(\alpha+d)} \in \bigl(\frac{d}{\alpha+d},1\bigr)$.  Then, by H\"older's inequality, and since $\alpha\tau'/(1-\tau') > d$,
\begin{align*}
\sup_{f \in \mathcal{F}_{d,\theta}} \int_{\{x:f(x) < \delta\}} &a\bigl(f(x)\bigr)f(x)^\tau \, dx 
\leq A_\epsilon\delta^\epsilon \sup_{f \in \mathcal{F}_{d,\theta}} \int_{\{x:f(x) < \delta\}} f(x)^{\tau'} \, dx \\
&\leq A_\epsilon\delta^\epsilon(1+\nu)^{\tau'}\biggl\{\int_{\mathbb{R}^d} (1+\|x\|^\alpha)^{-\frac{\tau'}{1-\tau'}} \, dx \biggr\}^{1-\tau'} \rightarrow 0
\end{align*}
as $\delta \searrow 0$, as required.  

For the second part, fix $\rho > 0$, set $\epsilon := \frac{1}{2}\bigl(\tau - \frac{d}{\alpha+d}\bigr)$ and $\tau' := \frac{\tau}{2}+\frac{d}{2(\alpha+d)} \in \bigl(\frac{d}{\alpha+d},1\bigr)$.  Then, by H\"older's inequality again,
\begin{align*}
\sup_{f \in \mathcal{F}_{d,\theta}} \int_{\mathcal{X}} a\bigl(f(x)\bigr)^\rho &f(x)^\tau \, dx \leq A_{\epsilon/\rho} \sup_{f \in \mathcal{F}_{d,\theta}} \int_{\mathcal{X}} f(x)^{\tau'} \, dx \\
&\leq A_{\epsilon/\rho}(1+\nu)^{\tau'}\biggl\{\int_{\mathbb{R}^d} (1+\|x\|^\alpha)^{-\frac{\tau'}{1-\tau'}} \, dx \biggr\}^{1-\tau'} < \infty,
\end{align*}
as required.
\end{proof}
\begin{proof}[Proof of Lemma~\ref{Lemma:hxinvbounds}]
(i) The lower bound is immediate from the fact that $h_x(r) \leq V_d\|f\|_\infty r^d$ for any $r > 0$.  For the upper bound, observe that by Markov's inequality, for any $r > 0$,
\[
h_x(\|x\|+r) = \int_{B_x(\|x\|+r)} f(y) \, dy \geq \int_{B_0(r)} f(y) \, dy \geq 1 - \frac{\mu_\alpha(f)}{r^\alpha}.
\]
The result follows on substituting $r = \bigl(\frac{\mu_\alpha(f)}{1-s}\bigr)^{1/\alpha}$ for $s \in (0,1)$.

(ii) We first prove this result in the case $\beta \in (2,4]$, giving the stated form of $b_1(\cdot)$.  Let $C := 4dV_d^{-\beta/d}/(d+\beta)$, and let $y:= C a(f(x))^{\beta/2} s \{s/f(x)\}^{\beta/d}$. Now, by the mean value theorem, we have for $r \leq r_a(x)$ that
\begin{align*}
	\biggl| h_x(r) - V_d r^d f(x) - \frac{V_d}{2(d+2)} r^{d+2} \Delta f(x) \biggr| \leq a(f(x)) f(x) \frac{dV_d}{2(d+\beta)} r^{d+\beta}.
\end{align*}
It is convenient to write
\[
s_{x,y} := s- \frac{s^{1+2/d} \Delta f(x)}{2(d+2)V_d^{2/d} f(x)^{1+2/d}} + y.
\]
Then, provided $s_{x,y} \in (0,V_dr_a^d(x)f(x)]$, we have
\begin{align*}
	h_x \biggl( &\frac{s_{x,y}^{1/d}}{\{V_d f(x) \}^{1/d}}\biggr)  \\
	& \hspace{20pt} \geq s_{x,y} + \frac{ V_d^{-2/d} \Delta f(x)}{2(d+2) f(x)^{1+2/d} } s_{x,y}^{1+2/d} - \frac{a(f(x))dV_d^{-\beta/d}}{2(d+\beta)f(x)^{\beta/d}} s_{x,y}^{1+\beta/d}.
\end{align*}
Now, by our hypothesis, we know that 
\begin{align*}
\sup_{f \in \mathcal{F}_{d,\theta}} \sup_{s \in \mathcal{S}_n} \sup_{x \in \mathcal{X}_n} \max \biggl\{ &\frac{V_d^{-2/d}s^{2/d} |\Delta f(x)|}{2(d+2) f(x)^{1+2/d}}, \frac{y}{s} \biggr\} \\
&\leq \max \biggl\{ \frac{d^{1/2}V_d^{-2/d} C_n^{2/d}}{2(d+2)}, C C_n^{\beta/d} \biggr\} \rightarrow 0
\end{align*}
as $n \rightarrow \infty$.  Hence there exists $n_1 = n_1(d,\theta) \in \mathbb{N}$ such that for all $n \geq n_1$, all $f \in \mathcal{F}_{d,\theta}$, $s \in \mathcal{S}_n$ and $x \in \mathcal{X}_n$, we have
\begin{align*}
	\frac{1}{2(d+2)}(s_{x,y}^{1+2/d} - s^{1+2/d}) \geq - \frac{s^{1+2/d}}{2d} \biggl\{ \frac{d^{1/2}V_d^{-2/d}a(f(x))s^{2/d}}{2(d+2)f(x)^{2/d}} + \frac{y}{s} \biggr\}.
\end{align*}
Moreover, there exists $n_2 = n_2(d,\theta) \in \mathbb{N}$ such that for all $n \geq n_2$, all $s \in \mathcal{S}_n$, $x \in \mathcal{X}_n$ and $f \in \mathcal{F}_{d,\theta}$ we have
\begin{align*}
	|s_{x,y}|^{1+\beta/d} \leq 2s^{1+\beta/d}.
\end{align*}
Finally, we can choose $n_3 = n_3(d,\theta) \in \mathbb{N}$ such that
\[
\max\biggl\{\frac{C_n^{(4-\beta)/d}}{4(d+2)V_d^{(4-\beta)/d}}\, , \, \frac{2d^{1/2}C_n^{2/d}}{(d+\beta)V_d^{2/d}}\, , \, \frac{d^{3/2}C_n^{2/d}}{2(d+2)(d+\beta)V_d^{2/d}}\biggr\} \leq \frac{d}{d+\beta}
\]
and such that $C_n \leq (8d^{1/2})^{-d}V_d/2$ for $n \geq n_3$.  It follows that for $n \geq \max(n_1,n_2,n_3) =: n_*$, for $f \in \mathcal{F}_{d,\theta}$, $s \in \mathcal{S}_n$ and for $x \in \mathcal{X}_n$, we have that $s_{x,y} \in (0,V_dr_a^d(x)f(x)]$ and
\begin{align*}
&h_x \biggl( \frac{s_{x,y}^{1/d}}{\{V_d f(x) \}^{1/d}}\biggr) -s\\
&\geq y - \frac{a(f(x))s^{1+2/d}}{2d^{1/2}V_d^{2/d}f(x)^{2/d}} \biggl\{ \frac{d^{1/2}V_d^{-2/d} a(f(x))s^{2/d}}{2(d+2)f(x)^{2/d}}+\frac{y}{s} \biggr\} - \frac{d a(f(x))s^{1+\beta/d}}{(d+\beta)V_d^{\frac{\beta}{d}}f(x)^{\frac{\beta}{d}}} \\
	&\geq \frac{a(f(x))^{\beta/2}s^{1+\beta/d}}{f(x)^{\beta/d}}\biggl[ C - \frac{a(f(x))^{2-\beta/2}}{4(d+2)V_d^{4/d}} \biggl\{ \frac{s}{f(x)}\biggr\}^{(4-\beta)/d} \\
&\hspace{4cm}- \frac{Ca(f(x))}{2d^{1/2}V_d^{2/d}}\biggl\{ \frac{s}{f(x)}\biggr\}^{2/d} - \frac{dV_d^{-\beta/d}}{d+\beta} \biggr]  \geq 0.
\end{align*}
The lower bound is proved by very similar calculations, and the result for the case $\beta \in (2,4]$ follows.  The general case can be proved using very similar arguments, and is omitted for brevity.
\end{proof}

\subsection{Auxiliary results for the proof of Theorem~\ref{Thm:UnifWeightedCLT}}

Recall the definition of $V(f)$ given in the statement of Theorem~\ref{unifweightedclt}.
\begin{lemma}
\label{Lemma:VfBounds}
For each $d \in \mathbb{N}$ and $\theta \in \Theta$ and $m \in \mathbb{N}$, we have
\begin{enumerate}[(i)]
\item $\sup_{f \in \mathcal{F}_{d,\theta}} \int_{\mathcal{X}} f(x)|\log^m f(x)| \, dx < \infty$;
\item $\inf_{f \in \mathcal{F}_{d,\theta}} V(f) > 0$;
\end{enumerate}
\end{lemma}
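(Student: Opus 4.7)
The plan for (i) is to split $\mathcal{X}$ according to the size of $f$.  On $\{f \geq 1\}$, $|\log f|^m \leq \max(0,\log \gamma)^m$, so this part contributes at most $\max(0,\log \gamma)^m$.  On $\{f < 1\}$, the elementary calculus inequality $t(-\log t)^m \leq (m/\eta)^m e^{-m}\, t^{1-\eta}$ for $t \in (0,1]$, valid for any $\eta > 0$, reduces matters to controlling $\int_{\mathcal{X}} f^{1-\eta}\,dx$.  H\"older's inequality with conjugate exponents $1/(1-\eta)$ and $1/\eta$, applied against the weights $(1+\|x\|^\alpha)^{\pm(1-\eta)}$, yields
\begin{equation*}
\int_{\mathcal{X}} f^{1-\eta}\,dx \leq (1+\mu_\alpha(f))^{1-\eta} \biggl\{\int_{\mathbb{R}^d}(1+\|x\|^\alpha)^{-(1-\eta)/\eta}\,dx\biggr\}^{\eta},
\end{equation*}
which is uniformly bounded on $\mathcal{F}_{d,\theta}$ as soon as $\eta \in (0,\alpha/(\alpha+d))$, at which point the last integral is finite.

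For (ii), the nestedness of $\{\mathcal{F}_{d,\theta}\}$ in $\beta$ (Lemma~\ref{Lemma:15over7}) allows us to reduce to the case $\beta \leq 1$, so that the smoothness condition specialises to $|f(y)-f(x)| \leq a(f(x))f(x)\|y-x\|^{\beta}$ for $y \in B_x^\circ(r_a(x))$.  From $1 = \int f \leq \|f\|_\infty V_d R^d + \nu/R^\alpha$ evaluated at $R = (2\nu)^{1/\alpha}$, we obtain the pointwise lower bound $\sup f \geq m_* := \{2V_d(2\nu)^{d/\alpha}\}^{-1}$.  Pick $x_* \in \mathcal{X}$ with $f(x_*) \geq m_*/2$.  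Since $\int f = 1$ forces $\inf f = 0$, continuity of $f$ together with the intermediate value theorem, applied along a continuous path in $\mathbb{R}^d$ leaving $x_*$, produces $y_0 \in \mathcal{X}$ with $f(y_0) = m_*/32$.

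Set $r_1 := \{8d^{1/2}a(m_*/2)\}^{-1/\beta}$ and $r_2 := \{8d^{1/2}a(m_*/32)\}^{-1/\beta}$.  Monotonicity of $a$ combined with the H\"older bound gives $f(y) \geq (7/8)f(x_*) \geq 7m_*/16$ throughout $B_{x_*}(r_1)$, and $f(y) \in [7m_*/256,\,9m_*/256]$ throughout $B_{y_0}(r_2)$.  Writing $c := e^{-H(f)}$, I would now split into cases.  If $c \leq m_*/8$, then on $B_{x_*}(r_1)$ we have $f \geq 2c$ and hence $(\log f + H(f))^2 \geq \log^2 2$, giving
\begin{equation*}
V(f) \geq \int_{B_{x_*}(r_1)} f(\log f + H(f))^2 \geq (7m_*/16)V_d r_1^d \log^2 2.
\end{equation*}
If instead $c > m_*/8$, then on $B_{y_0}(r_2)$ we have $f \leq 9m_*/256 < c/2$, and the same argument yields $V(f) \geq (7m_*/256)V_d r_2^d \log^2 2$.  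Both constants depend only on $d$ and $\theta$, so $\inf_{f\in\mathcal{F}_{d,\theta}} V(f) > 0$.

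The main obstacle is (ii): one needs to rule out quantitatively the degenerate case $V(f) = 0$, which would require $f$ to be constant on its support.  The argument above makes this rigorous uniformly on $\mathcal{F}_{d,\theta}$ by combining three ingredients: the moment bound gives a uniform lower bound on $\sup f$; integrability together with continuity and the intermediate value theorem furnishes a point where $f$ takes a prescribed small value; and the H\"older condition then upgrades these pointwise facts into balls of controlled $f$-measure on which $f$ is bounded away from $e^{-H(f)}$ by a factor of at least two.
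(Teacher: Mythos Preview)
Your proof of (i) is correct and essentially the same as the paper's: both pivot on the elementary inequality $t|\log t|^m \lesssim_\eta t^{1-\eta}$ for $t\in(0,1]$ and then bound $\int f^{1-\eta}$ via H\"older against the weight $(1+\|x\|^\alpha)$, with $\eta<\alpha/(\alpha+d)$ ensuring integrability.

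Your proof of (ii) is also correct but is organised differently from the paper's.  The paper first invokes part~(i) to obtain a uniform bound $A_{d,\theta}\geq\sup_{f\in\mathcal{F}_{d,\theta}}|H(f)|$, then works on the single level set $S_{d,\theta}:=\{x:e^{-4A_{d,\theta}}\leq f(x)\leq e^{-2A_{d,\theta}}\}$, on which $\log f+H(f)\leq -A_{d,\theta}$ holds automatically; the H\"older condition (via Lemma~\ref{Lemma:15over7}) then shows $S_{d,\theta}$ contains a ball of uniform radius and $f$-mass.  You instead perform a dichotomy on $c=e^{-H(f)}$ relative to the fixed threshold $m_*/8$, producing in each case a ball on which $f$ is separated from $c$ by a factor of at least two.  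This has the mild advantage of making part~(ii) independent of part~(i) (no uniform control of $|H(f)|$ is needed), at the cost of two cases rather than one.  One point worth making explicit is that the H\"older condition, stated only for $x\in\mathcal{X}$, does force $f$ to be continuous on all of $\mathbb{R}^d$: if $y_n\to x_0\in\mathcal{X}^c$ with $f(y_n)\geq\delta>0$, then applying the bound centred at $y_n$ with $x_0\in B_{y_n}(r_a(y_n))$ for large $n$ gives $f(x_0)>0$, a contradiction.  This is what licenses your intermediate value step.
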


\begin{proof}[Proof of Lemma~\ref{Lemma:VfBounds}]
Fix $d \in \mathbb{N}$ and $\theta = (\alpha,\beta,\gamma,\nu,a) \in \Theta$.

(i) For $\epsilon \in (0,1)$ and $t \in (0,1]$, we have
\[
	\log \frac{1}{t} \leq \frac{1}{\epsilon} t^{-\epsilon}.
\]
Let $\epsilon = \frac{\alpha}{m(\alpha+2d)}$, so that $\frac{\alpha(1-m\epsilon)}{m\epsilon} = 2d$.  Then, by H\"{o}lder's inequality, for any $f \in \mathcal{F}_{d,\theta}$,
\begin{align*}
\int_{\mathcal{X}} f(x)|\log^m f(x)| \, dx &\leq 2^{m-1}\int_{\mathcal{X}} f(x) \log^m\Bigl(\frac{\|f\|_\infty}{f(x)}\Bigr) \, dx + 2^{m-1}|\log^m \|f\|_\infty| \\
&\leq \frac{2^{m-1}\|f\|_\infty^{m\epsilon}}{\epsilon^m} \int_{\mathcal{X}} f(x)^{1-m\epsilon} \, dx + 2^{m-1}|\log^m \|f\|_\infty| \\
	& \leq \frac{2^{m-1}\gamma^{m\epsilon}}{\epsilon^m} (1+\nu)^{1-m\epsilon} \Bigl\{ \int_{\mathcal{X}} (1+\|x\|^\alpha)^{-\frac{1-m\epsilon}{m\epsilon}} \,dx \Bigr\}^{m\epsilon} \\
&+ 2^{m-1}\max\biggl\{\log^m \gamma \, , \, \frac{1}{\alpha^m}\log^m\biggl(\frac{V_d^\alpha\nu^d(\alpha+d)^{\alpha+d}}{\alpha^\alpha d^d}\biggr)\biggr\},
\end{align*}
where the bound on $\bigl|\log^m \|f\|_\infty\bigr|$ comes from~\eqref{Eq:LotsofTerms} in the main text.

(ii) Now define
\[
	A_{d,\theta} := \max \biggl\{ \sup_{f \in \mathcal{F}_{d,\theta}} |H(f)| \, , \, -\frac{1}{2} \log \inf_{f \in \mathcal{F}_{d,\theta}} \|f\|_\infty \, , \, 1\biggr\}
\]
and the set $S_{d,\theta}:= \{ x \in \mathcal{X} : e^{-4A_{d,\theta}} \leq f(x) \leq e^{-2A_{d,\theta}} \}$. For $f \in \mathcal{F}_{d,\theta}, x \in S_{d,\theta}$ and $y \in B_x( \{8d^{1/2}a(e^{-4A_{d,\theta}}) \}^{-1/(\beta \wedge 1)})$ we have by Lemma~\ref{Lemma:15over7} below that
\begin{equation}
\label{Eq:Lipschitz}
	|f(y)-f(x)| \leq \frac{15d^{1/2}}{7} a(e^{-4A_{d,\theta}}) e^{-2 A_{d,\theta}} \|y-x\|^{\beta \wedge 1}.
\end{equation}
By the continuity of $f$, there exists $x_0 \in S_{d,\theta}$ such that $f(x_0) = \frac{1}{2} e^{-2A_{d,\theta}}(1+e^{-2A_{d,\theta}})$. Thus, by~\eqref{Eq:Lipschitz}, we have that $B_{x_0}(r_{d,\theta}) \subseteq S_{d,\theta}$, where
\[
	r_{d,\theta} := \Bigl\{ \frac{7(1-e^{-2A_{d,\theta}})}{30 d^{1/2}a(e^{-4A_{d,\theta}})} \Bigr\}^{1/(\beta \wedge 1)} \wedge \frac{1}{8d^{1/2}a(e^{-4A_{d,\theta}})\}^{1/(\beta \wedge 1)}}.
\]
Hence
\begin{align*}
	V(f) = \mathbb{E}_f [ \{ \log f(X_1) +H(f) \}^2] \geq A_{d,\theta}^2 \mathbb{P}_f ( X_1 \in S_{d,\theta}) \geq A_{d,\theta}^2 e^{-4A_{d,\theta}} V_d r_{d,\theta}^d,
\end{align*}
as required.
\end{proof}
The following auxiliary result  provides control on deviations of the density arising from the smoothness condition of our $\mathcal{F}_{d,\theta}$ classes.
\begin{lemma}
\label{Lemma:15over7}
For $\theta = (\alpha,\beta,\gamma,\nu,a) \in \Theta$, $m:= \lceil \beta \rceil - 1$, $f \in \mathcal{F}_{d,\theta}$ and $y \in B_x\bigl( r_a(x)\bigr)$, we have, for multi-indices $t$ with $|t| \leq m$, that 
\[
\Bigl| \frac{\partial f^t(y)}{\partial x^t} - \frac{\partial f^t(x)}{\partial x^t} \Bigr| \leq \frac{15d^{1/2}}{7} a\bigl(f(x)\bigr)f(x)\|y-x\|^{\min(\beta-|t|,1)}.
\]
\end{lemma}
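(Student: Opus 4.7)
The plan is to apply Taylor's theorem with Lagrange remainder to $\partial^t f$ expanded around $x$ at order $\ell := m - |t|$, and to bound each resulting contribution using only what the definition of $M_{f,a,\beta}(x)$ directly provides.

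First, I would dispose of the base case $|t| = m$: here $\ell = 0$, and the bound is immediate from the H\"older clause in the definition of $M_{f,a,\beta}(x)$, giving constant $1 \leq 15d^{1/2}/7$; since $\beta - m \in (0,1]$ the exponent coincides with $\min(\beta-|t|,1)$. Notice that in the remaining case $|t| < m$ we necessarily have $\beta > 1$, so $\beta \wedge 1 = 1$ and $r_a(x) = (8d^{1/2}a(f(x)))^{-1}$.

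For $|t| < m$, I would apply the Lagrange form of Taylor's theorem to the scalar function $u \mapsto \partial^t f(x + u(y-x))$ on $[0,1]$. Using the chain rule to convert derivatives into multi-index sums, and splitting the remainder into its value at $x$ plus a deviation, this yields some $\xi' \in (x,y)$ for which
\begin{align*}
\partial^t f(y) - \partial^t f(x) &= \sum_{k=1}^{\ell} \sum_{|s|=k} \frac{(y-x)^s}{s!}\,\partial^{t+s}f(x) \\
&\quad + \sum_{|s|=\ell} \frac{(y-x)^s}{s!}\bigl[\partial^{t+s}f(\xi') - \partial^{t+s}f(x)\bigr].
\end{align*}
Each polynomial block may be bounded by $a(f(x))f(x)(d^{1/2}\|y-x\|)^k/k!$ using $|\partial^{t+s}f(x)| \leq \|f^{(|t|+k)}(x)\| \leq a(f(x))f(x)$ (valid since $|t|+k \leq m$) together with the multinomial identity $\sum_{|s|=k}|y-x|^s/s! = \|y-x\|_1^k/k!$ and $\|\cdot\|_1 \leq d^{1/2}\|\cdot\|_2$. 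The remainder is bounded analogously, now substituting the H\"older clause $\|f^{(m)}(\xi')-f^{(m)}(x)\| \leq a(f(x))f(x)\|\xi'-x\|^{\beta-m}$ and the crude estimate $\|\xi'-x\| \leq \|y-x\|$. Writing $r := \|y-x\|$, this yields
\[
|\partial^t f(y) - \partial^t f(x)| \leq a(f(x))f(x)\biggl[\sum_{k=1}^\ell \frac{(d^{1/2}r)^k}{k!} + \frac{d^{\ell/2}\, r^{\beta-|t|}}{\ell!}\biggr].
\]

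Since $\beta - |t| > 1$ forces $\min(\beta-|t|,1) = 1$, dividing through by $r$ reduces the whole matter to a numerical estimate. The specific choice $r_a(x) = (8d^{1/2}a(f(x)))^{-1}$ combined with $a(f(x)) \geq 1$ gives $d^{1/2}r \leq 1/8$, so the polynomial part contributes at most $d^{1/2}(e^{d^{1/2}r}-1)/(d^{1/2}r) \leq 8d^{1/2}(e^{1/8}-1)$ (using that $x \mapsto (e^x-1)/x$ is increasing), while the H\"older term contributes at most $d^{1/2}(1/8)^{\ell-1}/\ell! \leq d^{1/2}$, the worst case being $\ell = 1$. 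The only genuinely delicate point—indeed the reason for the otherwise mysterious constant $15/7$—is the final numerical inequality $8(e^{1/8}-1) + 1 \leq 15/7$, which reduces to $e^{1/8} \leq 8/7$; this is tight (roughly $1.1331$ vs.\ $1.1429$) and is exactly why the paper fixes the constant $8d^{1/2}$ inside $r_a(x)$.
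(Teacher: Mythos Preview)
Your proof is correct, and the overall strategy—Taylor-expand $\partial^t f$, bound the polynomial terms via $\|f^{(|t|+k)}(x)\|\le a(f(x))f(x)$, and bound the top-order remainder via the H\"older clause—is the same as the paper's. The execution differs in one respect worth recording.

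The paper does not write a single Taylor expansion with Lagrange remainder. Instead it iterates the mean value inequality one derivative at a time: $|\partial^t f(y)-\partial^t f(x)|\le \|y-x\|\sup_z\|\nabla\partial^t f(z)\|$, then splits $\|\nabla\partial^t f(z)\|\le \|f^{(|t|+1)}(x)\|+d^{1/2}\vertiii{f^{(|t|+1)}(z)-f^{(|t|+1)}(x)}$ and repeats. This produces a geometric sum $\sum_{\ell\ge 1} d^{(\ell-1)/2}r^{\ell}$ (no factorials), bounded by $r/(1-d^{1/2}r)\le (8/7)r$, plus a H\"older term $d^{m/2}r^{\beta}$; the final numerics are $8/7+d^{1/2}\le 15d^{1/2}/7$, which is just $d\ge 1$. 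Your route instead yields the exponential sum $\sum_{k\ge 1}(d^{1/2}r)^k/k!$, and your closing inequality $e^{1/8}\le 8/7$ is precisely the specialisation of $e^{x}\le 1/(1-x)$ to $x=1/8$—so the two numerical endgames are in fact equivalent, with the paper's geometric bound being the cruder of the two. The trade-off is that your leading polynomial term carries an extra $d^{1/2}$ (from $\|y-x\|_1\le d^{1/2}\|y-x\|_2$ at $k=1$), whereas the paper's first mean-value step avoids this; but the factorials in your series more than compensate, and both land on $15d^{1/2}/7$.
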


\begin{proof}[Proof of Lemma~\ref{Lemma:15over7}]
If $|t|=m$ then the result follows immediately from the definition of $\mathcal{F}_{d,\theta}$. Henceforth, therefore, assume that $m \geq 1$ and $|t| \leq m-1$.  Writing $\vertiii{\cdot}$ here for the largest absolute entry of an array, we have for $y \in B_x\bigl( r_a(x)\bigr)$ that
\begin{align*}
	&\Bigl| \frac{\partial f^t(y)}{\partial x^t} - \frac{\partial f^t(x)}{\partial x^t} \Bigr| \leq \|y-x\| \sup_{z \in B_x(\|y-x\|)} \Bigl\|\nabla \frac{\partial f^t(z)}{\partial x^t} \Bigr\| \\
	& \leq \|y-x\| \|f^{(|t|+1)}(x) \| + d^{1/2} \|y-x\| \sup_{z \in B_x(\|y-x\|)} \vertiii{f^{(|t|+1)}(z) - f^{(|t|+1)}(x)} \\
	& \leq  \sum_{\ell=1}^{m-|t|} d^{(\ell-1)/2} \|y-x\|^\ell \|f^{(|t|+\ell)}(x) \| \\
&\hspace{4cm}+ d^{m/2} \|y-x\|^m \sup_{z \in B_x(\|y-x\|)} \vertiii{f^{(m)}(z)-f^{(m)}(x)} \\
	& \leq a(f(x)) f(x) \|y-x\| \biggl\{ \frac{1}{1-d^{1/2}\|y-x\|} + d^{m/2} \|y-x\|^{\beta-1} \biggr\} \\
	& \leq \frac{15 d^{1/2}}{7} a(f(x)) f(x) \|y-x\|,
\end{align*}
as required.
\end{proof}

\begin{lemma}
\label{Lemma:Vw}
Under the conditions of Theorem~\ref{unifweightedclt} in the main text, we have that
\[
	\sup_{k \in \{k_0^*, \ldots, k_1^*\}} \sup_{f \in \mathcal{F}_{d,\theta}} \mathbb{E}_f [ \{\tilde{V}_n^w - V(f)\}^2] \rightarrow 0.
\]
\end{lemma}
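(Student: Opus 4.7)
The proof proceeds via the bias--variance decomposition
\[
\mathbb{E}_f\bigl[\{\tilde V_n^w - V(f)\}^2\bigr] = \{\mathbb{E}_f \tilde V_n^w - V(f)\}^2 + \mathrm{Var}_f(\tilde V_n^w),
\]
and I would show that each summand tends to zero uniformly over $k \in \{k_0^*,\ldots,k_1^*\}$ and $f \in \mathcal{F}_{d,\theta}$. Write $Y_i := \sum_{j=1}^k w_j \log^2 \xi_{(j),i}$, so that $\tilde V_n^w = n^{-1}\sum_i Y_i - (\hat H_n^w)^2$.

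For the bias, I would adapt the expansion~\eqref{Eq:LongDisplay} used in the proof of Lemma~\ref{varthm} (replacing the outer index $k$ by each $j$ in the support of $w$) to obtain
\[
\mathbb{E}_f[\log^2 \xi_{(j),1}] = \int_{\mathcal{X}} f\log^2 f + \{\Psi'(j) - \Psi'(n)\} + o(1),
\]
uniformly in $f$ and $j$. Because the support of $w$ has cardinality at most $d$ and lies in $\{\lfloor k/d\rfloor,\ldots,k\}$, we have $\|w\|_1 \le d^{1/2}\|w\|_2 = O(1)$, and hence $|\sum_j w_j (\Psi'(j)-\Psi'(n))| \le \|w\|_1 \Psi'(\lfloor k/d\rfloor) = O(1/k) \to 0$. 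Combined with $\sum_j w_j = 1$, this gives $\mathbb{E}_f[n^{-1}\sum_i Y_i] = V(f) + H(f)^2 + o(1)$. On the other hand, Corollary~\ref{Cor:WeightedBias} together with Lemma~\ref{varthm} yield
\[
\mathbb{E}_f[(\hat H_n^w)^2] = \{\mathbb{E}_f \hat H_n^w\}^2 + \mathrm{Var}_f \hat H_n^w = H(f)^2 + o(1),
\]
using the uniform boundedness of $|H(f)|$ from Lemma~\ref{Lemma:VfBounds}(i). Hence $\mathbb{E}_f\tilde V_n^w \to V(f)$ uniformly.

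For the variance, set $\eta_n := \hat H_n^w - H_n^*$, so that~\eqref{Eq:Eff} in Theorem~\ref{unifweightedclt} gives $\|\eta_n\|_{L^2(\mathbb P_f)}^2 = o(n^{-1})$ uniformly. The expansion $(\hat H_n^w)^2 = (H_n^*)^2 + 2 H_n^* \eta_n + \eta_n^2$ leads to
\[
\mathrm{Var}_f\tilde V_n^w \le 4\mathrm{Var}_f\Bigl(n^{-1}\sum_i Y_i\Bigr) + 4\mathrm{Var}_f(H_n^*)^2 + 16\mathrm{Var}_f(H_n^*\eta_n) + 4\mathrm{Var}_f\eta_n^2.
\]
The first summand equals $n^{-1}\mathrm{Var}_f Y_1 + (1-n^{-1})\mathrm{Cov}_f(Y_1,Y_2)$; extending~\eqref{Eq:LotsofTerms} to higher powers of $|\log u_{x,s}|$ shows that $\mathrm{Var}_f Y_1$ is polylogarithmic in $n$, while $\mathrm{Cov}_f(Y_1,Y_2) = o(1)$ follows by repeating the joint-distribution analysis leading to~\eqref{Eq:LongerDisplay}, with $\log$ replaced by $\log^2$ throughout. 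The second summand $\mathrm{Var}_f(H_n^*)^2 = O(n^{-1})$ is a standard fourth-moment calculation for i.i.d.\ averages, using the fourth-moment bound from Lemma~\ref{Lemma:VfBounds}(i).

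The hard part is controlling $\mathrm{Var}_f\eta_n^2$ and $\mathrm{Var}_f(H_n^*\eta_n)$: both reduce via Cauchy--Schwarz to an $L^4$ bound on $\eta_n$, whereas Theorem~\ref{unifweightedclt} supplies only the $L^2$ bound $\|\eta_n\|_2 = o(n^{-1/2})$. I would bridge this gap by interpolation. Since $|\eta_n| \le |\hat H_n^w| + |H_n^*|$, Minkowski's inequality for averages of identically distributed variables reduces the problem to bounding $\|\log\xi_{(j),1}\|_p$ and $\|\log f(X_1)\|_p$ for some $p>4$; combining~\eqref{Eq:LotsofTerms} with the moment properties of $\mathcal{F}_{d,\theta}$ (cf.\ Lemma~\ref{Lemma:VfBounds}(i)) gives that $\|\eta_n\|_p$ is polylogarithmic in $n$ for any fixed $p \ge 2$. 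Log-convexity of the $L^p$ norm with exponents $(2,6)$ then yields
\[
\|\eta_n\|_4^4 \le \|\eta_n\|_2\|\eta_n\|_6^3 = o(n^{-1/2})\cdot O(\mathrm{polylog}(n)) = o(1).
\]
Consequently $\mathrm{Var}_f\eta_n^2 \le \mathbb{E}_f\eta_n^4 \to 0$ and $\mathrm{Var}_f(H_n^*\eta_n) \le \|H_n^*\|_4^2\|\eta_n\|_4^2 \to 0$, both uniformly in $f$ and $k$, completing the argument.
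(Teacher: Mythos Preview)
Your bias argument matches the paper's essentially line for line. The variance argument is correct but takes a genuinely different route from the paper, so a brief comparison is warranted.

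The paper does not pass through the oracle decomposition $\hat H_n^w = H_n^* + \eta_n$. Instead it establishes directly, by the same change-of-variables machinery used for the bias, that for every $m \in \mathbb{N}$ and every $j \in \mathrm{supp}(w)$,
\[
\mathbb{E}_f\bigl[\log^m\bigl(\xi_{(j),1}f(X_1)\bigr)\bigr] \to 0
\]
uniformly. From this single fact, Cauchy--Schwarz immediately gives $\mathbb{E}_f\log^4\xi_{(j),1} \to \mathbb{E}_f\log^4 f(X_1)$ (hence $\mathrm{Var}_f Y_1 = O(1)$), and the same device handles all the cross-observation covariances in $\mathrm{Var}_f\tilde V_n^w$. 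So where you invoke the heavy joint-distribution analysis behind~\eqref{Eq:LongerDisplay} to get $\mathrm{Cov}_f(Y_1,Y_2) = o(1)$, the paper gets this almost for free: expand $\log^2\xi_{(j),i} = \{\log(\xi_{(j),i}f(X_i)) - \log f(X_i)\}^2$, note that the $\log f(X_1)$--$\log f(X_2)$ covariances vanish by independence, and bound the remaining terms by Cauchy--Schwarz using $\|\log(\xi_{(j),1}f(X_1))\|_p \to 0$. This is both shorter and avoids redoing the multinomial/normal approximation with $\log$ replaced by $\log^2$.

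What your route buys: by writing $(\hat H_n^w)^2 = (H_n^*)^2 + 2H_n^*\eta_n + \eta_n^2$ and interpolating $\|\eta_n\|_4^4 \le \|\eta_n\|_2\|\eta_n\|_6^3$, you reuse the conclusion of Theorem~\ref{unifweightedclt} as a black box and make explicit the control of the $(\hat H_n^w)^2$ contribution to $\mathrm{Var}_f\tilde V_n^w$---a term the paper's displayed variance bound~\eqref{Eq:VarVw} does not isolate. The interpolation trick is elegant, though note that your polylogarithmic bound on $\|\eta_n\|_6$ is looser than necessary: the same moment fact the paper proves actually gives $\|\log\xi_{(j),1}\|_p = O(1)$, not merely polylogarithmic.
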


\begin{proof}[Proof of Lemma~\ref{Lemma:Vw}]
For $w = (w_1,\ldots,w_k)^T \in \mathcal{W}^{(k)}$, write $\mathrm{supp}(w) := \{j : w_j \neq 0\}$.  Then
\begin{align*}
	&|\mathbb{E}_f \tilde{V}_n^w - V(f) | \\
	&\leq \biggl|\sum_{j=1}^k w_j \mathbb{E}_f \log^2 \xi_{(j),1}- \int_{\mathcal{X}} f \log^2f \biggr| + \bigl|\mathbb{E}_f\{(\hat{H}_n^w)^2\} - H(f)^2\bigr| \\
	& \leq \|w\|_1 \! \! \max_{j \in \text{supp}(w)} \biggl| \mathbb{E}_f \log^2 \xi_{(j),1} \! - \! \int_{\mathcal{X}} \! f \log^2f \biggr| + \mathrm{Var}_f \hat{H}_n^w + | ( \mathbb{E}_f \hat{H}_n^w )^2 \! - \! H(f)^2 |. 
\end{align*}
Thus, by Theorem~\ref{unifweightedclt} in the main text,~\eqref{Eq:LongDisplay} in the proof of that result and Lemma~\ref{Lemma:VfBounds}(i), we have that $\sup_{k \in \{k_0^*, \ldots, k_1^*\}} \sup_{f \in \mathcal{F}_{d,\theta}} |\mathbb{E}_f \tilde{V}_n^w - V(f) | \rightarrow 0$. Now, 
\begin{align}
\label{Eq:VarVw}
	\mathrm{Var}_f \tilde{V}_n^w \leq \frac{\|w\|_1^2}{n} \max_{j \in \text{supp}(w)} &\mathrm{Var}_f \log^2 \xi_{(j),1} \nonumber \\
	&+ \|w\|_1^2 \max_{j,\ell \in \text{supp}(w)} \bigl| \mathrm{Cov}_f(\log^2 \xi_{(j),1} , \log^2 \xi_{(\ell),2}) \bigr|.
\end{align}
Let $a_{n,j}^- := (j - 3j^{1/2} \log^{1/2}n) \vee 0$ and $a_{n,j}^+ := (j + 3j^{1/2} \log^{1/2}n) \wedge (n-1)$.  Mimicking arguments in the proof of Theorem~\ref{unifweightedclt}, for any $m \in \mathbb{N}$, $j \in \text{supp}(w)$ and $\epsilon > 0$,
\begin{align*}
	&\mathbb{E}_f\bigl\{ \log^m (\xi_{(j),1} f(X_1))\bigr\} \\
&\hspace{2cm}= \int_\mathcal{X} f(x) \int_0^\infty \log^m \biggl( \frac{V_d(n-1)f(x)h_x^{-1}(s)^d}{e^{\Psi(j)}} \biggr) \mathrm{B}_{j,n-j}(s) \,ds \,dx \nonumber \\
	&\hspace{2cm}= \int_\frac{a_{n,j}^-}{n-1}^\frac{a_{n,j}^+}{n-1} \log^m \biggl( \frac{(n-1)s}{e^{\Psi(j)}} \biggr) \mathrm{B}_{j,n-j}(s) \,ds \\
&\hspace{4cm} + O \biggl( \max \biggl\{ \frac{k^{\beta/d}}{n^{\beta/d}}\log^{m-1}n \, , \, \frac{k^{\frac{\alpha}{\alpha+d}-\epsilon}}{n^{\frac{\alpha}{\alpha+d}-\epsilon}} \biggr\} \biggr) \rightarrow 0,
\end{align*}
uniformly for $j \in \text{supp}(w)$, $k \in \{k_0^*, \ldots, k_1^*\}$ and $f \in \mathcal{F}_{d,\theta}$.  Moreover, by Cauchy--Schwarz, we can now show, for example, that
\begin{align*}
	\mathbb{E}_f \log^4 \xi_{(j),1} = \mathbb{E}_f[ \{ \log (\xi_{(j),1} f(X_1)) - \log f(X_1) \}^4] \rightarrow \mathbb{E}_f \log^4 f(X_1)
\end{align*}
uniformly for $j \in \text{supp}(w)$, $k \in \{k_0^*, \ldots, k_1^*\}$ and $f \in \mathcal{F}_{d,\theta}$. The result follows upon noting that we may use a similar approach for the covariance term in~\eqref{Eq:VarVw} to see that $\sup_{k \in \{k_0^*, \ldots, k_1^*\}} \sup_{f \in \mathcal{F}_{d,\theta}} \Var \tilde{V}_n^w \rightarrow 0$.
\end{proof}

\subsection{Proof of Proposition~\ref{Prop:Examples}}

\begin{proof}[Proof of Proposition~\ref{Prop:Examples}]
In each of the three examples, we provide $\theta = (\alpha,\beta,\gamma,\nu,a) \in \Theta$ such that $f \in \mathcal{F}_{d,\theta}$.  In fact, $\beta > 0$ may be chosen arbitrarily in each case.

(i) We may choose any $\alpha > 0$, and then set $\nu = d2^{\alpha/2-1}\Gamma\bigl(\frac{\alpha}{2} + \frac{d}{2}\bigr)/\Gamma\bigl(1+\frac{d}{2}\bigr)$.  We may also set $\gamma = (2\pi)^{-d/2}$.  It remains to find $a \in \mathcal{A}$ such that~\eqref{Eq:GrowthCond} holds.  Write $H_r(y) := (-1)^r e^{y^2/2}\frac{d^r}{dy^r}e^{-y^2/2}$ for the $r$th Hermite polynomial, and note that $|H_r(y)| \leq p_r(|y|)$, where $p_r$ is a polynomial of degree $r$ with non-negative coefficients.  Using multi-index notation for partial derivatives, if $t = (t_1,\ldots,t_d) \in \{0,1,\ldots,\}^d$ with $|t| := t_1 + \ldots + t_d$, we have
\[
\biggl|\frac{\partial f^t(x)}{\partial x^t}\biggr| = f(x)\prod_{j=1}^d |H_{t_j}(x_j)| \leq f(x)\prod_{j=1}^d p_{t_j}(\|x\|) \leq f(x)q_{|t|}(\|x\|),
\]
for some polynomial $q_r$ of degree $r$, with non-negative coefficients.  It follows that if $y \in B_x^\circ(1)$, then for any $\beta > 0$ with $m = \lceil \beta \rceil -1$, 
\begin{align*}
\frac{\|f^{(m)}(x) - f^{(m)}(y)\|}{f(x)\|y-x\|^{\beta - m}} &\leq \frac{d^{m/2}}{f(x)\|y-x\|^{\beta - m}}\max_{t:|t|=m}\biggl|\frac{\partial f^t(x)}{\partial x^t} - \frac{\partial f^t(y)}{\partial x^t}\biggr| \\
&\leq \frac{d^{(m+1)/2}}{f(x)}\max_{t:|t|=m+1} \sup_{w \in B_0(1)} \biggl|\frac{\partial f^t(x+w)}{\partial x^t}\biggr| \\
&\leq d^{(m+1)/2}\sup_{w \in B_0(1)} \frac{f(x+w)q_{m+1}(\|x+w\|)}{f(x)} \\
&\leq d^{(m+1)/2}e^{\|x\|} q_{m+1}(\|x\|+1).
\end{align*}
Similarly, 
\[
\max_{r=1,\ldots,m} \frac{\|f^{(r)}(x)\|}{f(x)} \leq d^{m/2}\max_{r=1,\ldots,m} q_r(\|x\|).
\]
Write $g(\delta) := \bigl\{-2 \log \bigl(\delta (2 \pi)^{d/2}\bigr)\bigr\}^{1/2}$ and define $a \in \mathcal{A}$ by setting $a(\delta) := \max\{1,\tilde{a}(\delta)\}$, where
\begin{align*}
\tilde{a}(\delta) &:= d^{m/2}\sup_{x:\|x\| \leq g(\delta)} \max\biggl\{\max_{r=1,\ldots,m} q_r(\|x\|) \, , \, d^{1/2}e^{\|x\|} q_{m+1}(\|x\|+1)\biggr\} \\
&= d^{m/2}\max\biggl\{\max_{r=1,\ldots,m} q_r\bigl(g(\delta)\bigr) \, , \, d^{1/2}e^{g(\delta)} q_{m+1}\bigl(g(\delta)+1\bigr)\biggr\}.
\end{align*}
Then $\sup_{x:f(x) \geq \delta} M_{f,a,\beta}(x) \leq a(\delta)$ and $a(\delta) = o(\delta^{-\epsilon})$ for every $\epsilon > 0$, so~\eqref{Eq:GrowthCond} holds.

(ii) We may choose any $\alpha < \rho$, and set 
\[
\nu = d2^{\alpha/2-1}\frac{\Gamma\bigl(\frac{\alpha}{2} + \frac{d}{2}\bigr)}{\Gamma\bigl(1+\frac{d}{2}\bigr)}\frac{(\rho/2)^{\alpha/2}\Gamma\bigl(\frac{\rho-\alpha}{2}\bigr)}{\Gamma\bigl(\frac{\rho}{2}\bigr)}.
\]
We may also set $\gamma = \frac{\Gamma\bigl(\frac{\rho}{2} + \frac{d}{2}\bigr)}{\Gamma(\rho/2)\rho^{\alpha/2}\pi^{d/2}}$.  To verify~\eqref{Eq:GrowthCond} for suitable $a \in \mathcal{A}$, we note by induction, that if $t = (t_1,\ldots,t_d) \in \{0,1,\ldots,\}^d$ with $|t| := t_1 + \ldots + t_d$, then
\[
\biggl|\frac{\partial f^t(x)}{\partial x^t}\biggr| \leq \frac{f(x)q_{|t|}(\|x\|)}{(1+\|x\|^2/\rho)^{|t|}},
\]
where $q_r$ is a polynomial of degree $r$ with non-negative coefficients.  Thus, similarly to the Gaussian example, for any $\beta > 0$ with $m = \lceil \beta \rceil -1$, 
\begin{align*}
\sup_{x \in \mathbb{R}^d} &\sup_{y \in B_x^\circ(1)} \frac{\|f^{(m)}(x) - f^{(m)}(y)\|}{f(x)\|y-x\|^{\beta - m}} \\
&\hspace{2cm}\leq d^{(m+1)/2}\sup_{x \in \mathbb{R}^d} \sup_{w \in B_0(1)} \frac{f(x+w)q_{m+1}(\|x+w\|)}{f(x)(1+\|x\|^2/\rho)^{m+1}} =: A_{d,m,\rho}^{(1)},
\end{align*}
say, where $A_{d,m,\rho}^{(1)} \in [0,\infty)$.  Similarly,
\[
\sup_{x \in \mathbb{R}^d} \max_{r=1,\ldots,m} \frac{\|f^{(r)}(x)\|}{f(x)} \leq d^{m/2}\sup_{x \in \mathbb{R}^d} \max_{r=1,\ldots,m} \frac{q_r(\|x\|)}{(1+\|x\|^2/\rho)^r} =: A_{d,m,\rho}^{(2)},
\]
say, where $A_{d,m,\rho}^{(2)} \in [0,\infty)$.  Now defining $a \in \mathcal{A}$ to be the constant function
\[
a(\delta) := \max\{1,A_{d,m,\rho}^{(1)},A_{d,m,\rho}^{(2)}\},
\]
we again have that $\sup_{x:f(x) \geq \delta} M_{f,a,\beta}(x) \leq a(\delta)$, so~\eqref{Eq:GrowthCond} holds.

(iii) We may take any $\alpha > 0$ and $\nu = 1$, $\gamma = 3$.  To verify~\eqref{Eq:GrowthCond}, fix $\beta > 0$, set $m := \lceil \beta \rceil - 1$, and define $a \in \mathcal{A}$ by
\[
a(\delta) := A_m\max\biggl\{1 \, , \, \log^{2(m+1)}\Bigl(\frac{1}{\delta}\Bigr)\biggr\},
\]
for some $A_m \geq 1$ depending only on $m$.  Then, by induction, we find that for some constants $A_m',B_m' > 0$ depending only on~$m$, and $x \in (-1,1)$
\begin{align*}
M_{f,a,\beta}(x) &\leq \max\biggl\{\max_{r=1,\ldots,m}\frac{A_r'}{(1-x^2)^{2r}} \, , \, \sup_{y: 0 < |y-x| \leq r_a(x)} \frac{A_{m+1}'f(y)}{(1-y^2)^{2(m+1)}f(x)}\biggr\} \\
&\leq \frac{B_{m+1}'}{(1-x^2)^{2(m+1)}} \leq a\bigl(f(x)\bigr),
\end{align*}
provided $A_m$ in the definition of $a$ is chosen sufficiently large.  Hence~\eqref{Eq:GrowthCond} again holds.
\end{proof}

\subsection{Proof of Proposition~\ref{Prop:WeakCond}}

\begin{proof}[Proof of Proposition~\ref{Prop:WeakCond}]
To deal with the integrals over $\mathcal{X}_n^c$, we first observe that by~\eqref{Eq:LotsofTerms} in the main text there exists a constant $C_{d,f} > 0$, depending only on $d$ and $f$, such that
\begin{align}
\label{Eq:Xnc1}
&\int_{\mathcal{X}_n^c} f(x) \int_0^1 \mathrm{B}_{k,n-k}(s) \log u_{x,s} \, ds \, dx \nonumber \\
&\leq C_{d,f} \int_{\mathcal{X}_n^c} f(x)\biggl\{\log n + \log\biggl(1+ \frac{\|x\|}{\mu_\alpha^{1/\alpha}(f)}\biggr)\biggr\} \, dx = O\bigl(\max\{q_n \log n,q_n^{1-\epsilon}\}\bigr), 
\end{align}
for every $\epsilon > 0$.  Moreover, 
\begin{equation}
\label{Eq:Xnc2}
\biggl|\int_{\mathcal{X}_n^c} f(x) \log f(x) \, dx\biggr| = O(q_n^{1-\epsilon}),
\end{equation}
for every $\epsilon>0$.  Now, a slightly simpler argument than that used in the proof of Lemma~\ref{Lemma:hxinvbounds}(ii) in the main text gives that for $r \in (0,r_x]$, we have
\[
|h_x(r) - V_df(x)r^d| \leq \frac{dV_d}{d+\tilde{\beta}} C_{n,\tilde{\beta}}(x)r^{d+\tilde{\beta}}.
\]
We deduce, again using a slightly simplified version of the argument in Lemma~\ref{Lemma:hxinvbounds}(ii) in the main text, that there exists $n_0 \in \mathbb{N}$ such that for $n \geq n_0$, $s \in [0,\frac{a_n}{n-1}]$ and $x \in \mathcal{X}_n$, we have
\begin{equation}
\label{Eq:Simplerhxinv}
\bigl|V_df(x)h_x^{-1}(s)^d - s\bigr| \leq \frac{2dV_d^{-\tilde{\beta}/d}}{d+\tilde{\beta}}s^{1+\tilde{\beta}/d}\frac{C_{n,\tilde{\beta}}(x)}{f(x)^{1+\tilde{\beta}/d}} \leq \frac{s}{2}.
\end{equation}
It follows from~\eqref{Eq:Xnc1},~\eqref{Eq:Xnc2},~\eqref{Eq:Simplerhxinv} and an almost identical argument to that leading to~\eqref{Eq:R2bound} in the main text that for every $n \geq n_0$ and $\epsilon > 0$,
\begin{align*}
&|\mathbb{E}_f(\hat{H}_n) - H| \leq \biggl|\int_{\mathcal{X}_n} f(x) \int_0^\frac{a_n}{n-1} \mathrm{B}_{k,n-k}(s)\log\biggl(\frac{V_df(x)h_x^{-1}(s)^d}{s}\biggr) \,ds\,dx\biggr| \\
&\hspace{6cm}+ O\bigl(\max\{q_n^{1-\epsilon},q_n \log n,n^{-1}\}\bigr) \\
&\leq 2 \int_{\mathcal{X}_n} f(x) \int_0^\frac{a_n}{n-1} \mathrm{B}_{k,n-k}(s)\biggl|\frac{V_df(x)h_x^{-1}(s)^d - s}{s}\biggr| \, ds \, dx \\
&\hspace{6cm}+ O\bigl(\max\{q_n^{1-\epsilon},q_n \log n,n^{-1}\}\bigr) \\
&\leq \frac{4dV_d^{-\tilde{\beta}/d}}{d+\tilde{\beta}}\frac{\mathrm{B}_{k+\tilde{\beta}/d,n-k}}{\mathrm{B}_{k,n-k}}\int_{\mathcal{X}_n} \frac{C_{n,\tilde{\beta}}(x)}{f(x)^{\tilde{\beta}/d}} \, dx + O\bigl(\max\{q_n^{1-\epsilon},q_n \log n,n^{-1}\}\bigr),
\end{align*}
as required.
\end{proof}

\subsection{Completion of the proof of Lemma~\ref{varthm}}
\label{Appendix:varproof}

To prove Lemma~\ref{varthm}, it remains to bound several error terms arising from arguments that approximate the variance of the unweighted Kozachenko--Leonenko estimator $\hat{H}_n$, and then to show how these arguments may be adapted to yield the desired asyptotic expansion for $\mathrm{Var}(\hat{H}_n^w)$.  

\subsubsection{Bounds on $S_1,\ldots,S_5$} 
\label{Appendix:S} 

\emph{To bound $S_1$:} By similar methods to those used to bound $R_1$ in the proof of Lemma~\ref{Lemma:Bias} in the main text, it is straightforward to show that for every $\epsilon > 0$, we have 
\[
S_1 = \int_{\mathcal{X}_n^c} f(x) \int_0^1 \mathrm{B}_{k,n-k}(s) \log^2 u_{x,s} \, ds \, dx = O\biggl(\frac{k^{\frac{\alpha}{\alpha + d} - \epsilon}}{n^{\frac{\alpha}{\alpha + d} - \epsilon}}\biggr).
\]

\bigskip

\noindent \emph{To bound $S_2$:} For every $\epsilon > 0$, we have that
\begin{align*}
	S_2 &= \int_{\mathcal{X}_n} f(x) \int_\frac{a_n}{n-1}^1 \mathrm{B}_{k,n-k}(s) \log^2 u_{x,s} \, ds \, dx =o(n^{-(3-\epsilon)}),
\end{align*}
by very similar arguments to those used to bound $R_2$ in the proof of Lemma~\ref{Lemma:Bias} in the main text.

\bigskip

\noindent \emph{To bound $S_3$:} We have
\begin{align*}
	&\log^2 u_{x,s} - \log^2 \biggl( \frac{(n-1)s}{e^{\Psi(k)}f(x)} \biggr) \\
&= \biggl\{2 \log \biggl( \frac{(n-1)s}{e^{\Psi(k)}f(x)} \biggr) + \log \biggl(\frac{V_df(x)h_x^{-1}(s)^d}{s} \biggr)\biggr\}\log \biggl(\frac{V_df(x)h_x^{-1}(s)^d}{s} \biggr).
\end{align*}
It therefore follows from Lemma~\ref{Lemma:hxinvbounds}(ii) in the main text that for every $\epsilon > 0$,
\begin{align*}
S_3 &= \int_{\mathcal{X}_n} f(x) \int_0^{\frac{a_n}{n-1}} \mathrm{B}_{k,n-k}(s) \biggl\{\log^2 u_{x,s} - \log^2\biggl( \frac{(n-1)s}{e^{\Psi(k)}f(x)} \biggr)\biggr\} \, ds \, dx \\
&= O\biggl\{\max\biggl( \frac{k^{\beta/d}}{n^{\beta/d}} \log n \, , \, \frac{k^{\frac{\alpha}{\alpha+d}-\epsilon}}{n^{\frac{\alpha}{\alpha+d}-\epsilon}} \biggr)\biggr\}.
\end{align*}

\bigskip

\noindent \emph{To bound $S_4$:} A simplified version of the argument used to bound $R_4$ in Lemma~\ref{Lemma:Bias} of the main text shows that for every $\epsilon > 0$, 
\[
S_4 = \int_{\mathcal{X}_n} f(x) \int_{\frac{a_n}{n-1}}^1 \mathrm{B}_{k,n-k}(s) \log^2\biggl( \frac{(n-1)s}{e^{\Psi(k)}f(x)} \biggr) \, ds \, dx = o(n^{-(3-\epsilon)}).
\]

\noindent \emph{To bound $S_5$:} Very similar arguments to those used to bound $R_1$ in Lemma~\ref{Lemma:Bias} in the main text show that for every $\epsilon > 0$, 
\[
S_5 = \int_{\mathcal{X}_n^c} f(x) \log^2 f(x) \, dx = O\biggl( \frac{k^{\frac{\alpha}{\alpha+d}-\epsilon}}{n^{\frac{\alpha}{\alpha+d}-\epsilon}} \biggr).
\]

\subsubsection{Bounds on $T_1$, $T_2$ and $T_3$}
\label{Appendix:T}

\noindent \emph{To bound $T_1$}: Let $\mathrm{B} \sim \mathrm{Beta}(k-1,n-k-1)$.  By~\eqref{Eq:LotsofTerms} in the main text, for every $\epsilon > 0$,
\begin{align*}
T_{11} &:= \biggl|\int_{\mathcal{X}_n^c \times \mathcal{X}_n^c} f(x)f(y) \log f(y) \int_{\tilde{u}_{n,x,y}}^\infty \log (uf(x)) \,d(\tilde{F}_{n,x}-F^-_{n,x})(u) \,dy \,dx\biggr| \\
&\leq \frac{n-2}{n-k-1}\int_{\mathcal{X}_n^c \times \mathcal{X}_n^c} f(x)f(y) |\log f(y)| \\
&\hspace{2cm}\int_0^1 \bigl|\log (u_{x,s}f(x))\bigr|\mathrm{B}_{k-1,n-k-1}(s)\biggl|1 - \frac{(n-2)s}{k-1}\biggr| \, ds \,dy \,dx \\
&\lesssim \int_{\mathcal{X}_n^c \times \mathcal{X}_n^c} f(x)f(y) |\log f(y)|\biggl[\mathbb{E}\biggl\{\biggl(\log \frac{1}{\mathrm{B}} + \log \frac{1}{1-B}\biggr)\biggl|1 - \frac{(n-2)\mathrm{B}}{k-1}\biggr|\biggr\} \\
&\hspace{0.5cm}+ \biggl\{\log n + |\log f(x)| + \log\biggl(1 +\frac{\|x\|}{\mu_\alpha^{1/\alpha}(f)}\biggr)\biggr\}\mathbb{E}\biggl|1- \frac{(n-2)\mathrm{B}}{k-1}\biggr|\biggr] \,dy \,dx \\
&= o\biggl(\frac{k^{-\frac{1}{2} + \frac{2\alpha}{\alpha+d} - \epsilon}}{n^{\frac{2\alpha}{\alpha+d} - \epsilon}}\biggr),
\end{align*}
where we used the Cauchy--Schwarz inequality and elementary properties of beta random variables to obtain the final bound.

Now let 
\[
u_n^*(x) := u_{x,a_n/(n-1)} = \frac{V_d(n-1)h_x^{-1}(\frac{a_n}{n-1})^d}{e^{\Psi(k)}},
\]
and consider
\[
T_{12} := \biggl|\int_{\mathcal{X}_n^c} \int_{\mathcal{X}_n} f(x)f(y) \log f(y) \int_{\tilde{u}_{n,x,y}}^\infty \log (uf(x)) \,d(\tilde{F}_{n,x}-F^-_{n,x})(u) \,dy \,dx\biggr|.
\]
If $\tilde{u}_{n,x,y} \geq u_n^*(x)$, then by very similar arguments to those used to bound $R_1$ and $R_2$ (cf.~\eqref{Eq:LotsofTerms} and~\eqref{Eq:betatail} in the main text), together with Cauchy--Schwarz, 
\begin{align}
\label{Eq:tilde>*}
\int_{\tilde{u}_{n,x,y}}^\infty \bigl|\log (uf(x))\bigr| \,&d(\tilde{F}_{n,x}-F^-_{n,x})(u) \nonumber \\
&\leq \int_\frac{a_n}{n-1}^1 |\log(u_{x,s}f(x))|\{ \mathrm{B}_{k-1,n-k}(s) + \mathrm{B}_{k,n-k-1}(s) \} \,ds  \nonumber\\
&\lesssim \frac{\log n + |\log f(x)| + \log\Bigl(1 + \frac{\|x\|}{\mu_\alpha^{1/\alpha}(f)}\Bigr)}{n^{3-\epsilon}},
\end{align}
for every $\epsilon > 0$.  On the other hand, if $\tilde{u}_{n,x,y} < u_n^*(x)$, then $\|x-y\| < r_{n,u_n^*(x)} + r_{n,u_n^*(y)}$, 
where we have added the $r_{n,u_n^*(y)}$ term to aid a calculation later in the proof.  Define the sequence 
\[
\rho_n := \bigl[c_n \log^{1/d} (n-1)\bigr]^{-1}.
\]
From Lemma~\ref{Lemma:hxinvbounds}(ii) in the main text, 
\[
	\sup_{y \in \mathcal{X}_n} r_{n,u_n^*(y)} = \sup_{y \in \mathcal{X}_n} h_y^{-1}\Bigl(\frac{a_n}{n-1}\Bigr) \lesssim \sup_{y \in \mathcal{X}_n} \biggl\{ \frac{k \log n}{n f(y)} \biggr\}^{1/d} \! \! \! \! \leq \biggl( \frac{k \log n}{n \delta_n} \biggr)^{1/d} \! \! \! = o(\rho_n).
\]
Now suppose that $x \in \mathcal{X}_n^c$ and $y \in \mathcal{X}_n$ satisfy $\|y-x\| \leq \rho_n$.  
Choose $n_0 \in \mathbb{N}$ large enough that $r_{n,u_n^*(y)} \leq \rho_n/2$ for all $y \in \mathcal{X}_n$, and that $\log(n-1) \geq \max\{(3/2)^d(8d^{1/2})^{d/\beta}, 12 V_d^{-1} 2^d\}$ 
for all $n \geq n_0$ and $k \in \{k_0^*,\ldots,k_1^*\}$. Then when $\beta \in (0,1]$ and $n \geq n_0$, using the fact that $B_x(\rho_n/2) \subseteq B_y(3\rho_n/2)$, we have
\begin{align}
\label{Eq:Bxrhon}
	\int_{B_x(\rho_n/2)}& f(w) \,dw  \geq V_d f(y) (\rho_n/2)^d - V_d a(f(y))f(y) (\rho_n/2)^d (3 \rho_n/2)^\beta \nonumber \\
	& \geq V_d f(y) (\rho_n/2)^d \{ 1 - (3c_n \rho_n/2)^\beta \} \geq \frac{1}{2} V_d (\rho_n/2)^d \delta_n \geq \frac{a_n}{n-1}.
\end{align}
Hence, for all $n \geq n_0$, $x \in \mathcal{X}_n^c$, $y \in \mathcal{X}_n$ with $\|y-x\| \leq \rho_n$ and $k \in \{k_0^*,\ldots,k_1^*\}$,
\begin{equation}
\label{Eq:rhon}
r_{n,u_n^*(x)} + r_{n,u_n^*(y)} \leq \rho_n.	
\end{equation}
On other hand, suppose instead that $x \in \mathcal{X}_n^c$ and $\rho_x^* := \inf_{y \in \mathcal{X}_n} \|y - x\| \geq \rho_n$.  Since $\mathcal{X}_n$ is a closed subset of $\mathbb{R}^d$, we can find $y^* \in \mathcal{X}_n$ such that $\|y^*-x\| = \rho_x^*$, and set $\tilde{x} := \frac{\rho_n}{\rho_x^*} x + \bigl(1 -\frac{\rho_n}{\rho_x^*}\bigr)y^*$.  Then $\|\tilde{x} - y^*\| = \rho_n$, so from~\eqref{Eq:Bxrhon}, we have $r_{n,u_n^*(\tilde{x})} \leq \rho_n/2$ for $n \geq n_0$ and $k \in \{k_0^*,\ldots,k_1^*\}$.  Since $B_{\tilde{x}}(\rho_n/2) \subseteq B_x(\rho_x^*-\rho_n/2)$, we deduce that $r_{n,u_n^*(x)} \leq \rho_x^*-\rho_n/2$ and 
\begin{equation}
\label{Eq:empty}
	\{y \in \mathcal{X}_n: \|x-y\| < r_{n,u_n^*(x)}+r_{n,u_n^*(y)} \} = \emptyset
\end{equation}
for $n \geq n_0$ and $k \in \{k_0^*,\ldots,k_1^*\}$.  But for $n \geq n_0$,
\begin{equation}
\label{Eq:tendstozero}
	\sup_{x \in \mathcal{X}_n^c} \sup_{y \in \mathcal{X}_n:\|y-x\| \leq \rho_n} \frac{1}{f(y)} |f(x)-f(y)| \leq \frac{15d^{1/2}}{7} (c_n \rho_n)^\beta < \frac{1}{2},
\end{equation}
so that if $x \in \mathcal{X}_n^c$, $y \in \mathcal{X}_n$ and $\|x-y\| \leq \rho_n$, then $f(y) < 2 \delta_n$ for $n \geq n_0$ and $k \in \{k_0^*,\ldots,k_1^*\}$.  

It therefore follows from~\eqref{Eq:tilde>*},~\eqref{Eq:rhon},~\eqref{Eq:empty},~\eqref{Eq:tendstozero} and the argument used to bound $T_{11}$ that for each $\epsilon > 0$ and $n \geq n_0$,
\begin{align*}
T_{12} &\leq \int_{\mathcal{X}_n^c} \int_{\mathcal{X}_n} f(x)f(y) |\log f(y)| \mathbbm{1}_{\{\|x-y\| < r_{n,u_n^*(x)} + r_{n,u_n^*(y)}\}} \\
&\hspace{2cm}\int_0^\infty |\log (uf(x))| \,d(\tilde{F}_{n,x}-F^-_{n,x})(u) \,dy \,dx + o(n^{-2}) \\
&\leq \int_{\mathcal{X}_n^c} \int_{y:f(y)<2\delta_n} f(x)f(y) |\log f(y)| \\
&\hspace{2cm} \int_0^\infty |\log (uf(x))| \,d(\tilde{F}_{n,x}-F^-_{n,x})(u) \,dy \,dx + o(n^{-2}) \\
&= o\biggl(\frac{k^{-\frac{1}{2} + \frac{2\alpha}{\alpha+d} - \epsilon}}{n^{\frac{2\alpha}{\alpha+d} - \epsilon}}\biggr).
\end{align*}
Finally for $T_1$, we define
\begin{align*}
T_{13} := \biggl|\int_{\mathcal{X}_n} \int_{B_x^c\bigl(\frac{r_{n,1}d_n}{f(x)^{1/d}}\bigr)} f(x) &f(y) \log f(y) \\
	& \int_{\tilde{u}_{n,x,y}}^\infty \log \bigl(uf(x)\bigr) \, d(\tilde{F}_{n,x} - F_{n,x}^-)(u) \, dy \, dx\biggr|.
\end{align*}
By~Lemma~\ref{Lemma:hxinvbounds}(ii) in the main text, we can find $n_1 \in \mathbb{N}$ such that for $n \geq n_1$, $k \in \{k_0^*,\ldots,k_1^*\}$, $x \in \mathcal{X}_n$ and $s \leq a_n/(n-1)$, we have $V_df(x)h_x^{-1}(s)^d \leq 2s$.  Thus, for $n \geq n_1$, $k \in \{k_0^*,\ldots,k_1^*\}$, $x \in \mathcal{X}_n$ and $y \in B_x^c(\frac{r_{n,1}d_n}{f(x)^{1/d}})$,
\[
\tilde{u}_{n,x,y} \geq \frac{24\log n}{f(x)} \geq \frac{2a_n}{f(x)e^{\Psi(k)}} \geq u_n^*(x).
\]
Thus, from~\eqref{Eq:tilde>*}, $T_{13} = O(n^{-2}\log n)$.  We conclude that for every $\epsilon > 0$, 
\[
|T_1| \leq T_{11} + T_{12} + T_{13} = o\biggl(\frac{k^{-\frac{1}{2} + \frac{2\alpha}{\alpha+d} - \epsilon}}{n^{\frac{2\alpha}{\alpha+d} - \epsilon}}\biggr).
\]

\bigskip

\noindent \emph{To bound $T_2$}: Fix $x \in \mathcal{X}_n$ and $z \in B_0(d_n)$.  Choosing $n_2 \in \mathbb{N}$ large enough that $\frac{r_{n,1}d_n}{\delta_n^{1/d}} \leq (8d^{1/2})^{-1/\beta} c_n^{-1}$ for $n \geq n_2$, we have by Lemma~\ref{Lemma:15over7} that
\[
\sup_{y \in B_x\bigl(\frac{r_{n,1}d_n}{\delta_n^{1/d}}\bigr)}\biggl|\frac{f(y)}{f(x)} - 1\biggr| \leq \frac{1}{2}
\]
for $n \geq n_2$, $k \in \{k_0^*,\ldots,k_1^*\}$.  Also, for all $n \geq n_2, k \in \{k_0^*,\ldots,k_1^*\}$, 
we have
\begin{align*}
	\bigl|f(y_{x,z})\log f(y_{x,z}) &- f(x) \log f(x) \bigr| \\
&\leq f(y_{x,z}) |\log (f(y_{x,z})/f(x))| + |\log f(x) | |f(y_{x,z})-f(x)| \\
	& \leq a(f(x))f(x) \|y_{x,z}-x\|^\beta \{ |\log f(x)| + 4\}.
\end{align*}
Moreover, by arguments used to bound $T_{11}$,
\begin{align*}
&\biggl|\int_{ \|z\|^d /f(x)}^\infty \log (uf(x)) \,d(\tilde{F}_{n,x}-F^-_{n,x})(u)\biggr| \lesssim \mathbb{E}\biggl|\log(\mathrm{B})\biggl(1 - \frac{(n-2)\mathrm{B}}{k-1}\biggr)\biggr| \\
&\hspace{10pt}+ \biggl\{\log n + |\log f(x)| + \log\biggl(1 + \frac{\|x\|}{\mu_\alpha^{1/\alpha}(f)}\biggr)\biggr\}\mathbb{E}\biggl|1- \frac{(n-2)\mathrm{B}}{k-1}\biggr|,
\end{align*}
where $\mathrm{B} \sim \mathrm{Beta}(k-1,n-k-1)$.  It follows that for every $\epsilon > 0$,
\begin{align*}
T_2 &= \frac{e^{\Psi(k)}}{V_d(n-1)}\int_{\mathcal{X}_n} \int_{B_0(d_n)} \{f(y_{x,z}) \log f(y_{x,z}) - f(x) \log f(x)\} \\
&\hspace{4cm}\int_{ \|z\|^d /f(x)}^\infty \log (uf(x)) \,d(\tilde{F}_{n,x}-F^-_{n,x})(u) \,dz \, dx \\
&= O\biggl(\frac{k^{1/2}}{n}\max\biggl\{\frac{k^{\frac{\alpha}{\alpha+d}-\epsilon}}{n^{\frac{\alpha}{\alpha+d}-\epsilon}} \, , \, \frac{k^{\beta/d}}{n^{\beta/d}}\log^{2+\beta/d} n\biggr\}\biggr).
\end{align*}

\bigskip

\noindent \emph{To bound $T_3$}: Note that by Fubini's theorem,
\begin{align*}
\int_{\mathcal{X}_n} &f(x) \log f(x) \int_{B_0(d_n)} \int_{\frac{\|z\|^d}{f(x)}}^\infty \log(uf(x)) \, d(\tilde{F}_{n,x}-F^-_{n,x})(u) \, dz \, dx \\
&= V_d \int_{\mathcal{X}_n} f(x) \log f(x) \int_0^\infty uf(x) \log(uf(x)) \, d(\tilde{F}_{n,x}-F^-_{n,x})(u) \, dx \\
&= V_d\int_{\mathcal{X}_n} f(x) \log f(x) \int_0^{u_n^*(x)} uf(x) \log(uf(x)) \, d(\tilde{F}_{n,x}-F^-_{n,x})(u) \, dx \\
& \hspace{8cm}+ O(n^{-(3-\epsilon)}),
\end{align*}
for every $\epsilon > 0$, where the order of the error term follows from the same argument used to obtain~\eqref{Eq:tilde>*} and Lemma~\ref{Lemma:hxinvbounds}(i).  Thus, for every $\epsilon > 0$,
\begin{align*}
T_3 &= \frac{k-1}{n-k-1}\int_{\mathcal{X}_n} f(x) \log f(x) \int_0^\frac{a_n}{n-1} \biggl\{\frac{V_df(x)h_x^{-1}(s)^d}{s}\log (u_{x,s}f(x)) \\
&\hspace{1cm}- \log \biggl( \frac{(n-1)s}{e^{\Psi(k)}} \biggr)\biggr\} \mathrm{B}_{k,n-k-1}(s)\biggl\{1 - \frac{(n-2)s}{k-1}\biggr\} \, ds \, dx + O(n^{-(3-\epsilon)}) \\
&= O\biggl(\frac{k^{1/2}}{n}\max\biggl\{\frac{k^{\frac{\alpha}{\alpha+d}-\epsilon}}{n^{\frac{\alpha}{\alpha+d}-\epsilon}} \, , \, \frac{k^{\beta/d}}{n^{\beta/d}}\log n\biggr\}\biggr).
\end{align*}

\subsubsection{Bounds on $U_1$ and $U_2$}
\label{Appendix:U}

\noindent \emph{To bound $U_1$}: Using Lemma~\ref{Lemma:hxinvbounds}(i) and~\eqref{Eq:LotsofTerms} in the main text as in our bounds on $T_{11}$ we have that for every $\epsilon > 0$,
\begin{align}
\label{Eq:U11}
	U_{11} &:= \biggl|\int_{\mathcal{X}_n^c} f(x) \int_0^{u_n^*(x)} \log\bigl(uf(x)\bigr) \, d(F_{n,x}^- - F_{n,x})(u) \, dx\biggr| \nonumber \\
	& \leq \int_{\mathcal{X}_n^c} f(x) \int_0^\frac{a_n}{n-1} |\log(u_{x,s}f(x)) | \mathrm{B}_{k,n-k-1}(s) \biggl| \frac{(n-1)s-k}{n-k-1} \biggr| \,ds \,dx \nonumber \\
	& \hspace{6cm}= o\biggl(\frac{k^{\frac{1}{2} + \frac{\alpha}{\alpha+d}-\epsilon}}{n^{1+\frac{\alpha}{\alpha+d}-\epsilon}}\biggr).
\end{align}
Moreover, using arguments similar to those used to bound $R_2$ in the proof of Lemma~\ref{Lemma:Bias} in the main text, for every $\epsilon > 0$,
\begin{equation}
\label{Eq:U12}
U_{12} := \biggl|\int_{\mathcal{X}} f(x) \int_{u_n^*(x)}^\infty \log\bigl(uf(x)\bigr) \, d(F_{n,x}^- - F_{n,x})(u) \, dx\biggr| = o(n^{-(3-\epsilon)}).
\end{equation}
From~\eqref{Eq:U11}, and~\eqref{Eq:U12}, we have for every $\epsilon > 0$ that
\[
|U_1| \leq U_{11} + U_{12} = o\biggl(\frac{k^{\frac{1}{2} + \frac{\alpha}{\alpha+d}-\epsilon}}{n^{1+\frac{\alpha}{\alpha+d}-\epsilon}}\biggr).
\]

\bigskip

\noindent \emph{To bound $U_2$}: By Lemma~\ref{Lemma:hxinvbounds}(ii) and letting $\mathrm{B} \sim \mathrm{Beta}(k+\beta/d,n-k-1)$, we have that for every $\epsilon > 0$,
\begin{align*}
U_{21} &:= \biggl| \int_{\mathcal{X}_n} \! \! f(x) \! \int_0^\frac{a_n}{n-1} \! \! \log\Bigl(\frac{V_d f(x)h_x^{-1}(s)^d}{s}\Bigr) \mathrm{B}_{k,n-k-1}(s) \Bigl\{\frac{(n-1)s - k}{n-k-1}\Bigr\} ds \, dx\biggr| \\
&\lesssim \frac{k^{\beta/d}}{n^{\beta/d}}\mathbb{E}\biggl(\biggl|\frac{(n-1)\mathrm{B} - k}{n-k-1}\biggr|\biggr) \int_{\mathcal{X}_n} a(f(x)) f(x)^{1-\beta/d} \, dx \\
&= O\biggl(\frac{k^{1/2}}{n}\max\biggl\{\frac{k^{\beta/d}}{n^{\beta/d}} \, , \, \frac{k^{\frac{\alpha}{\alpha+d} - \epsilon}}{n^{\frac{\alpha}{\alpha+d} - \epsilon}}\biggr\}\biggr).
\end{align*}
Moreover, we can use similar arguments to those used to bound $R_4$ in the proof of Lemma~\ref{Lemma:Bias} in the main text to show that for every $\epsilon > 0$,
\begin{align*}
U_{22} &:= \biggl|\int_{\mathcal{X}_n} f(x) \int_\frac{a_n}{n-1}^1 \log\biggl(\frac{(n-1)s}{e^{\Psi(k)}}\biggr) \mathrm{B}_{k,n-k-1}(s)\biggl\{\frac{(n-1)s - k}{n-k-1}\biggr\} \, ds \, dx\biggr| \\
	&= o(n^{-(3-\epsilon)}).
\end{align*}
We deduce that for every $\epsilon > 0$,
\[
|U_2| \leq U_{21} + U_{22} = O\biggl(\frac{k^{1/2}}{n}\max\biggl\{\frac{k^{\beta/d}}{n^{\beta/d}} \, , \, \frac{k^{\frac{\alpha}{\alpha+d} - \epsilon}}{n^{\frac{\alpha}{\alpha+d} - \epsilon}}\biggr\}\biggr).
\]

\subsubsection{Bounds on $W_1,\ldots,W_4$}
\label{Appendix:W}

\noindent \emph{To bound $W_1$}: We partition the region $([l_x,v_x]\times [l_y,v_y])^c$ into eight rectangles as follows:
\begin{align*}
\bigl([l_x,v_x]&\times [l_y,v_y]\bigr)^c \! = \! \bigl([0,l_x) \times [0,l_y)\bigr) \! \cup \! \bigl([0,l_x) \times [l_y,v_y]\bigr) \! \cup \! \bigl([0,l_x) \times (v_y,\infty)\bigr) \\
&\cup \bigl([l_x,v_x] \times [0,l_y)\bigr) \cup \bigl([l_x,v_x] \times (v_y,\infty)\bigr) \cup \bigl((v_x,\infty) \times [0,l_y)\bigr) \\
&\cup \bigl((v_x,\infty) \times [l_y,v_y]\bigr) \cup \bigl((v_x,\infty) \times (v_y,\infty)\bigr).
\end{align*}
Recall our shorthand $h(u,v) = \log(uf(x))\log(vf(y))$.  By Lemma~\ref{Lemma:hxinvbounds}(i) and the Cauchy--Schwarz inequality, as well as very similar arguments to those used to bound $R_2$ in the proof of Lemma~\ref{Lemma:Bias} in the main text, we can bound the contributions from each rectangle individually, to obtain that for every $\epsilon > 0$,
\begin{align*}
W_1 &= \int_{\mathcal{X} \times \mathcal{X}} f(x)f(y) \int_{([l_x,v_x]\times [l_y,v_y])^c} \! \! h(u,v) \, d(F_{n,x,y}-F_{n,x}F_{n,y})(u,v) \,dx \, dy \\
&= o(n^{-(9/2-\epsilon)}).
\end{align*}

\bigskip

\noindent \emph{To bound $W_2$}: We have
\[
W_2 = \int_{\mathcal{X} \times \mathcal{X}} f(x)f(y)  \int_{l_x}^{v_x} \int_{l_y}^{v_y}  h(u,v) \,d(G_{n,x,y} - F_{n,x}F_{n,y})(u,v) \, dx \, dy + \frac{1}{n}.
\]
We write $\mathrm{B}_{a,b,c} := \frac{\Gamma(a)\Gamma(b)\Gamma(c)}{\Gamma(a+b+c)}$, and, for $s, t > 0$ with $s + t < 1$, let 
\begin{equation}
\label{Eq:dirichlet}
\mathrm{B}_{a,b,c}(s,t) := \frac{s^{a-1}t^{b-1}(1-s-t)^{c-1}}{\mathrm{B}_{a,b,c}}
\end{equation}
denote the density of a $\mathrm{Dirichlet}(a,b,c)$ random vector at $(s,t)$.  For $a,b > -1$, writing $I_n:=[a_n^-/(n-1),a_n^+/(n-1)]$, let  
\begin{align*}
\mathrm{B}^{(n)}_{k+a,n-k} &:= \int_{I_n} s^{k+a-1}(1-s)^{n-k-1} \, ds, \\
\mathrm{B}^{(n)}_{k+a,n-k}(s) &:=s^{k+a-1}(1-s)^{n-k-1}/\mathrm{B}^{(n)}_{k+a,n-k}  \\
\mathrm{B}^{(n)}_{k+a,k+b,n-2k-1} &:= \int_{I_n \times I_n} s^{k+a-1}t^{k+b-1}(1-s-t)^{n-2k-2} \, ds \, dt \\
\mathrm{B}^{(n)}_{k+a,k+b,n-2k-1}(s,t) &:= s^{k+a-1}t^{k+b-1}(1-s-t)^{n-2k-2} /\mathrm{B}^{(n)}_{k+a,k+b,n-2k-1}.
\end{align*}
Then by the triangle and Pinsker's inequalities, and Beta tail bounds similar to those used previously, we have that
\begin{align}
\label{Eq:Pinsker}
&\int_{I_n \times I_n} \bigl|\mathrm{B}_{k+a,k+b,n-2k-1}(s,t) -\mathrm{B}_{k+a,n-k}(s)\mathrm{B}_{k+b,n-k}(t)\bigr| \, ds \, dt \nonumber \\
&\leq \biggl|\frac{\mathrm{B}^{(n)}_{k+a,k+b,n-2k-1}}{\mathrm{B}_{k+a,k+b,n-2k-1}}-1\biggr| + \biggl|\frac{\mathrm{B}^{(n)}_{k+a,n-k}\mathrm{B}^{(n)}_{k+b,n-k}}{\mathrm{B}_{k+a,n-k}\mathrm{B}_{k+b,n-k}}-1\biggr|\nonumber \\
&+ \biggl\{2 \! \int_{I_n \times I_n} \mathrm{B}^{(n)}_{k+a,k+b,n-2k-1}(s,t) \log\Bigl(\frac{\mathrm{B}^{(n)}_{k+a,k+b,n-2k-1}(s,t)}{\mathrm{B}^{(n)}_{k+a,n-k}(s)\mathrm{B}^{(n)}_{k+b,n-k}(t)}\Bigr) \, ds \, dt \biggr\}^{1/2} \nonumber \\
&= \biggl\{2 \! \int_0^1 \int_0^{1-t} \!\mathrm{B}_{k+a,k+b,n-2k-1}(s,t) \log\biggl(\frac{\mathrm{B}_{k+a,k+b,n-2k-1}(s,t)}{\mathrm{B}_{k+a,n-k}(s)\mathrm{B}_{k+b,n-k}(t)}\biggr) \, ds \, dt \biggr\}^{1/2} \nonumber \\	
& \hspace{10cm} + o(n^{-2}) \nonumber \\
&= 2^{1/2}\biggl[\log\Bigl(\frac{\Gamma(n+a+b-1)\Gamma(n-k)^2}{\Gamma(n-2k-1)\Gamma(n+a)\Gamma(n+b)}\Bigr) + (n-2k-2)\psi(n-2k-1) \nonumber \\
&\hspace{1.5cm}-(n-k-1)\{\psi(n+b-k-1) + \psi(n+a-k-1)\}  \nonumber \\ 
&\hspace{5cm}+n\psi(n+a+b-1)\biggr]^{1/2} + o(n^{-2}) \nonumber \\
&= \frac{k}{n}\{1+o(1)\}.
\end{align}
As a first step towards bounding $W_2$ note that
\begin{align}
\label{Eq:W21}
W_{21} &:= \int_{\mathcal{X}_n \times \mathcal{X}_n} f(x)f(y) \int_{l_x}^{v_x} \int_{l_y}^{v_y} h(u,v) \, d(G_{n,x,y} - F_{n,x}F_{n,y})(u,v) \,dx \, dy \nonumber \\
&= \int_{\mathcal{X}_n \times \mathcal{X}_n} f(x)f(y) \int_{I_n \times I_n} \log (u_{x,s}f(x))\log (u_{y,t}f(y)) \nonumber\\
&\hspace{2cm}\bigl\{\mathrm{B}_{k,k,n-2k-1}(s,t) -\mathrm{B}_{k,n-k}(s)\mathrm{B}_{k,n-k}(t)\bigr\} \, ds \, dt \,dx \, dy \nonumber \\
&= \int_{\mathcal{X}_n \times \mathcal{X}_n} f(x)f(y) \int_{I_n \times I_n} \log\biggl(\frac{(n-1)s}{e^{\Psi(k)}}\biggr)\log\biggl(\frac{(n-1)t}{e^{\Psi(k)}}\biggr) \nonumber \\
&\hspace{2cm}\bigl\{\mathrm{B}_{k,k,n-2k-1}(s,t) -\mathrm{B}_{k,n-k}(s)\mathrm{B}_{k,n-k}(t)\bigr\} \, ds \, dt \,dx \, dy + W_{211} \nonumber \\
&= -\frac{1}{n} + O\biggl(\frac{k^{\frac{\alpha}{\alpha+d}-\epsilon}}{n^{1+\frac{\alpha}{\alpha+d}-\epsilon}}\biggr) + O(n^{-2}) + W_{211},
\end{align}
for every $\epsilon > 0$.  But, by Lemma~\ref{Lemma:hxinvbounds}(ii) and~\eqref{Eq:Pinsker}, for every $\epsilon > 0$,
\begin{align}
\label{Eq:W211}
|W_{211}| &= \biggl|\int_{\mathcal{X}_n \times \mathcal{X}_n} f(x)f(y) \int_{I_n \times I_n} \biggl\{2\log\biggl(\frac{V_dh_x^{-1}(s)^df(x)}{s}\biggr)\log\biggl(\frac{(n-1)t}{e^{\Psi(k)}}\biggr) \nonumber \\
&\hspace{2.5cm}+\log\biggl(\frac{V_dh_x^{-1}(s)^df(x)}{s}\biggr)\log\biggl(\frac{V_dh_y^{-1}(t)^df(y)}{t}\biggr)\biggr\} \nonumber \\
&\hspace{2.5cm}\bigl\{\mathrm{B}_{k,k,n-2k-1}(s,t) -\mathrm{B}_{k,n-k}(s)\mathrm{B}_{k,n-k}(t)\bigr\} \, ds \, dt \,dx \, dy\biggr| \nonumber \\
&\leq 2\biggl|\int_{\mathcal{X}_n \times \mathcal{X}_n} f(x)f(y) \int_{I_n}\log\biggl(\frac{V_dh_x^{-1}(s)^df(x)}{s}\biggr) \nonumber \\
&\hspace{1cm}\Bigl[\bigl\{\log(n-1) - \Psi(n-k-1) + \log(1-s)\bigr\}\mathrm{B}_{k,n-k-1}(s) \nonumber \\
&\hspace{1cm}- \bigl\{\log(n-1) - \Psi(n)\bigr\}\mathrm{B}_{k,n-k}(s)\Bigr] \, ds \, dx \, dy\biggr| \nonumber \\
& \hspace{3cm} +O\biggl(\max\biggl\{\frac{k^{1+\frac{2\beta}{d}}}{n^{1+\frac{2\beta}{d}}} \, , \, \frac{k^{1+\frac{2\alpha}{\alpha+d}-\epsilon}}{n^{1+\frac{2\alpha}{\alpha+d}-\epsilon}}\biggr\}\biggr) \nonumber \\
&= O\biggl(\frac{k^{1/2}}{n}\max\biggl\{\frac{k^{\beta/d}}{n^{\beta/d}} \, , \, \frac{k^{\frac{\alpha}{\alpha+d} - \epsilon}}{n^{\frac{\alpha}{\alpha+d} - \epsilon}}\biggr\}\biggr).
\end{align}
Moreover, by  Lemma~\ref{Lemma:hxinvbounds}(i) and (ii) and very similar arguments, for every $\epsilon > 0$,
\begin{align}
\label{Eq:W22}
W_{22} &:= \int_{\mathcal{X}_n \times \mathcal{X}_n^c} f(x)f(y) \int_{l_x}^{v_x} \int_{l_y}^{v_y} h(u,v) \, d(G_{n,x,y} - F_{n,x}F_{n,y})(u,v) \,dx \, dy \nonumber \\
&= O\biggl(\frac{k^{1+\frac{\alpha}{\alpha+d} - \epsilon}}{n^{1+\frac{\alpha}{\alpha+d} - \epsilon}} \max\biggl\{\frac{k^{\frac{\alpha}{\alpha+d} - \epsilon}}{n^{\frac{\alpha}{\alpha+d} - \epsilon}} \, , \, \frac{k^{\beta/d}}{n^{\beta/d}} \, , \, \frac{1}{k^{1/2}}\biggr\}\biggr) \nonumber \\
W_{23} &:= \int_{\mathcal{X}_n^c \times \mathcal{X}_n^c} f(x)f(y)  \int_{l_x}^{v_x} \int_{l_y}^{v_y} h(u,v) \, d(G_{n,x,y} - F_{n,x}F_{n,y})(u,v) \,dx \, dy \nonumber \\
&= O\biggl(\frac{k^{1+\frac{2\alpha}{\alpha+d} - \epsilon}}{n^{1+\frac{2\alpha}{\alpha+d} - \epsilon}}\biggr).
\end{align}
Incorporating our restrictions on $k$, we conclude from~\eqref{Eq:W21},~\eqref{Eq:W211} and~\eqref{Eq:W22} that for every $\epsilon > 0$,
\[
|W_2| \leq \biggl|W_{21}+\frac{1}{n}\biggr| + 2|W_{22}| + |W_{23}| = O\biggl(\frac{k^{1/2}}{n}\max\biggl\{\frac{k^{\beta/d}}{n^{\beta/d}} \, , \, \frac{k^{\frac{\alpha}{\alpha+d} - \epsilon}}{n^{\frac{\alpha}{\alpha+d} - \epsilon}}\biggr\}\biggr).
\]

\bigskip

\noindent \emph{To bound $W_3$}: We write $h_u$, $h_v$ and $h_{uv}$ for the partial derivatives of $h(u,v)$ and write, for example, $(h_uF)(u,v) = h_u(u,v)F(u,v)$.  We find on integrating by parts that, writing $F=F_{n,x,y} - G_{n,x,y}$,
\begin{align}
\label{Eq:Nasty}
&\int_{[l_x,v_x] \times [l_y,v_y]} (h \,dF)(u,v) - \int_{l_x}^{v_x} \int_{l_y}^{v_y} (h_{uv}F(u,v)) \,du \,dv  \nonumber \\
&= \! \int_{l_x}^{v_x} \! \bigl[ (h_uF)(u,l_y)-(h_uF)(u,v_y) \bigr] du + \!\int_{l_y}^{v_y}\! \bigl[ (h_vF)(l_x,v)-(h_vF)(v_x,v) \bigr] dv \nonumber \\
& \hspace{2cm}+(hF)(v_x,v_y) + (hF)(l_x,l_y)-(hF)(v_x,l_y)-(hF)(l_x,v_y).
\end{align}
Using standard binomial tail bounds as used to bound $W_1$ together with~\eqref{Eq:LotsofTerms} in the main text we therefore see that for every $\epsilon > 0$,
\begin{align}
\label{Eq:byparts2}
W_{31}&:= \! \int_{\mathcal{X} \times \mathcal{X}} \! \! \! \! \! f(x)f(y) \Bigl\{\int_{l_x}^{v_x} \! \! \!\int_{l_y}^{v_y} \! (h \,dF)(u,v) \! -\! \int_{l_x}^{v_x}\! \! \! \int_{l_y}^{v_y} \!(h_{uv}F)(u,v) \, du \, dv\Bigr\} dx \, dy  \nonumber \\
	&= \!- \int_{\mathcal{X} \times \mathcal{X}}  \! \! \! \! \! f(x)f(y) \biggl\{ \int_{l_x}^{v_x}(h_uF)(u,v_y) \,du + \int_{l_y}^{v_y} (h_vF)(v_x,v) \,dv \biggr\} \,dx \,dy \nonumber \\
	& \hspace{3cm}+o(n^{-(9/2-\epsilon)}).
\end{align}
Now, uniformly for $u \in [l_x,v_x]$ and $(x,y) \in \mathcal{X} \times \mathcal{X}$ and for every $\epsilon > 0$,
\begin{align}
\label{Eq:Margin}
	F(u,v_y) &= \mathbbm{1}_{\{\|x-y\| \leq r_{n,u}\}} \binom{n-2}{k-1} p_{n,x,u}^{k-1}(1-p_{n,x,u})^{n-k-1} +o(n^{-(9/2-\epsilon)}) \nonumber \\
	& = \mathbbm{1}_{\{\|x-y\| \leq r_{n,u}\}} \frac{\mathrm{B}_{k,n-k}(p_{n,x,u})}{n-1} +o(n^{-(9/2-\epsilon)}) \nonumber \\
	& \leq \mathbbm{1}_{\{\|x-y\| \leq r_{n,v_x}\}} \frac{1}{(2 \pi k)^{1/2}} \{1+o(1)\} +o(n^{-(9/2-\epsilon)}).
\end{align}
By~\eqref{Eq:tendstozero} and the arguments leading up to it, we have
\begin{equation}
\label{Eq:DensRatio}
	\sup_{x \in \mathcal{X}_n^c} \sup_{y \in \mathcal{X}_n \cap B_x(r_{n,v_x}+r_{n,v_y})} \Bigl| \frac{f(x)}{f(y)} - 1 \Bigr| \rightarrow 0.
\end{equation}
We therefore have by~\eqref{Eq:LotsofTerms} in the main text that, for every $\epsilon>0$,
\begin{equation}
\label{Eq:W411}
	\int_{\mathcal{X}_n^c \times \mathcal{X}} f(x)f(y) \int_{l_x}^{v_x}(h_uF)(u,v_y) \,du \,dy \,dx = O \biggl( \frac{k^{-\frac{1}{2} + \frac{2 \alpha}{\alpha+d} - \epsilon}}{n^{ \frac{2 \alpha}{\alpha+d} - \epsilon}} \biggr).
\end{equation}
Now, using Lemma~\ref{Lemma:hxinvbounds}(ii), for $x \in \mathcal{X}_n$, 
\begin{equation}
\label{Eq:lxvxbounds}
	\max\{|l_xf(x)-1|,|v_xf(x)-1|\} \lesssim a(f(x)) \biggl( \frac{k}{nf(x)} \biggr)^{\beta/d} + \frac{\log^{1/2} n }{k^{1/2}}.
\end{equation}
We also need some control over $vf(y)$. By~\eqref{Eq:tendstozero} and the work leading up to it, for $n \geq \max(n_0,5), x \in \mathcal{X}_n$ and $\|y-x\|\leq r_{n,v_x}+r_{n,v_y}$, 
\[
	f(y) \geq \Bigl\{1-\frac{15d^{1/2}}{7} (c_n \rho_n)^{\beta} \Bigr\} \delta_n \geq \delta_n/2 \geq k/(n-1).
\]
Thus $a(f(y)) \leq c_n^\beta$ and using~\eqref{Eq:DensRatio} we may apply Lemma~\ref{Lemma:hxinvbounds}(ii) to the set
\[
	\mathcal{X}_n'= \mathcal{X}_n \cup \{y: \|y-x\| \leq r_{n,v_x}+r_{n,v_y} \, \, \text{for some} \, \, x \in \mathcal{X}_n \}.
\]
From this and~\eqref{Eq:DensRatio}, for any $x \in \mathcal{X}_n$ and $y \in B_x(r_{n,v_x}+r_{n,v_y})$, 
\begin{equation}
\label{Eq:lyvy}
	\max( |l_yf(y)-1|, |v_yf(y)-1|) \lesssim a(f(y)) \biggl( \frac{k}{nf(x)} \biggr)^{\beta/d} + \frac{ \log^{1/2} n }{k^{1/2}}.
\end{equation}
Using~\eqref{Eq:DensRatio} again, we have that $a(f(y_{x,z})) \lesssim f(x)^{-\epsilon}$ for each $\epsilon >0$, uniformly for $x \in \mathcal{X}_n$ and $\|z\| \leq \{v_xf(x)\}^{1/d} + \{v_yf(x)\}^{1/d}$.  From~\eqref{Eq:Margin}, \eqref{Eq:lxvxbounds} and~\eqref{Eq:lyvy} we therefore have that
\begin{align}
\label{Eq:W412}
	&\biggl| \int_{\mathcal{X}_n \times \mathcal{X}} f(x)f(y) \int_{l_x}^{v_x}(h_uF)(u,v_y) \,du \,dy \,dx \biggr| \nonumber \\
 	& \lesssim k^{-1/2} \int_{\mathcal{X}_n \times \mathcal{X}} f(x)f(y) \mathbbm{1}_{\{\|x-y\| < r_{n,v_x}\}} | \log(v_yf(y))| \log(v_x/l_x) \,dy \,dx \nonumber \\
	& = O \biggl( \max \biggl\{ \frac{k^{1/2+2\beta/d}}{n^{1+2\beta/d}} \, , \, \frac{\log n}{nk^{1/2}} \, , \, \frac{k^{\frac{1}{2}+\frac{\alpha}{\alpha+d}-\epsilon}}{n^{1+\frac{\alpha}{\alpha+d}-\epsilon}} \biggr\} \biggr)
\end{align}
for every $\epsilon>0$. By~\eqref{Eq:byparts2}, \eqref{Eq:W411} and~\eqref{Eq:W412} we therefore have that
\begin{equation}
\label{Eq:W41}
W_{31} = O \biggl( \max \biggl\{ \frac{k^{1/2+2\beta/d}}{n^{1+2\beta/d}} \, , \, \frac{\log n}{nk^{1/2}} \, , \, \frac{k^{-1/2 + \frac{2 \alpha}{\alpha+d} - \epsilon}}{n^{ \frac{2 \alpha}{\alpha+d} - \epsilon}} \biggr\} \biggr).
\end{equation}
Finally, by~\eqref{Eq:LotsofTerms} in the main text and~\eqref{Eq:DensRatio}, we have since $F=0$ when $\|x-y\| > r_{n,u}+r_{n,v}$ that
\begin{align}
\label{Eq:W42}
W_{32}:=\int_{\mathcal{X}_n^c \times \mathcal{X}} f(x)f(y) \int_{l_x}^{v_x} \int_{l_y}^{v_y} (h_{uv}F)(u,v) \,du \,dv \,dx \,dy =O\biggl( \frac{k^{\frac{2\alpha}{\alpha+d} - \epsilon}}{n^{\frac{2\alpha}{\alpha+d} - \epsilon}} \biggr).
\end{align}
Combining~\eqref{Eq:W41} and~\eqref{Eq:W42} we have that 
\[
	W_3=W_{31}+W_{32} = O \biggl( \max \biggl\{ \frac{k^{1/2+2\beta/d}}{n^{1+2\beta/d}} \, , \, \frac{\log n}{nk^{1/2}} \, , \, \frac{k^{\frac{2 \alpha}{\alpha+d} - \epsilon}}{n^{ \frac{2 \alpha}{\alpha+d} - \epsilon}} \biggr\} \biggr).
\]

\bigskip

\noindent \emph{To bound $W_4$}: Let $p_\cap := \int_{B_x(r_{n,u}) \cap B_y(r_{n,v})} f(y) \, dy$ and let $(N_1,N_2,N_3,N_4) \sim \mathrm{Multi}(n-2,p_{n,x,u} - p_\cap,p_{n,y,v} - p_{\cap},p_\cap, 1-p_{n,x,u} - p_{n,y,v} + p_\cap)$.  Further, let
\[
F_{n,x,y}^{(1)}(u,v) := \mathbb{P}(N_1+N_3 \geq k, N_2 + N_3 \geq k),
\]
so that 
\begin{align*}
	(F_{n,x,y}-&F_{n,x,y}^{(1)})(u,v) = \mathbb{P}(N_1+N_3 = k-1,N_2+N_3 \geq k)\mathbbm{1}_{\{\|x-y\| \leq r_{n,u}\}} \nonumber \\
&+ \mathbb{P}(N_2+N_3=k-1,N_1+N_3 \geq k)\mathbbm{1}_{\{\|x-y\| \leq r_{n,v}\}} \nonumber \\
&+ \mathbb{P}(N_1+N_3 = k-1,N_2+N_3=k-1)\mathbbm{1}_{\{\|x-y\| \leq r_{n,u} \wedge r_{n,v}\}}.
\end{align*}
Now $\mathbb{P}(N_1+N_3=k-1) = \binom{n-2}{k-1}p_{n,x,u}^{k-1}(1-p_{n,x,u})^{n-k-1} \leq (2 \pi k)^{-1/2}\{1+o(1)\}$ and $F_{n,x,y}(u,v)=G_{n,x,y}(u,v)$ if $\|x-y\| > r_{n,u}+r_{n,v}$, and so, by~\eqref{Eq:lxvxbounds} and~\eqref{Eq:lyvy}, we have that 
\begin{align}
\label{Eq:prenormal}
&\int_{\mathcal{X}_n \times \mathcal{X}} f(x)f(y) \int_{l_x}^{v_x} \int_{l_y}^{v_y} \frac{(F_{n,x,y}-G_{n,x,y})(u,v)}{uv} \,du \,dv \,dx \, dy \nonumber \\
&= \int_{\mathcal{X}_n \times \mathcal{X}} f(x)f(y) \int_{l_x}^{v_x} \int_{l_y}^{v_y} \frac{(F_{n,x,y}^{(1)} - G_{n,x,y})(u,v)}{uv} \,du \,dv \,dx \, dy \nonumber \\
& \hspace{50pt} + O\biggl( \max \biggl\{ \frac{\log n}{nk^{1/2}}\, , \, \frac{k^{\frac{1}{2} + \frac{2\beta}{d}}}{n^{1+\frac{2\beta}{d}}} \, , \, \frac{k^{\frac{1}{2}+ \frac{\alpha}{\alpha+d}-\epsilon}}{n^{1+\frac{\alpha}{\alpha+d}-\epsilon}} \biggr\} \biggr).
\end{align}
We can now approximate $F_{n,x,y}^{(1)}(u,v)$ by $\Phi_\Sigma(k^{1/2}\{uf(x)-1\},k^{1/2}\{vf(x)-1\})$ and $G_{n,x,y}(u,v)$ by $\Phi(k^{1/2}\{uf(x)-1\})\Phi(k^{1/2}\{vf(x)-1\})$. To avoid repetition, we focus on the former of these terms.  To this end, for $i=3,\ldots,n$, let
\[
Y_i := \begin{pmatrix} \mathbbm{1}_{\{X_i \in B_x(r_{n,u})\}} \\ \mathbbm{1}_{\{X_i \in B_y(r_{n,v})\}} \end{pmatrix},
\]
so that $\sum_{i=3}^n Y_i = \begin{pmatrix}N_1+N_3 \\ N_2 + N_3\end{pmatrix}$.  We also define
\begin{align*}
\mu &:= \mathbb{E}(Y_i) = \begin{pmatrix} p_{n,x,u} \\ p_{n,y,v} \end{pmatrix} \\
V &:= \mathrm{Cov}(Y_i) = \begin{pmatrix} p_{n,x,u}(1-p_{n,x,u}) & p_\cap-p_{n,x,u}p_{n,y,v} \\ p_\cap-p_{n,x,u}p_{n,y,v} & p_{n,y,v}(1-p_{n,y,v}) \end{pmatrix},
\end{align*}
When $x \in \mathcal{X}_n$ and $y \in B_x^\circ(r_{n,v_x}+r_{n,v_y})$ we have that, writing $\Delta$ for the symmetric difference and using~\eqref{Eq:DensRatio}, $\mathbb{P}(X_1 \in B_x(r_{n,u}) \Delta B_y(r_{n,v})) > 0$ and so $V$ is invertible. We may therefore set $Z_i := V^{-1/2} (Y_i-\mu)$.  Then by the Berry--Esseen bound of \citet{Gotze:91}, writing $\mathcal{C}$ for the set of closed, convex subsets of $\mathbb{R}^2$ and letting $Z \sim N_2(0,I)$, there exists a universal constant $C_2 > 0$ such that 
\begin{equation}
\label{Eq:berryesseen}
\sup_{C \in \mathcal{C}}\biggl|\mathbb{P}\biggl(\frac{1}{(n-2)^{1/2}}\sum_{i=3}^n Z_i \in C\biggr) - \mathbb{P}(Z \in C)\biggr| \leq \frac{C_2\mathbb{E}(\|Z_3\|^3)}{(n-2)^{1/2}}.
\end{equation}
The distribution of $Z_3$ depends on $x, y, u$ and $v$, but, recalling the substitution $y = y_{x,z}$ as defined in~\eqref{Eq:yxz} in the main text, we claim that for $x \in \mathcal{X}_n$, $y = y_{x,z} \in B_x(r_{n,u} + r_{n,v})$, $u \in [l_x,v_x]$ and $v \in [l_y,v_y]$,
\begin{equation}
\label{Eq:zbound}
\mathbb{E}(\|Z_3\|^3) \lesssim \Bigl(\frac{n}{k \|z\|}\Bigr)^{1/2}.
\end{equation}
To establish this, note that for $x \in \mathcal{X}_n$ and $\|y-x\|\leq r_{n,v_x}+r_{n,v_y}$, we have by~\eqref{Eq:DensRatio},~\eqref{Eq:lxvxbounds} and~\eqref{Eq:lyvy} that $\|y-x\| \lesssim (\frac{k}{nf(x)})^{1/d}$. Thus, for $v \in [l_y,v_y]$, and using Lemma~\ref{Lemma:15over7}, we also have that
\begin{align}
\label{Eq:lyvybounds}
	|vf(x)-1| & \leq \max( |v_yf(y)-1|, |l_yf(y)-1|) + v_y|f(y)-f(x)| \nonumber \\
	& \lesssim a(f(x)\wedge f(y)) \biggl( \frac{k}{nf(x)} \biggr)^{\beta/d} + \frac{\log^{1/2} n}{k^{1/2}}.
\end{align} 
Now, by the definition of $l_x$ and $v_x$,
\begin{equation}
\label{Eq:pbound}
	\max\bigl\{|p_{n,x,u}- k/(n-1)| \, , \, |p_{n,y,v}-k/(n-1)|\bigr\}  \leq \frac{3k^{1/2}\log^{1/2} n}{n-1}
\end{equation}
for all $x,y \in \mathcal{X}$ and $u \in [l_x,v_x], v \in [l_y,v_y]$.  Next, we bound $|\frac{n-2}{k} p_\cap - \alpha_z|$ for $x \in \mathcal{X}_n$ and $y=y_{x,z}$ with $\|z\| \leq \{v_xf(x)\}^{1/d} + \{v_yf(x)\}^{1/d}$. First suppose that $u\geq v$. We may write
\[
	B_x(r_{n,u}) \cap B_y(r_{n,v}) = \{B_x(r_{n,v}) \cap B_y(r_{n,v}) \} \cup [ \{B_x(r_{n,u}) \setminus B_x(r_{n,v})\} \cap B_y(r_{n,v}) ],
\]
where this is a disjoint union.  Writing $I_{a,b}(x) := \int_0^x \mathrm{B}_{a,b}(s) \, ds$ for the regularised incomplete beta function and recalling that $\mu_d$ denotes Lebesgue measure on $\mathbb{R}^d$, we have
\begin{align*}
\mu_d\bigl(B_x(r_{n,v}) \cap B_y(r_{n,v})\bigr) &= V_d r_{n,v}^d I_{\frac{d+1}{2},\frac{1}{2}} \biggl( 1- \frac{\|x-y\|^2}{4r_{n,v}^2} \biggr) \\
&= \frac{ve^{\Psi(k)}}{n-1} I_{\frac{d+1}{2},\frac{1}{2}}  \biggl(1- \frac{\|z\|^2}{4\{vf(x)\}^{2/d} }\biggr)
\end{align*}
and
\[
	\alpha_z =  I_{\frac{d+1}{2},\frac{1}{2}}  \biggl(1- \frac{\|z\|^2}{4}\biggr).
\]
Now,
\[
	\biggl|\frac{d}{dr}  I_{\frac{d+1}{2},\frac{1}{2}}  \biggl(1- \frac{r^2}{4}\biggr) \biggr|= \frac{(1-r^2/4)^\frac{d-1}{2}}{\mathrm{B}_{(d+1)/2,1/2}} \leq \frac{1}{\mathrm{B}_{(d+1)/2,1/2}}.
\]
Hence by the mean value inequality,
\begin{align*}
	\biggl|\mu_d\bigl(B_x(r_{n,v}) &\cap B_y(r_{n,v})\bigr) - \frac{e^{\Psi(k)} \alpha_z }{(n-1)f(x)} \biggr| \\
	& \leq \frac{e^{\Psi(k)}}{n-1} \biggl[ \frac{v\|z\||1-\{vf(x)\}^{-1/d}|}{\mathrm{B}_{(d+1)/2,1/2}} + \frac{\alpha_z}{f(x)} |1-vf(x)| \biggr].
\end{align*}
It follows that for all $x \in \mathcal{X}_n$, $y \in B_x(r_{n,v_x} + r_{n,v_y})$ and $v \in [l_y,v_y]$,
\begin{align*}
	\biggl| \int_{B_x(r_{n,v}) \cap B_y(r_{n,v})} f(w) \,dw &- \frac{e^{\Psi(k)} \alpha_z}{n-1} \biggr| \\
&\lesssim \frac{k}{n} a(f(x) \wedge f(y)) \biggl( \frac{k}{nf(x)} \biggr)^{\beta/d} + \frac{k^{1/2}\log^{1/2} n}{n}
\end{align*}
using \eqref{Eq:lyvybounds} and Lemma~\ref{Lemma:15over7}. We also have by~\eqref{Eq:pbound} that
\begin{align*}
	\int_{\{B_x(r_{n,u}) \backslash B_x(r_{n,v})\} \cap B_y(r_{n,v})} &f(w) \,dw \leq p_{n,x,u}-p_{n,x,v} \\
	& \lesssim \frac{k}{n} a(f(x) \wedge f(y)) \biggl( \frac{k}{nf(x)} \biggr)^{\beta/d} + \frac{k^{1/2}\log^{1/2} n}{n}.
\end{align*}
Thus, when $x \in \mathcal{X}_n$, $y = y_{x,z} \in B_x(r_{n,v_x} + r_{n,v_y})$, $u \in [l_x,v_x]$, $v \in [l_y,v_y]$ and $u\geq v$,
\begin{equation}
\label{Eq:pcapbound}
	\biggl| \frac{n-2}{k} p_\cap - \alpha_z \biggr| \lesssim a(f(x) \wedge f(y)) \biggl( \frac{k}{nf(x)} \biggr)^{\beta/d} + \frac{\log^{1/2} n}{k^{1/2}}.
\end{equation}
We can prove the same bound when $v>u$ similarly, using~\eqref{Eq:lxvxbounds},~\eqref{Eq:lyvybounds} and Lemma~\ref{Lemma:15over7}. We will also require a lower bound on $p_{n,x,u}+p_{n,y,v}-2p_\cap$ in the region where $B_x(r_{n,u}) \cap B_y(r_{n,v}) \neq \emptyset$, i.e., $\|z\| \leq \{uf(x)\}^{1/d}+ \{vf(x)\}^{1/d}$.  By the mean value theorem, 
\[
	1- I_{\frac{d+1}{2},\frac{1}{2}}(1- \delta^2) \geq 2^{1/2}\delta\max\biggl\{\frac{2^{-d/2}}{\mathrm{B}_{(d+1)/2,1/2}} \, , \, 1 - I_{\frac{d+1}{2},\frac{1}{2}}(1/2)\biggr\}
\]
for all $\delta \in [0,1]$. Thus, for $u \geq v$, with $v \in [l_y,v_y]$, $x \in \mathcal{X}_n$, and $y = y_{x,z}$ with $\|z\| \leq 2 \{vf(x)\}^{1/d}$, by~\eqref{Eq:lyvybounds} we have,
\begin{align*}
	\mu_d\bigl(B_x(r_{n,u}) \cap B_y(r_{n,v})^c\bigr) &\geq \mu_d\bigl(B_x(r_{n,v}) \cap B_y(r_{n,v})^c\bigr) \\
&= V_d r_{n,v}^d \biggl\{1- I_{\frac{d+1}{2},\frac{1}{2}} \biggl(1-\frac{\|x-y\|^2}{4r_{n,v}^2} \biggr) \biggr\} \gtrsim \frac{k\|z\|}{nf(x)}.
\end{align*}
When $\|z\|>2 \{vf(x)\}^{1/d}$ we simply have $\mu_d\bigl(B_x(r_{n,v}) \cap B_y(r_{n,v})^c\bigr)=V_dr_{n,v}^d$ and the same overall bound applies.  Moreover, the same lower bound for $\mu_d\bigl(B_y(r_{n,v}) \cap B_x(r_{n,u})^c\bigr)$ holds when $u < v$, $u \in [l_x,v_x]$, $x \in \mathcal{X}_n$, and $y = y_{x,z} \in B_x(r_{n,v_x}+r_{n,v_y})$.  We deduce that for all $x \in \mathcal{X}_n$, $y = y_{x,z} \in B_x(r_{n,v_x}+r_{n,v_y})$, $u \in [l_x,v_x]$ and $v \in [l_y,v_y]$,
\begin{equation}
\label{Eq:p-pcapbound}
p_{n,x,u}+p_{n,y,v}-2p_\cap \geq \max\{p_{n,x,u}-p_\cap \, , \, p_{n,y,v}-p_\cap\} \gtrsim \frac{k}{n} \|z\|.
\end{equation}
We are now in a position to bound $\mathbb{E}(\|Z_3\|^3)$ above for $x \in \mathcal{X}_n$, $y=y_{x,z} \in B_x(r_{n,v_x}+r_{n,v_y})$, $u \in [l_x,v_x]$, $v \in [l_y,v_y]$. 
We write
\begin{align}
\label{Eq:Z3}
	\mathbb{E}(\|Z_3\|^3&) = p_\cap \biggl\|V^{-1/2} \begin{pmatrix} 1-p_{n,x,u} \\ 1-p_{n,y,v} \end{pmatrix} \biggr\|^{3} +(p_{n,x,u}-p_\cap) \biggl\|V^{-1/2} \begin{pmatrix} 1-p_{n,x,u} \\ -p_{n,y,v} \end{pmatrix} \biggr\|^{3}  \nonumber \\
	& \hspace{0.1cm}+(p_{n,y,v}-p_\cap) \biggl\|V^{-1/2} \begin{pmatrix} -p_{n,x,u} \\ 1-p_{n,y,v} \end{pmatrix} \biggr\|^{3} \nonumber \\
	& \hspace{0.1cm}+(1-p_{n,x,u}-p_{n,y,v}+p_\cap) \biggl\|V^{-1/2} \begin{pmatrix} p_{n,x,u} \\ p_{n,y,v} \end{pmatrix} \biggr\|^{3},
\end{align}
and bound each of these terms in turn.  First,
\begin{align}
\label{Eq:V11}
	&p_\cap \biggl\|V^{-1/2} \begin{pmatrix} 1-p_{n,x,u} \\ 1-p_{n,y,v} \end{pmatrix} \biggr\|^{3} \nonumber \\
	&= p_\cap|V|^{-3/2} \{(1-p_{n,x,u})(1-p_{n,y,v})(p_{n,x,u}+p_{n,y,v}-2p_\cap) \}^{3/2} \nonumber \\
	&= p_\cap \Biggl\{ \frac{(1-p_{n,x,u})(1-p_{n,y,v})}{p_\cap-p_{n,x,u}p_{n,y,v} + \frac{(p_{n,x,u}-p_\cap)(p_{n,y,v}-p_\cap)}{p_{n,x,u}+p_{n,y,v}-2p_\cap}} \Biggr\}^{3/2} \nonumber \\
	&\leq p_\cap \min \biggl\{ \frac{p_{n,x,u}+p_{n,y,v}}{|V|}, \frac{1}{p_\cap - p_{n,x,u}p_{n,y,v}} \biggr\}^{3/2} \lesssim n^{1/2}/k^{1/2},
\end{align}
using~\eqref{Eq:pbound} and \eqref{Eq:pcapbound}, and where we derive the final bound from the left hand side of the minimum if $\|z\| \geq 1$ and the right hand side if $\|z\| < 1$.  Similarly,
\begin{equation}
\label{Eq:V1p}
	(p_{n,x,u}-p_\cap) \biggl\|V^{-1/2} \begin{pmatrix} 1-p_{n,x,u} \\ -p_{n,y,v} \end{pmatrix} \biggr\|^{3} \leq (p_{n,x,u}-p_\cap)p_{n,y,v}^{3/2}|V|^{-3/2} \lesssim \Bigl(\frac{n}{k\|z\|}\Bigr)^{1/2},
\end{equation}
where we have used~\eqref{Eq:p-pcapbound} for the final bound.  By symmetry, the same bound holds for the third term on the right-hand side of~\eqref{Eq:Z3}.  Finally, very similar arguments yield
\begin{equation}
\label{Eq:Vpp}
	(1-p_{n,x,u}-p_{n,y,v}+p_\cap) \biggl\|V^{-1/2} \begin{pmatrix} p_{n,x,u} \\ p_{n,y,v} \end{pmatrix} \biggr\|^{3} \lesssim (k/n)^{3/2}.
\end{equation}
Combining~\eqref{Eq:V11}, \eqref{Eq:V1p} and \eqref{Eq:Vpp} gives~\eqref{Eq:zbound}.

Writing $\mathbf{\Phi}_A( \cdot)$ for the measure associated with the $N_2(0,A)$ distribution for invertible $A$, and $\phi_A$ for the corresponding density, we have by Pinsker's inequality and a Taylor expansion of the log-determinant function that 
\begin{align*}
	&2 \sup_{C \in \mathcal{C}} |\mathbf{\Phi}_A(C) - \mathbf{\Phi}_B(C)|^2 \leq \int_{\mathbb{R}^2} \phi_A \log \frac{\phi_A}{\phi_B}  \\
	&= \frac{1}{2} \{ \log |B| - \log |A| + \tr(B^{-1}(A-B)) \} \leq \|B^{-1/2}(A-B)B^{-1/2} \|^2,
\end{align*}
provided $\|B^{-1/2}(A-B)B^{-1/2} \| \leq 1/2$. Hence
\[
	\sup_{C \in \mathcal{C}} |\mathbf{\Phi}_A(C) - \mathbf{\Phi}_B(C)| \leq \min\{1, 2 \|B^{-1/2}(A-B)B^{-1/2} \|\}.
\]
We now take $A=(n-2)V/k$, $B= \Sigma$ and use the submultiplicativity of the Frobenius norm along with~\eqref{Eq:pbound} and \eqref{Eq:pcapbound} and the fact that $\|\Sigma^{-1/2}\| = \{(1+\alpha_z)^{-1}+(1-\alpha_z)^{-1}\}^{1/2}$ to deduce that 
\begin{equation}
\label{Eq:covariancematrices}
	\sup_{C \in \mathcal{C}} |\mathbf{\Phi}_A(C) - \mathbf{\Phi}_B(C)| \lesssim \frac{1}{\|z\|} \biggl\{ a(f(x) \wedge f(y)) \biggl( \frac{k}{nf(x)} \biggr)^{\beta/d} + \frac{\log^{1/2} n}{k^{1/2}} \biggr\}
\end{equation}
for $x \in \mathcal{X}_n$, $y \in B_x^\circ(r_{n,v_x}+r_{n,v_y})$, $u \in [l_x,v_x]$, $v \in [l_y,v_y]$. Now let $u=f(x)^{-1}(1+k^{-1/2}s)$ and $v=f(x)^{-1}(1+k^{-1/2}t)$. By the mean value theorem, \eqref{Eq:lxvxbounds} and~\eqref{Eq:lyvybounds},
\begin{align}
\label{Eq:arguments}
	\biggl| \Phi_\Sigma \biggl(k^{-1/2} &\biggl\{ (n-2)\mu - \begin{pmatrix} k \\ k \end{pmatrix} \biggr\} \biggr) - \Phi_\Sigma(s,t) \biggr| \nonumber \\
	& \leq \frac{1}{(2 \pi)^{1/2}} \biggl\{\biggl|\frac{(n-2)p_{n,x,u}-k}{k^{1/2}}-s\biggr| + \biggl|\frac{(n-2)p_{n,y,v}-k}{k^{1/2}}-t\biggr|\biggr\} \nonumber \\
	& \lesssim k^{1/2} a(f(x) \wedge f(y))\biggl( \frac{k}{nf(x)} \biggr)^{\beta/d} + k^{-1/2}.
\end{align}
It follows by~\eqref{Eq:berryesseen}, \eqref{Eq:zbound}, \eqref{Eq:covariancematrices} and \eqref{Eq:arguments} that for $x \in \mathcal{X}_n$ and $y \in B_x^\circ(r_{n,v_x}+r_{n,v_y})$,
\begin{align*}
	&\sup_{u \in [l_x,v_x], v \in [l_y,v_y]} |F_{n,x,y}^{(1)}(u,v) - \Phi_\Sigma(s,t)|\\
	& \lesssim \min \biggl\{1, \frac{\log^{1/2} n}{ k^{1/2}\|z\|}+ a(f(x) \wedge (f(y)) \biggl(\frac{k }{nf(x)} \biggr)^{\beta/d} \Bigl(k^{1/2}+\frac{1}{\|z\|} \Bigr) \biggr\}.
\end{align*}
Therefore, by \eqref{Eq:lxvxbounds} and~\eqref{Eq:lyvy}, and since $f(y) \geq f(x)/2$ for $x \in \mathcal{X}_n$, $y \in B_x(r_{n,v_x}+r_{n,v_y})$ and $n \geq n_0$, we conclude that for each $\epsilon>0$ and $n \geq n_0$
\begin{align}
\label{Eq:normal}
	\biggl| \int_{\mathcal{X}_n \times \mathcal{X}}\!\!\!& f(x)f(y) \int_{l_x}^{v_x}\!\!\! \int_{l_y}^{v_y} \frac{F_{n,x}^{(1)}(u,v) - \Phi_\Sigma(s,t)}{uv} \mathbbm{1}_{\{\|x-y\|\leq r_{n,u}+r_{n,v} \}} \,du \,dv \,dy \,dx \biggr| \nonumber \\
	& \lesssim \frac{k}{n} \int_{\mathcal{X}_n} \! \! f(x) \biggl\{ \frac{\log^{1/2} n}{k^{1/2}} + a(f(x)/2) \biggl(\frac{k}{nf(x)} \biggr)^{\beta/d} \biggr\}^2 \nonumber \\
	& \hspace{2cm} \int_{B_0(3)} \sup_{u \in [l_x,v_x], v \in [l_{y_{x,z}},v_{y_{x,z}}]} \! \! \! |F_{n,x,y_{x,z}}^{(1)}(u,v) - \Phi_\Sigma(s,t)| \,dz \,dx \nonumber \\
	& = O \biggl( \frac{k}{n} \max \biggl\{ \frac{\log^{5/2} n}{k^{3/2}}\, , \, \frac{k^{\frac{1}{2}+\frac{\alpha}{\alpha+d}- \epsilon}}{n^{\frac{\alpha}{\alpha+d}- \epsilon}}\, , \, \frac{k^{-1/2+\beta/d} \log n}{n^{\beta/d}} \, , \, \frac{k^{1/2+{2\beta/d}}}{n^{2\beta/d}} \biggr\} \biggr).
\end{align}
By similar (in fact, rather simpler) means we can establish the same bound for the approximation of $G_{n,x,y}$ by $\Phi(k^{1/2}\{uf(x)-1\})\Phi(k^{1/2}\{vf(x)-1\})$.

To conclude the proof for the unweighted case, we write $\mathcal{X}_n = \mathcal{X}_n^{(1)} \cup \mathcal{X}_n^{(2)}$, where 
\[
	\mathcal{X}_n^{(1)} := \{x: f(x) \geq k^\frac{d}{2\beta} \delta_n \} \, , \quad \mathcal{X}_n^{(2)} := \{x: \delta_n \leq f(x) <k^\frac{d}{2\beta} \delta_n \},
\]
and deal with these two regions separately.  We have by Slepian's inequality that $\Phi_\Sigma(s,t) \geq \Phi(s) \Phi(t)$ for all $s$ and $t$.  Hence, recalling that $s = s_{x,u} = k^{1/2}\{uf(x)-1\}$ and $t = t_{x,v} = k^{1/2}\{vf(x)-1\}$, by~\eqref{Eq:DensRatio}, \eqref{Eq:lxvxbounds} and~\eqref{Eq:lyvybounds}, for every $\epsilon>0$, 
\begin{align}
\label{Eq:mathcalx2}
	\int_{\mathcal{X}_n^{(2)} \times \mathcal{X}}\!\!\!& f(x)f(y) \int_{l_x}^{v_x} \! \! \! \int_{l_y}^{v_y} \frac{\Phi_\Sigma(s,t) - \Phi(s) \Phi(t)}{uv} \mathbbm{1}_{\{\|x-y\|\leq r_{n,u}+r_{n,v} \}} du \,dv \,dy \,dx \nonumber \\
	& \leq \frac{e^{\Psi(k)}}{V_d(n-1)k} \int_{\mathcal{X}_n^{(2)}} \int_{\mathbb{R}^d} f(y_{x,z}) \frac{\mathbbm{1}_{\{\|x-y_{x,z}\|\leq r_{n,v_x}+r_{n,v_{y_{x,z}}} \}}}{f(x)^2 l_xl_{y_{x,z}}} \nonumber \\ 
	& \hspace{3cm} \int_{-\infty}^\infty \int_{-\infty}^\infty \{\Phi_\Sigma(s,t) - \Phi(s) \Phi(t)\} \,ds \,dt \,dz \,dx \nonumber \\
	& \lesssim \frac{1}{n} \int_{\mathcal{X}_n^{(2)}} f(x) \int_{B_0(2)} \alpha_z \, dz \,dx = o \Bigl( \frac{k^{(1+\frac{d}{2\beta})\frac{\alpha}{\alpha+d} - \epsilon}}{n^{1+\frac{\alpha}{\alpha+d} - \epsilon}} \Bigr),
\end{align}
where to obtain the final error term, we have used the fact that $\int_{B_0(2)} \alpha_z \, dz = V_d$.  By~\eqref{Eq:lxvxbounds} and~\eqref{Eq:lyvy} we have, for each $\epsilon > 0$,
\begin{align}
\label{Eq:mathcalx1}
	&\int_{\mathcal{X}_n^{(1)} \times \mathcal{X}} \!\!\! f(x)f(y) \int_{l_x}^{v_x} \!\!\! \int_{l_y}^{v_y} \frac{\Phi_\Sigma(s,t) - \Phi(s) \Phi(t)}{uv} \mathbbm{1}_{\{\|x-y\|\leq r_{n,u}+r_{n,v} \}} \,du \,dv \,dy \,dx \nonumber \\
	& \leq \frac{e^{\Psi(k)}}{V_d(n-1)k} \int_{\mathcal{X}_n^{(1)}} \int_{\mathbb{R}^d} f(y_{x,z}) \frac{\mathbbm{1}_{\{\|x-y_{x,z}\|\leq r_{n,v_x}+r_{n,v_{y_{x,z}}} \}}}{f(x)^2 l_xl_{y_{x,z}}} \alpha_z \,dz \,dx \nonumber \\
	& = \frac{e^{\Psi(k)}}{(n-1)k} \int_{\mathcal{X}_n^{(1)}} f(x) \,dx + O \biggl(\max \biggl\{\frac{\log^{1/2} n}{nk^{1/2}}\, , \, \frac{k^{\beta/d}}{n^{1+\beta/d}}\, , \, \frac{k^{\frac{\alpha}{\alpha+d} - \epsilon}}{n^{1+\frac{\alpha}{\alpha+d} - \epsilon}} \biggr\} \biggr) \nonumber \\
	& = \frac{e^{\Psi(k)}}{(n-1)k} + O \biggl( \max \biggl\{ \frac{\log^{1/2} n}{nk^{1/2}} \, , \, \frac{k^{\beta/d}}{n^{1+\beta/d}} \, , \, \frac{k^{(1+\frac{d}{2\beta})\frac{\alpha}{\alpha+d} - \epsilon}}{n^{1+\frac{\alpha}{\alpha+d} - \epsilon}}  \biggr\} \biggr).
\end{align}
By Lemma~\ref{Lemma:hxinvbounds}(ii) as for~\eqref{Eq:lyvybounds} we have, for $x \in \mathcal{X}_n^{(1)}, y \in B_x(r_{n,v_x}+r_{n,v_y})$,
\begin{equation}
\label{Eq:lxvxbounds2}
\max_{v \in \{v_x,v_y\}} |vf(x)-1- 3 k^{-1/2} \log^{1/2} n| \lesssim a(f(x) \wedge f(y)) \Bigl( \frac{k}{nf(x)} \Bigr)^{\beta/d} = o(k^{-1/2}),
\end{equation}
with similar bounds holding for $l_x$ and $l_y$. A corresponding lower bound of the same order for the left-hand side of~\eqref{Eq:mathcalx1} follows from~\eqref{Eq:lxvxbounds2} and the fact that
\[
	\int_{-2 \sqrt{\log n}}^{2 \sqrt{\log n}} \int_{-2 \sqrt{\log n}}^{2 \sqrt{\log n}} \{\Phi_\Sigma(s,t) - \Phi(s) \Phi(t)\} \,ds \,dt = \alpha_z + O(n^{-2})
\]
uniformly for $z \in \mathbb{R}^d$. It now follows from~\eqref{Eq:prenormal}, \eqref{Eq:normal}, \eqref{Eq:mathcalx2} and \eqref{Eq:mathcalx1} that for each $\epsilon>0$,
\[
	W_4= O \biggl( \max \biggl\{ \frac{\log^{5/2} n}{n k^{1/2}}\, , \, \frac{k^{\frac{3}{2}+\frac{\alpha-\epsilon}{\alpha+d}}}{n^{1+\frac{\alpha-\epsilon}{\alpha+d}}}\, , \, \frac{k^{3/2+2\beta/d}}{n^{1+2\beta/d}} \, , \, \frac{k^{(1+\frac{d}{2\beta})\frac{\alpha-\epsilon}{\alpha+d}}}{n^{1+\frac{\alpha-\epsilon}{\alpha+d}}}\, , \, \frac{k^{\frac{1}{2}+\frac{\beta}{d}} \log n}{n^{1+\frac{\beta}{d}}} \biggr\} \biggr),
\]
as required. 

\bigskip

We now turn our attention to the variance of the weighted Kozachenko--Leonenko estimator $\hat{H}_n^w$.  We first claim that 
\begin{align}
\label{eq:weighteddiagonal}
	\Var \biggl(\sum_{j=1}^k w_j \log \xi_{(j),1} \biggr) &= \sum_{j,l=1}^k w_j w_l \Cov( \log \xi_{(j),1} , \log \xi_{(l),1} ) = V(f) + o(1).
\end{align}
By~\eqref{Eq:LongDisplay},~\eqref{Eq:Sbounds} and Lemma~\ref{Lemma:Bias} in the main text, for $j$ such that $w_j \neq 0$,
\[
	\Var \log \xi_{(j),1} = V(f) + o(1)
\]
as $n \rightarrow \infty$. 
For $l >j$, using similar arguments to those used in the proof of Lemma~\ref{Lemma:Bias} in the main text, and writing $u_{x,s}^{(k)} := u_{x,s} =V_d(n-1) h_x^{-1}(s)^de^{-\Psi(k)}$ for clarity, we have
\begin{align*}
	&\mathbb{E}( \log \xi_{(j),1} \log \xi_{(l),1} )  \\
	&= \int_\mathcal{X} f(x) \int_0^1 \int_0^{1-s} \log (u_{x,s}^{(j)}) \!\log (u_{x,s+t}^{(l)}) \mathrm{B}_{j,l-j,n-l}(s,t) \, dt \,ds \,dx \\
	&\! \! = \!\!\int_\mathcal{X} \!\!\!f(x) \!\int_0^1 \!\!\int_0^{1-s}\! \! \!\! \! \! \log \Bigl(\! \frac{(n-1)s}{f(x)e^{\Psi(j)}}\! \Bigr) \! \log \Bigl(\! \frac{(n-1)(s+t)}{f(x)e^{\Psi(l)}}\! \Bigr)\! \mathrm{B}_{j,l-j,n-l}(s,t) dt\, ds\, dx \!+\!o(1) \\
	&= \!\!\int_\mathcal{X}\!\!\! f(x) \log^2 f(x) \,dx +o(1)
\end{align*}
as $n \rightarrow \infty$, uniformly for $1 \leq j < l \leq k_1^*$. Now~\eqref{eq:weighteddiagonal} follows on noting that $\sup_{k \geq k_d} \|w\| < \infty$.

Next we claim that
\begin{equation}
\label{Eq:WeightedCov}
	\Cov \biggl( \sum_{j=1}^k w_j \log \xi_{(j),1}, \sum_{l=1}^k w_l \log \xi_{(l),2} \biggr) = o(n^{-1})
\end{equation}
as $n \rightarrow \infty$. In view of~\eqref{Eq:Cov1} in the main text and the fact that $\sup_{k \geq k_d} \|w\| < \infty$, it is sufficient to show that
\[
	\Cov\bigl( \log(f(X_1) \xi_{(j),1}) , \log(f(X_2) \xi_{(l),2}) \bigr) = o(n^{-1})
\]
as $n \rightarrow \infty$, whenever $w_j,w_l \neq 0$. We suppose without loss of generality here that $j < l$, since the $j=l$ case is dealt with in~\eqref{Eq:LongerDisplay}. We broadly follow the same approach used to bound $W_1,\ldots,W_4$, though we require some new (similar) notation. Let $F_{n,x,y}'$ denote the conditional distribution function of $(\xi_{(j),1}, \xi_{(l),2})$ given $X_1=x, X_2=y$ and let $F_{n,x}^{(j)}$ denote the conditional distribution function of $\xi_{(j),1}$ given $X_1=x$. Let
\[
r_{n,u}^{(j)} := \biggl\{  \frac{ue^{\Psi(j)}}{V_d(n-1)} \biggr\}^{1/d}, \quad p_{n,x,u}^{(j)} := h_x (r_{n,u}^{(j)}).
\]
Recall the definitions of $a_{n,j}^{\pm}$ given in the proof of Lemma~\ref{Lemma:Vw}, and let $v_{x,j}:=\inf\{u \geq 0 : (n-1)p_{n,x,u}^{(j)}= a_{n,j}^+ \}$ and $l_{x,j}:=\inf\{u \geq 0 : (n-1)p_{n,x,u}^{(j)}= a_{n,j}^- \}$. For pairs $(u,v)$ with $u \leq v_{x,j}$ and $v \leq v_{y,l}$, let $(M_1,M_2,M_3) \sim \text{Multi}(n-2; p_{n,x,u}^{(j)}, p_{n,y,v}^{(l)}, 1- p_{n,x,u}^{(j)} -p_{n,y,v}^{(l)})$ and write
\[
	G_{n,x,y}'(u,v) := \mathbb{P}(M_1 \geq j, M_2 \geq l).
\]
Also write
\[
	\Sigma' := \begin{pmatrix} 1& (j/l)^{1/2}\alpha_z' \\ (j/l)^{1/2}\alpha_z'&1 \end{pmatrix},
\]
where $\alpha_z':= V_d^{-1} \mu_d\bigl(B_0(1) \cap B_z(\exp(\Psi(l)-\Psi(j))^{1/d})\bigr)$. Writing $W_i'$ for remainder terms to be bounded later, we have
\begin{align}
\label{Eq:LongerDisplay2}
	&\Cov\bigl( \log(f(X_1) \xi_{(j),1}) , \log(f(X_2) \xi_{(l),2}) \bigr) \nonumber \\
	&= \int_{\mathcal{X} \times \mathcal{X}} f(x)f(y) \int_{[l_{y,l},v_{y,l}] \times [l_{x,j},v_{x,j}]} \! \! \! \! \! \! \! \! \! \! \! \! \! \!h(u,v) \,d(F_{n,x,y}' \!-\!F_{n,x}^{(j)}F_{n,y}^{(l)})(u,v) \, dx \, dy +W_1' \nonumber \\
	&\!\!= \! \! \int_{\mathcal{X} \times \mathcal{X}} \! \! \! \! \! f(x)f(y) \! \! \int_{[l_{y,l},v_{y,l}] \times [l_{x,j},v_{x,j}]} \! \! \! \! \! \! \! \! \! \! \! \! \! h(u,v) \,d(F_{n,x,y}'\!-\!G_{n,x,y}')(u,v) \, dx \, dy - \frac{1}{n} +\sum_{i=1}^2 W_i' \nonumber \\ 
	&\!\!= \!\!\int_{\mathcal{X}_n \times \mathcal{X}} \!\!\!\!\! f(x)f(y)\!\! \int_{l_{y,l}}^{v_{y,l}}\!\! \int_{l_{x,j}}^{v_{x,j}} \frac{(F_{n,x,y}'-G_{n,x,y}')(u,v)}{uv} \,du \,dv \,dx \,dy - \frac{1}{n} +\sum_{i=1}^3 W_i'\nonumber  \\
	&\!\!= \!\!\frac{V_d^{-1}e^{\Psi(j)}}{(n-1)(jl)^{1/2}} \int_{\mathbb{R}^d} \int_{-\infty}^\infty \int_{-\infty}^\infty \{\Phi_{\Sigma'}(s,t)-\Phi(s) \Phi(t) \} \,ds \,dt \,dz - \frac{1}{n} +\sum_{i=1}^4 W_i' \nonumber \\
	&\!\!=\!\!\frac{V_d^{-1}e^{\Psi(j)}}{(n-1)l} \int_{\mathbb{R}^d} \alpha_z' \,dz - \frac{1}{n} +\sum_{i=1}^4 W_i = O\biggl( \frac{1}{nk} \biggr) +\sum_{i=1}^4 W_i'
\end{align}
as $n \rightarrow \infty$. The final equality here follows from the fact that, for Borel measurable sets $K,L \subseteq \mathbb{R}^d$,
\begin{equation}
\label{Eq:intvol}
	\int_{\mathbb{R}^d} \mu_d\bigl((K+z) \cap L\bigr) \,dz = \mu_d(K) \mu_d(L),
\end{equation}
so that $\int_{\mathbb{R}^d} \alpha_z' \,dz = V_d e^{\Psi(l)-\Psi(j)}$.

\bigskip

\emph{To bound $W_1'$}: Very similar arguments to those used to bound $W_1$ show that $W_1'=o(n^{-(9/2-\epsilon)})$ as $n \rightarrow \infty$, for every $\epsilon > 0$.

\bigskip

\emph{To bound $W_2'$}: Similar to our work used to bound $W_2$, we may show that
\begin{align*}
	\int_{\frac{a_{n,j}^-}{n-1}}^{\frac{a_{n,j}^+}{n-1}} \int_{\frac{a_{n,l}^-}{n-1}}^{\frac{a_{n,l}^+}{n-1}} |\mathrm{B}_{j+a,l+b,n-j-l-1}(s,t) -& \mathrm{B}_{j+a,n-j}(s)\mathrm{B}_{l+b,n-l}(t) | \,dt \,ds \\
	&\leq \frac{(jl)^{1/2}}{n} \{1+o(1)\}
\end{align*}
as $n \rightarrow \infty$, for fixed $a,b>-1$. Also,
\begin{align*}
	\int_0^1\!\!\! \int_0^{1-s} \!\!\!\!\!\! \log \Bigl(\! \frac{(n-1)s}{e^{\Psi(j)}} \Bigr)\! \log \Bigl(\! \frac{(n-1)t}{e^{\Psi(l)}} \Bigr)\! \{ \mathrm{B}_{j,l,n-j-l-1}\!(s,t) -& \mathrm{B}_{j,n-j}(s)\mathrm{B}_{l,n-l}(t) \} dtds\\
	 =&-\frac{1}{n} +O(n^{-2})
\end{align*}
as $n \rightarrow \infty$. Using these facts and very similar arguments to those used to bound $W_2$ we have for every $\epsilon > 0$ that
\[
	W_2' = O\biggl(\frac{k^{1/2}}{n}\max\biggl\{\frac{k^{\beta/d}}{n^{\beta/d}} \, , \, \frac{k^{\frac{\alpha}{\alpha+d} - \epsilon}}{n^{\frac{\alpha}{\alpha+d} - \epsilon}}\biggr\}\biggr).
\]

\bigskip

\emph{To bound $W_3'$}: Similarly to~\eqref{Eq:Nasty} and the surrounding work, we can show that for every $\epsilon > 0$,
\[
	W_3' = O\biggl( \max \biggl\{ \frac{\log n}{nk^{1/2}}\, , \, \frac{k^{\frac{1}{2} +\frac{2\beta}{d}}}{n^{1+ \frac{2\beta}{d}}}\, , \,\frac{k^{\frac{2\alpha}{\alpha+d} - \epsilon}}{n^{\frac{2\alpha}{\alpha+d} - \epsilon}} \biggr\} \biggr).
\]

\bigskip

\emph{To bound $W_4'$}: Let $(N_1,N_2,N_3,N_4) \sim \text{Multi}(n-2; p_{n,x,u}^{(j)} - p_\cap, p_{n,y,v}^{(l)} - p_\cap, p_\cap, 1- p_{n,x,u}^{(j)} -p_{n,y,v}^{(l)}+ p_\cap)$, where $p_\cap := \int_{B_x(r_{n,u}^{(j)}) \cap B_y(r_{n,v}^{(l)})} f(w) \,dw$. Further, let 
\[
	F_{n,x,y}^{',(1)} := \mathbb{P}(N_1+N_3 \geq j, N_2+N_3 \geq l).
\]
Then, as in~\eqref{Eq:prenormal}, we have
\begin{align*}
\int_{\mathcal{X}_n \times \mathcal{X}} &f(x)f(y) \int_{l_{x,j}}^{v_{x,j}} \int_{l_{y,l}}^{v_{y,l}} \frac{(F_{n,x,y}'-G_{n,x,y}')(u,v)}{uv} \,du \,dv \,dx \, dy  \\
&= \int_{\mathcal{X}_n \times \mathcal{X}} f(x)f(y) \int_{l_{x,j}}^{v_{x,j}} \int_{l_{y,l}}^{v_{y,l}} \frac{(F_{n,x,y}^{',(1)}-G_{n,x,y}')(u,v)}{uv} \,du \,dv \,dx \, dy  \\
& \hspace{50pt} + O\biggl( \max \biggl\{ \frac{\log n}{nk^{1/2}}\, , \, \frac{k^{\frac{1}{2} + \frac{2\beta}{d}}}{n^{1+\frac{2\beta}{d}}} \, , \, \frac{k^{\frac{1}{2}+ \frac{\alpha}{\alpha+d}-\epsilon}}{n^{1+\frac{\alpha}{\alpha+d}-\epsilon}} \biggr\} \biggr).
\end{align*}
We can now approximate $F_{n,x,y}^{',(1)}(u,v)$ by $\Phi_{\Sigma'}(j^{1/2}\{uf(x)-1\},l^{1/2}\{vf(x)-1\})$ and $G_{n,x,y}'(u,v)$ by $\Phi(j^{1/2}\{uf(x)-1\})\Phi(l^{1/2}\{vf(x)-1\})$.  This is rather similar to the corresponding approximation in the bounds on $W_4$, so we only present the main differences. First, let
\[
Y_i' := \begin{pmatrix} \mathbbm{1}_{\{X_i \in B_x(r_{n,u}^{(j)})\}} \\ \mathbbm{1}_{\{X_i \in B_y(r_{n,v}^{(l)})\}} \end{pmatrix}.
\]
We also define
\begin{align*}
\mu' := \mathbb{E}(Y_i') = \begin{pmatrix} p_{n,x,u}^{(j)} \\ p_{n,y,v}^{(l)} \end{pmatrix}
\end{align*}
and
\begin{align*}
\quad V' := \mathrm{Cov}(Y_i') = \begin{pmatrix} p_{n,x,u}^{(j)}(1-p_{n,x,u}^{(j)}) & p_\cap-p_{n,x,u}^{(j)}p_{n,y,v}^{(l)} \\ p_\cap-p_{n,x,u}^{(j)}p_{n,y,v}^{(l)} & p_{n,y,v}^{(l)}(1-p_{n,y,v}^{(l)}) \end{pmatrix},
\end{align*}
and set $Z_i' := V'^{-1/2} (Y_i'-\mu)$.  Our aim is to provide a bound on $p_\cap$. Since the function
\[
	(r,s) \mapsto \mu_d\bigl( B_0(r^{1/d}) \cap B_z(s^{1/d}) \bigr),
\]
is Lipschitz we have for $x \in \mathcal{X}_n, y=x+f(x)^{-1/d}r_{n,1}^{(j)}z \in B_x(r_{n,v_{x,j}}^{(j)}+r_{n,v_{y,l}}^{(l)}), u \in [l_{x,j}, v_{x,j}]$ and $v \in [l_{y,l}, v_{y,l}]$ that
\begin{equation}
\label{Eq:pcap2}
	\biggl| \frac{n-2}{e^{\Psi(j)}} p_\cap - \alpha_z' \biggr| \lesssim a(f(x) \wedge (f(y)) \biggl( \frac{k}{nf(x)} \biggr)^{\beta/d} + \frac{\log^{1/2} n}{k^{1/2}},
\end{equation}
using similar equations to~\eqref{Eq:lxvxbounds}, \eqref{Eq:lyvy} and~\eqref{Eq:lyvybounds}. From this and similar bounds to~\eqref{Eq:pbound}, we find that $|V'| \gtrsim k^2/n^2$ and $\|(V')^{-1/2}\| \lesssim (n/k)^{1/2}$. We therefore have
\[
	\mathbb{E} \|Z_3'\|^3 \leq \|(V')^{-1/2} \|^3 \mathbb{E}\|Y_3'-\mu'\|^3 \lesssim n^{1/2}/k^{1/2},
\]
which is as in the $l=j$ case except with the factor of $\|z\|^{-1/2}$ missing.  Note now that
\[
\limsup_{n \rightarrow \infty} \sup_{\overset{(j,l):j < l}{w_j,w_l \neq 0}} \sup_{z \in B_0(1+e^{(\Psi(l)-\Psi(j))/d})} \|(\Sigma')^{-1/2}\| < \infty.
\]
Hence, using~\eqref{Eq:pcap2}, similar bounds to~\eqref{Eq:pbound} and the same arguments as leading up to \eqref{Eq:covariancematrices}, 
\begin{equation}
\label{Eq:covariancematrices2}
	\sup_{C \in \mathcal{C}} |\mathbf{\Phi}_A(C) - \mathbf{\Phi}_B(C)| \lesssim a(f(x) \wedge f(y)) \biggl( \frac{k}{nf(x)} \biggr)^{\beta/d} + \frac{\log^{1/2} n}{k^{1/2}},
\end{equation}
where $B:=\Sigma'$ and
\[
	A:= (n-2)\begin{pmatrix} j^{-1}p_{n,x,u}^{(j)}(1-p_{n,x,u}^{(j)}) & j^{-1/2}l^{-1/2}(p_\cap-p_{n,x,u}^{(j)}p_{n,y,v}^{(l)}) \\ j^{-1/2}l^{-1/2}(p_\cap-p_{n,x,u}^{(j)}p_{n,y,v}^{(l)}) & l^{-1}p_{n,y,v}^{(l)}(1-p_{n,y,v}^{(l)}) \end{pmatrix}.
\]
Now let $u:=f(x)^{-1}(1+j^{-1/2}s)$ and $v:=f(x)^{-1}(1+l^{-1/2}t)$. Similarly to~\eqref{Eq:arguments}, we have
\begin{align*}
	\biggl| \Phi_{\Sigma'} \Bigl(\frac{(n-2)p_{n,x,u}^{(j)}-j}{j^{1/2}},&\frac{(n-2)p_{n,y,v}^{(l)}-l}{l^{1/2}} \Bigr) - \Phi_{\Sigma'}(s,t) \biggr|\! \\
&\lesssim k^{1/2}a(f(x) \wedge f(y)) \biggl( \frac{k}{nf(x)} \biggr)^{\beta/d} \!\! + k^{-1/2}.
\end{align*}
Similarly to the arguments leading up to~\eqref{Eq:normal}, it follows that
\begin{align*}
	&\biggl| \int_{\mathcal{X}_n\times \mathcal{X}}\!\!\!\!\!\! f(x)f(y)\!\! \int_{l_{x,j}}^{v_{x,j}}\!\!\! \int_{l_{y,l}}^{v_{y,l}} \frac{F_{n,x}^{',(1)}(u,v) \!-\! \Phi_{\Sigma'}(s,t)}{uv} \mathbbm{1}_{\{\|x-y\|\leq r_{n,u}^{(j)}+r_{n,v}^{(l)} \}} du \,dv \,dy \,dx \biggr| \\
	& = O \biggl( \frac{k}{n} \max \biggl\{ \frac{\log^{3/2} n}{k^{3/2}}\, , \, \frac{k^{\frac{1}{2}+\frac{\alpha}{\alpha+d}- \epsilon}}{n^{\frac{\alpha}{\alpha+d}- \epsilon}}\, , \, \frac{k^{-1/2+\beta/d} \log n}{n^{\beta/d}}\, , \, \frac{k^{1/2+{2\beta/d}}}{n^{2\beta/d}} \biggr\} \biggr),
\end{align*}
where the power on the first logarithmic factor is smaller because of the absence of the factor of the $\|z\|^{-1}$ term in~\eqref{Eq:covariancematrices2}. The remainder of the work required to bound $W_4'$ is very similar to the work done from~\eqref{Eq:mathcalx2} to~\eqref{Eq:mathcalx1}, using also~\eqref{Eq:intvol}, so is omitted. We conclude that
\[
	W_4'= O \biggl( \max \biggl\{ \frac{\log^\frac{3}{2} n}{n k^\frac{1}{2}}\, , \, \frac{k^{\frac{3}{2}+\frac{\alpha-\epsilon}{\alpha+d}}}{n^{1+\frac{\alpha-\epsilon}{\alpha+d}}}\, , \, \frac{k^{\frac{3}{2}+\frac{2\beta}{d}}}{n^{1+\frac{2\beta}{d}}} \, , \, \frac{k^{(1+\frac{d}{2\beta})\frac{\alpha-\epsilon}{\alpha+d}}}{n^{1+\frac{\alpha-\epsilon}{\alpha+d}}}\, , \, \frac{k^{\frac{1}{2}+\frac{\beta}{d}} \log n}{n^{1+\frac{\beta}{d}}} \biggr\} \biggr).
\]
The equation~\eqref{Eq:LongerDisplay2}, together the bounds on $W_1',\ldots,W_4'$ just proved, establish the claim~\eqref{Eq:WeightedCov}.  We finally conclude from~\eqref{eq:weighteddiagonal} and \eqref{Eq:WeightedCov} that
\begin{align*}
\mathrm{Var}(\hat{H}_n^w) &= \frac{1}{n}\mathrm{Var}\biggl(\sum_{j=1}^k w_j \log \xi_{(j),1}\biggr) \\
&\hspace{2.5cm}+ \biggl(1-\frac{1}{n}\biggr)\mathrm{Cov}\biggl(\sum_{j=1}^k w_j \log \xi_{(j),1} \, , \, \sum_{l=1}^k w_l \log \xi_{(l),2}\biggr) \\
&= V(f) + o(n^{-1}),
\end{align*}
as required.

\section{Proof of Theorem~\ref{Thm:LowerBound}}

\begin{proof}[Proof of Theorem~\ref{Thm:LowerBound}]
For the first part of the theorem we aim to apply Theorem 25.21 of~\citet{vanderVaart1998}, and follow the notation used there. With $\dot{\mathcal{P}} := \{ \lambda( \log f + H(f)) : \lambda \in \mathbb{R}\}$ we will first show that the entropy functional $H$ is differentiable at $f$ relative to the tangent set $\dot{\mathcal{P}}$, with efficient influence function $\tilde{\psi}_f = -\log f - H(f)$. Following Example 25.16 in~\citet{vanderVaart1998}, for $g \in \dot{\mathcal{P}}$, the paths $f_{t,g}$ defined in~\eqref{Eq:Paths} of the main text are differentiable in quadratic mean at $t=0$ with score function $g$.  Note that $\int_{\mathcal{X}} gf = 0$ and $\int_{\mathcal{X}} g^2f < \infty$ for all $g \in \dot{\mathcal{P}}$.  
It is convenient to define, for $t \geq 0$, the set $A_t :=\{x \in \mathcal{X}: 8t|g(x)| \leq 1\}$, on which we may expand $e^{-2tg}$ easily as a Taylor series.  By H\"{o}lder's inequality, for $\epsilon \in (0,1/2)$,
\begin{align*}
	\int_{A_t^c} f |\log f|  &\leq (8t)^{2(1-\epsilon)} \int_{\mathcal{X}} f |g|^{2(1-\epsilon)} |\log f| \\
	& \leq (8t)^{2(1-\epsilon)} \Bigl\{ \int_{\mathcal{X}} g^2f \Bigr\}^{1-\epsilon} \Bigl\{ \int_{\mathcal{X}} f |\log f|^{1/\epsilon} \Bigr\}^\epsilon = o(t)
\end{align*}
as $t \searrow 0$.  Moreover,
\begin{align*}
	\int_{A_t^c} f \log (1+e^{-2tg}) \leq \int_{A_t^c} ( \log 2 + 2t|g|) f \leq 16t^2 (4 \log 2 + 1) \int_{\mathcal{X}} g^2f.
\end{align*}
We also have that
\begin{align}
\label{Eq:c(t)}
	|c(t)^{-1}-1| &= \biggl|\int_{\mathcal{X}} \biggl(\frac{2}{1+e^{-2tg}} - 1 - tg\biggr)f\biggr| \nonumber \\
&\leq \int_{A_t} \biggl|\frac{e^{-2tg} - 1 + 2tg + tg(e^{-2tg}-1)}{1+e^{-2tg}}\biggr|f + \int_{A_t^c} (1 + t|g|)f \nonumber \\
&\leq \frac{16}{3}t^2 \int_{A_t} g^2f + 72t^2 \int_{A_t^c} g^2f \leq 72 t^2 \int_{\mathcal{X}} g^2 f.
\end{align}
It follows that
\begin{align*}
	& \biggl|t^{-1}\{H(f_{t,g})-H(f)\}+ \int_{\mathcal{X}} \{\log f +H(f)\}fg\biggr| \\
	&= \biggl|\frac{1}{t} \int_{\mathcal{X}} \! \Bigl\{\! \Bigl(1 - \frac{2c(t)}{1\!+\!e^{-2tg}} \Bigr) \! \log f - \frac{2c(t)}{1\!+\!e^{-2tg}} \log \Bigl( \frac{2c(t)}{1\!+\!e^{-2tg}} \Bigr)\! + tg (1+\log f )\Bigr\} f\biggr| \\
&\leq \frac{1}{t} \int_{A_t} f \Bigl| \{e^{-2tg}-1+2tg + tg(e^{-2tg}-1)\} \log f  \\
	& \hspace{4cm}- 2 \log \Bigl( \frac{2}{1+e^{-2tg}} \Bigr) + tg(1+e^{-2tg}) \Bigr| + o(1) \\
	& \leq \frac{16}{3}t \int_{\mathcal{X}} g^2 f |\log f| + 22t\int_\mathcal{X} g^2 f+o(1) \rightarrow 0.
\end{align*}
The conclusion~\eqref{Eq:LowerBound} in the main text therefore follows from \citet[][Theorem~25.21]{vanderVaart1998}.

We now establish the second part of the theorem.  First, by our previous bound on $c(t)$ in~\eqref{Eq:c(t)}, for $12t < \{\int_{\mathcal{X}} g^2 f\}^{-1/2}$ we have that
\[
	\|f_{t,g}\|_\infty \leq 2c(t) \|f\|_\infty \leq \frac{2 \|f\|_\infty}{1-72t^2 \int_{\mathcal{X}} g^2f} \leq 4 \|f\|_\infty,
\]
and $\mu_\alpha( f_{t,g}) \leq 4 \mu_\alpha(f)$. 

We now study the smoothness properties of $f_{t,g}$.  This requires some involved calculations, because we first need to understand corresponding properties of $g$.  To this end, for an $m$ times differentiable function $g:\mathbb{R}^d \rightarrow \mathbb{R}$, define
\[
	M_{g}^*(x):=\max \biggl\{ \max_{t=1, \ldots, m} \|g^{(t)}(x) \| \, , \, \sup_{y \in B_x^\circ(r_a(x))} \frac{\|g^{(m)}(y)-g^{(m)}(x) \|}{\|y-x\|^{\beta-m}} \biggr\}
\]
and
\[
D_g:=\max \biggl\{1, \sup_{\delta \in (0,\|f\|_\infty)} \frac{ \sup_{x:f(x) \geq \delta} M_{g}^*(x)}{a(\delta)^{m+1}} \biggr\}.
\]
Let $\mathcal{J}_m$ denote the set of multisets of elements $\{1,\ldots,d\}$ of cardinality at most $m$, and for $J = \{j_1,\ldots,j_s\} \in \mathcal{J}_m$, define $g_J(x) := \frac{\partial^sg}{\prod_{\ell=1}^s \partial x_\ell}(x)$.  Moreover, for $i \in \{1,\ldots,s\}$, let $\mathcal{P}_i(J)$ denote the set of partitions of $J$ into $i$ non-empty multisets.  As an illustration, if $d=2$, then 
\begin{align*}
\mathcal{J}_3 \! = \! \bigl\{&\emptyset, \{1\}, \{2\}, \{1,1\}, \{1,2\}, \{2,1\}, \{2,2\}, \\
&\{1,1,1\},\! \{1,1,2\},\!\{1,2,1\},\!\{1,2,2\},\!\{2,1,1\},\!\{2,1,2\},\!\{2,2,1\},\!\{2,2,2\}\bigr\}.
\end{align*}
Moreover, if $J = \{1,1,2\} \in \mathcal{J}_3$, then
\[
\mathcal{P}_2(J) = \Bigl\{\bigl\{\{1,1\},\{2\}\bigr\},\bigl\{\{1,2\},\{1\}\bigr\},\bigl\{\{1,2\},\{1\}\bigr\}\Bigr\}.
\]
Then, by induction, and writing $g^* := g_1 = \log f + H(f)$, it may be shown that
\[
g_J^*(x) = \sum_{i=1}^{\mathrm{card}(J)} \frac{(-1)^{i-1}(i-1)!}{f^i} \sum_{\{P_1,\ldots,P_i\} \in \mathcal{P}_i(J)} f_{P_1}\ldots f_{P_i}.
\] 
Now, the cardinality of $\mathcal{P}_i(J)$ is given by a Stirling's number of the second kind:
\[
\mathrm{card}\bigl(\mathcal{P}_i(J)\bigr) = \frac{1}{i!}\sum_{\ell=0}^i (-1)^{i-\ell}\binom{i}{\ell}\ell^{\mathrm{card}(J)} =: S\bigl(\mathrm{card}(J),i\bigr),
\] 
say.  Thus, if $\mathrm{card}(J) \leq m$, then
\begin{equation}
\label{Eq:gJ*1}
|g_J^*(x)| \leq \sum_{i=1}^{\mathrm{card}(J)} (i-1)!S\bigl(\mathrm{card}(J),i\bigr)a(f(x))^i \leq \frac{1}{2}m^{m+1}m!a(f(x))^m.
\end{equation}
Moreover, if $\|y-x\| \leq r_a(x)$ and $m \geq 1$, then
\begin{align*}
|g_J^*(y) - g_J^*(x)| &\leq \sum_{i=1}^{\mathrm{card}(J)} (i-1)! \! \! \! \sum_{\{P_1,\ldots,P_i\} \in \mathcal{P}_i(J)} \! \! \biggl\{\frac{|f_{P_1}\ldots f_{P_i}(y) - f_{P_1}\ldots f_{P_i}(x)|}{f^i(y)} \\
&\hspace{4.5cm}+ \frac{|f_{P_1}\ldots f_{P_i}(x)|}{f^i(y)}\biggl|\frac{f^i(y)}{f^i(x)} - 1\biggr|\biggr\}.
\end{align*}
Now, by Lemma~\ref{Lemma:15over7},
\[
\biggl|\frac{f^i(y)}{f^i(x)} - 1\biggr| \leq i\biggl|\frac{f(y)}{f(x)} - 1\biggr|\biggl(1 + \biggl|\frac{f(y)}{f(x)} - 1\biggr|\biggr)^{i-1} \leq \Bigl(\frac{71}{56}\Bigr)^{i-1}i\biggl|\frac{f(y)}{f(x)} - 1\biggr|.
\]
Moreover, by induction and Lemma~\ref{Lemma:15over7} again,
\[
|f_{P_1}\ldots f_{P_i}(y) - f_{P_1}\ldots f_{P_i}(x)| \leq 8d^{1/2}\Bigl\{\Bigl(\frac{71}{56}\Bigr)^i-1\Bigr\}a(f(x))^if^i(x)\|y-x\|^{\beta - m}.
\]
We deduce that (even when $m=0$),
\begin{equation}
\label{Eq:gJ*2}
|g_J^*(y) - g_J^*(x)| \leq 8d^{1/2}\Bigl(\frac{71}{41}\Bigr)^{m}m!(m+1)^{m+2}a(f(x))^{m+1}\|y-x\|^{\beta-m}.
\end{equation}
Comparing~\eqref{Eq:gJ*1} and~\eqref{Eq:gJ*2}, we see that 
\begin{equation}
\label{Eq:Dlambda}
D_{g^*} \leq 8d^{1/2}\Bigl(\frac{71}{41}\Bigr)^{m}m!(m+1)^{m+2} =:D.
\end{equation}
Now let $q(y) := (1+e^{-2ty})^{-1}$, so that $f_{t,g}(x) = 2c(t)q\bigl(g(x)\bigr)f(x)$.  Similar inductive arguments to those used above yield that when $J \in \mathcal{J}_m$ with $m \geq 1$ and $g$ is $m$ times differentiable,
\[
(q \circ g)_J(x) = \sum_{i=1}^{\mathrm{card}(J)} q^{(i)}\bigl(g(x)\bigr)\sum_{\{P_1,\ldots,P_i\} \in \mathcal{P}_i(J)} g_{P_1}\ldots g_{P_i}(x),
\]
and we now bound the derivatives of $q$.  By induction,
\[
q^{(i)}(y) = (2t)^i \sum_{\ell=1}^i (-1)^{i-\ell} \frac{a_\ell^{(i)} e^{-2t \ell y}}{(1+e^{-2ty})^{\ell+1}},
\]
where for each $i \in \mathbb{N}$, we have $a_1^{(i)} = 1$, $a_i^{(i)} = i!$ and $a_\ell^{(i)} = \ell(a_\ell^{(i-1)} + a_{\ell-1}^{(i-1)})$ for $\ell \in \{2,\ldots,i-1\}$.  Since $\max_{1 \leq \ell \leq i} a_{\ell}^{(i)} \leq (2i)^{i-1}$ (again by induction), we deduce that
\begin{equation}
\label{Eq:qi}
(1+e^{-2ty})|q^{(i)}(y)| \leq 2^{2i-1} i^i t^i.
\end{equation}
Writing $s := \mathrm{card}(J)$, it follows that
\begin{align}
\label{Eq:qgbound}
|(q \circ g)_J(x)| &\leq q\bigl(g(x)\bigr)\sum_{i=1}^{s} 2^{2i-1} i^i t^i S(s,i)a(f(x))^{i(m+1)}D_g^i \nonumber \\
&\leq q\bigl(g(x)\bigr)s^{s+1} 2^{2s-1}\max(1,t)^s B_s a(f(x))^{s(m+1)}D_g^s,
\end{align}
where $B_s := \sum_{i=1}^s S(s,i)$ denotes the $s$th Bell number.  We can now apply the multivariate Leibniz rule, so that for a multi-index $\omega = (\omega_1,\ldots,\omega_d)$ with $|\omega| \leq m$, and for $t \leq 1$ and $m \geq 1$,
\begin{align}
\label{Eq:ftgderivativesbounds}
	\biggl| \frac{\partial^{\omega}f_{t,g^*}(x)}{\partial x^{\omega}}\biggr| &= \biggl| 2c(t) \sum_{\nu : \nu \leq \omega} \binom{\omega}{\nu} \frac{\partial^\nu q\bigl(g^*(x)\bigr)}{\partial x^\nu} \frac{\partial^{\omega-\nu} f(x)}{\partial x^{\omega-\nu}} \biggr| \nonumber \\
	&\leq 2^{3m-1}m^{m+1} B_m D_{g^*}^m a(f(x))^{m^2+m} f_{t,g^*}(x).
\end{align}
Now, in order to control $\bigl| \frac{\partial^{\omega}f_{t,g^*}(y)}{\partial x^{\omega}} - \frac{\partial^{\omega}f_{t,g^*}(x)}{\partial x^{\omega}}\bigr|$, we first note that by~\eqref{Eq:gJ*2} and~\eqref{Eq:Dlambda}, we have for $\|y-x\| \leq r_a(x)$, $i \in \mathbb{N}$, $J \in \mathcal{J}_m$ with $\mathrm{card}(J) = s$ and $\{P_1,\ldots,P_i\} \in \mathcal{P}_i(J)$,
\begin{equation}
\label{Eq:gdiffs}
|g_{P_1}^*\ldots g_{P_i}^*(y) - g_{P_1}^*\ldots g_{P_i}^*(x)| \leq (2D)^ia(f(x))^{i(m+1)}\|y-x\|^{\beta-m}.
\end{equation}
Thus, by~\eqref{Eq:qi},~\eqref{Eq:gdiffs}, the mean value theorem and Lemma~\ref{Lemma:15over7}, for $t \leq 1$, $\|y-x\| \leq r_a(x)$ and $m \geq 1$,
\begin{align}
\label{Eq:qcircg}
&|(q \circ g^*)_J(y) - (q \circ g^*)_J(x)| \nonumber \\
&\leq \biggl|\sum_{i=1}^s q^{(i)}(g^*(x))\sum_{\{P_1,\ldots,P_i\} \in \mathcal{P}_i(J)} \{g_{P_1}^* \ldots g_{P_i}^*(y) - g_{P_1}^* \ldots g_{P_i}^*(x)\}\biggr| \nonumber \\
&\hspace{0.5cm}+ \biggl|\sum_{i=1}^s \{q^{(i)}(g^*(y)) - q^{(i)}(g^*(x))\} \sum_{\{P_1,\ldots,P_i\} \in \mathcal{P}_i(J)} g_{P_1}^* \ldots g_{P_i}^*(y)\biggr| \nonumber \\
&\leq  D^m q(g^*(x))a(f(x))^{m^2+m+1} \|y-x\|^{\beta-m} \nonumber \\
&\hspace{3cm}\times \frac{B_m2^{3m+5}d^{1/2}(m+1)^{m+1}(1+e^{2tg^*(x)})}{e^{2tg^*(x)} + e^{-2t|g^*(y)-g^*(x)|}} \nonumber \\
&\leq D^m q(g^*(x))a(f(x))^{m^2+m+1} \|y-x\|^{\beta-m} B_m2^{3m+5}d^{1/2}(m+1)^{m+1} \Bigl(\frac{56}{41}\Bigr)^{2t}.
\end{align}
Using the multivariate Leibnitz rule again, together with~\eqref{Eq:qgbound},~\eqref{Eq:qcircg} and Lemma~\ref{Lemma:15over7}, for $t \leq 1$, $\|y-x\| \leq r_a(x)$ and $|\omega| = m \geq 1$,
\begin{align}
\label{Eq:ftg2}
\biggl|&\frac{\partial^\omega f_{t,g^*}(y)}{\partial x^\omega} - \frac{\partial^\omega f_{t,g^*}(x)}{\partial x^\omega}\biggr| \nonumber \\
&\leq 2c(t)\sum_{\nu:\nu \leq \omega} \binom{\omega}{\nu} \biggl\{\biggl|\frac{\partial^{\omega - \nu}f(y)}{\partial y^{\omega - \nu}}\biggr|\biggl|\frac{\partial^\nu q(g^*(y))}{\partial x^\nu} - \frac{\partial^\nu q(g^*(x))}{\partial x^\nu}\biggr| \nonumber \\
&\hspace{2cm}+ \biggl|\frac{\partial^\nu q(g^*(x))}{\partial x^\nu}\biggr|\biggl|\frac{\partial^\nu f(y)}{\partial x^\nu} - \frac{\partial^\nu f(x)}{\partial x^\nu}\biggr|\biggr\} \nonumber \\
&\leq 2^{4m+9} d^{1/2}B_m(m+1)^{m+1}D^m a(f(x))^{m^2+m+1}f_{t,g^*}(x)\|y-x\|^{\beta-m} \nonumber \\
&=: C_m'D^m a(f(x))^{m^2+m+1}f_{t,g^*}(x)\|y-x\|^{\beta-m}.
\end{align}
This also holds in the case $m=0$.  Now note that if $12t< \{\int_{\mathcal{X}} (g^*)^2 f\}^{-1/2}$ we have
\[
	f(x) = \frac{1+e^{-2tg^*(x)}}{2c(t)} f_{t,g^*}(x) \geq \frac{f_{t,g^*}(x)}{4}.
\]
Finally, define the function 
\begin{equation}
\label{Eq:atilde}
	\tilde{a}(\delta):=d^{m/2} C_m' D^m a(\delta/4)^{m^2+m+1}.
\end{equation}
Then $\tilde{a} \in \mathcal{A}$ and from~\eqref{Eq:ftgderivativesbounds} and~\eqref{Eq:ftg2}, we have $M_{f_{t,g^*},\tilde{a},\beta}(x) \leq \tilde{a}(f_{t,g^*}(x))$.  We conclude that for $t < \min \bigl(1, \{144\int g^2 f\}^{-1/2} \bigr)$, we have that $f_{t,g^*} \in \mathcal{F}_{d,\theta'}$, where $\theta'=(\alpha,\beta, 4\gamma, 4 \nu, \tilde{a}) \in \Theta$. The result follows on noting that $f_{t,g_\lambda}=f_{t \lambda, g^*}$.
\end{proof}

\end{document}